\documentclass[11pt,reqno]{amsart}

\usepackage{booktabs} 
\usepackage{array} 
\usepackage{paralist} 
\usepackage{verbatim} 
\usepackage{amssymb}
\usepackage{amsthm}
\usepackage{amsmath,amsfonts,amssymb}
\usepackage{esint}
\usepackage{graphics}
\usepackage{enumerate}
\usepackage{mathtools}
\usepackage{xfrac}
\usepackage{nicefrac}
\usepackage{subcaption}
\usepackage{stmaryrd}
\usepackage[normalem]{ulem}
\usepackage{cancel}
\usepackage{enumitem}
\usepackage{fancyvrb}
\usepackage{bm}
\usepackage{dsfont} 
\usepackage{bbm} 

\allowdisplaybreaks[4]

\usepackage[usenames,dvipsnames]{xcolor}
\usepackage[colorlinks=true, pdfstartview=FitV, linkcolor=blue, citecolor=blue, urlcolor=blue]{hyperref}
\usepackage[normalem]{ulem}

\usepackage{tikz}
\usetikzlibrary{calc}
\usepackage{pgf}
\usetikzlibrary{external}
\numberwithin{equation}{section}
\numberwithin{figure}{section}

\newtheorem{theorem}{Theorem}[section]

\newtheorem{lemma}[theorem]{Lemma}

\theoremstyle{definition}
\newtheorem{definition}[theorem]{Definition}

\newtheorem{remark}[theorem]{Remark}

\usepackage{dsfont}
\usepackage{bbm}
\newcommand{\N}{\mathbb{N}}
\newcommand{\R}{\mathbb{R}}

\newcommand{\cB}{\mathcal{B}}

\newcommand{\cA}{\mathcal{A}}

\newcommand{\cE}{\mathcal{E}}

\newcommand{\cM}{\mathcal{M}}

\newcommand{\cG}{\mathcal{G}}

\newcommand{\cF}{\mathcal{F}}

\newcommand{\cR}{\mathcal{R}}

\newcommand\sfE{{\boldsymbol{\mathsf E}}}

\newcommand{\eps}{\varepsilon}

\newcommand{\data}{\textnormal{\tt data}}
\renewcommand{\rho}{\varrho}

\DeclareMathOperator{\supp}{supp}

\DeclareMathOperator{\tail}{Tail}

\DeclareMathOperator{\exc}{\sfE}
\DeclareMathOperator{\RH}{\normalfont \textsf{RH}}
\DeclareMathOperator{\TAIL}{\normalfont \textsf{Tail}}

\renewcommand{\d}{\mathrm{\,d}}

\renewcommand{\leq}{\leqslant}
\renewcommand{\geq}{\geqslant}
\renewcommand{\subset}{\subseteq}

\DeclareMathAlphabet{\mathmybb}{U}{bbold}{m}{n}
\newcommand{\indc}{{\mathmybb{1}}}

\def\Xint#1{\mathchoice
{\XXint\displaystyle\textstyle{#1}}%
{\XXint\textstyle\scriptstyle{#1}}%
{\XXint\scriptstyle\scriptscriptstyle{#1}}%
{\XXint\scriptscriptstyle\scriptscriptstyle{#1}}%
\!\int}
\def\XXint#1#2#3{{\setbox0=\hbox{$#1{#2#3}{\int}$}
\vcenter{\hbox{$#2#3$}}\kern-.5\wd0}}
\def\dashint{\Xint-}

\def\XXiint#1#2#3{\setbox0=\hbox{$#1{#2#3}{\iint}$}
    \vcenter{\hbox{$#2#3$}}\kern-0.5\wd0}

\begin{document}
\allowdisplaybreaks
\title[Gradient self-improvement for 
mixed local and nonlocal parabolic equations]{Gradient self-improvement for\\ 
mixed local and nonlocal parabolic equations}

\author{Kenta Nakamura}
\address[Kenta Nakamura]{Institute of Natural Sciences, Nihon University, Tokyo, Japan}
\email{kentanak55@gmail.com}

\keywords{Gradient higher integrability; Stopping time argument; Intrinsic Calder\'{o}n-Zygmund-type covering decomposition}
\subjclass[2020]{Primary 35B65; Secondary 35K92, 35R09, 35B45}

\begin{abstract}
We introduce a new method for studying gradient higher integrability for mixed local and nonlocal parabolic equations. More precisely, for $p>2d/(d+2)$ and $s \in (0,1)$, we prove that if the inhomogeneity $F \in L^{p(1+\sigma)}_{\mathrm{loc}}$ for some $\sigma>0$, then every weak solution satisfies $\nabla u \in L^{p(1+\eps)}_{\mathrm{loc}}$ for some $\eps>0$, together with quantitative local estimates. The proof relies on stopping time arguments and an intrinsic Calder\'{o}n-Zygmund-type covering decomposition in which the inhomogeneity and nonlocal energy contributions are treated by classical and fractional maximal estimates. 
\end{abstract}

\maketitle

\setcounter{tocdepth}{1}
\tableofcontents

\section{Introduction}

\subsection{Informal summary of results}

Our aim in this paper is to establish the gradient higher integrability of solutions to mixed local and nonlocal parabolic equations of the form
\begin{equation}\label{maineq}
\partial_tu-\nabla \cdot \mathbf{a} \left(x,t,u,\nabla u\right)\,+\mathcal{L}_{s,p}u=-\nabla \cdot \left(|F|^{p-2}F\right) \quad \mbox{in}\quad \Omega_T\,,
\end{equation}
where  $\Omega$ is a bounded open subset of $\R^d$ for $d \geq 2$ and $\Omega_T:=\Omega \times (0,T)$ for any $T \in (0,\infty)$ and $\nabla u=(\partial_{x_i}u)_{1 \leq i \leq d}$ denotes the spatial gradient of $u$, whereas the divergence of a vector field $\mathbf{f}$ is $\nabla \cdot \mathbf{f}:=\sum_{i=1}^d \partial_{x_i} f_i$ with $(f_i)$ being the entries of $\mathbf{f}$. The vector-valued function $\mathbf{a}: \Omega_T \times \R \times \R^d \to \R^d$ is assumed to satisfy~(A1) and (A2), while $\mathcal{L}_{s,p}$ is an integro-differential operator defined by
\begin{align}\label{Frac. p-Lap.}
\mathcal{L}_{s,p}u(x,t)&:=2\,\mathrm{p.v.}\int_{\R^d} \mathds{K}(x,y,t)|u(x,t)-u(y,t)|^{p-2}(u(x,t)-u(y,t))\d{y} \notag\\[1mm]
&\,=2\lim_{\eps \to 0} \int_{\R^d \setminus B_\eps(x)} \mathds{K}(x,y,t)|u(x,t)-u(y,t)|^{p-2}(u(x,t)-u(y,t))\d{y}\,,
\end{align}
where the kernel $\mathds{K}=\mathds{K}(x,y,t) : \R^d \times \R^d \times (0,T) \to [0,\infty)$ is a measurable function fulfilling the symmetry and uniform ellipticity conditions described in~(A3) and (A4). In addition, the $\R^d$-valued inhomogeneity $F$ is assumed to be locally $L^p$-integrable.

Higher integrability of the spatial gradient is one of the basic and fundamental self-improving properties in linear and nonlinear regularity theory. In the purely local setting, it is obtained from a reverse H\"{o}lder inequality combined with the Gehring lemma. Indeed, Gehring’s lemma was originally developed to study the higher integrability of the Jacobian of a quasiconformal mapping~\cite{Geh73}; see also~\cite{Str80}. In the elliptic setting, Elcrat and Meyers~\cite{ME75} proved in particular that the self-improving property holds. In the parabolic case, Giaquinta and Struwe~\cite{GiSt82} established the reverse H\"{o}lder inequalities and the higher integrability for weak solutions for the quasilinear parabolic systems. In the breakthrough work ~\cite{KiLe00}, Kinnunen and Lewis established the gradient higher integrability for weak solutions of general parabolic systems with $p$-growth. Later, Acerbi and Mingione~\cite{AM07} showed the gradient higher integrability result for nonhomogeneous, degenerate or singular parabolic $p$-Laplacian systems. Turning to the porous medium equation, Gianazza and Schwarzacher~\cite{GS19} proved the gradient higher integrability for nonnegative solutions. Recently, the gradient higher integrability for weak solutions of the doubly nonlinear parabolic equations or systems, including the Trudinger equation, was obtained in~\cite{BDKS20, BDS22}. Besides, the gradient higher integrability for weak solutions of the parabolic double phase problem was proved in~\cite{KKM23}. This approach has been developed in a wide range of elliptic and parabolic problems; see for instance~\cite{Par09, DeF20, HO21} and the references therein. On the other hand, regularity theory for nonlinear nonlocal equations has developed rapidly over the last two decades, beginning with the elliptic theory and extending to parabolic problems; we refer for instance to~\cite{Kas09, CV10, CCV11, FK13, DKP14, DKP16, Coz17, BLS18, DeFP19, Str19, Now23a, Now23b, CKW23, KW24, BDLMBS25, Zha26} and the references therein. In particular, Kuusi, Mingione and Sire~\cite{KMS15} established nonlocal self-improvement properties for solutions of a nonlocal equation with measure coefficients. Their argument  relies on a fractional version of Gehring's lemma for fractional Sobolev functions. As far as we are aware, its parabolic version has remained open for quite some years. Recently, Diening and Nowak~\cite{DN25} established the Calder\'{o}n-Zygmund estimate for the fractional $p$-Laplace equation with possibly discontinuous coefficients of VMO-type by means of pointwise estimates involving certain fractional sharp maximal functions.

Mixed local and nonlocal elliptic and parabolic equations lie at the interface of these two theories. Operators of this type arise naturally from superpositions of stochastic processes at different scales, such as classical random walks and L\'{e}vy flight; see~\cite{BDVV21, BDVV22} and the references therein. In the elliptic case, from the perspective of calculus of variations, most of the regularity estimates from local boundedness to gradient H\"{o}lder regularity were established in the seminal work by De Filippis and Mingione~\cite{DM24}; see also~\cite{GK22, BKL24}. For the parabolic case, local boundedness, Harnack estimates and related qualitative properties have been studied in this framework; see for instance~\cite{GK23, GK24, Nak22a, Nak22b}. In the linear parabolic case, Das~\cite{Das24} established H\"{o}lder continuity of the spatial gradient by means of a perturbative approach. The question considered here is of a different nature in the mixed local and nonlocal regime. What can we say about the self-improvement property for the spatial gradient of solutions, that is, $\nabla u\in L^p_{\mathrm{loc}}$ implies $\nabla u\in L^{p(1+\varepsilon)}_{\mathrm{loc}}$ for $\eps>0$, under the minimal structural assumptions above?  In fact, to the best of our knowledge, the higher integrability result for weak solutions of the mixed local and nonlocal parabolic equations under the general structural assumptions has remained open for quite some years. One of the main difficulties is that the nonlocal contribution cannot simply be treated as a ``lower-order term'' in the usual Gehring lemma. Indeed, after deriving a reverse H\"{o}lder inequality, one is left with a nonlocal energy term which exists at every stopping scale. Its dependence on the radius is different from that of the inhomogeneity, and it is not controlled by the classical parabolic maximal operator. Thus, even though the local part determines the intrinsic geometry, the nonlocal interaction still remains in the self-improvement argument in an essential way.

Nevertheless, our main result (see Theorem~\ref{t.gradhigher} below) shows that this difficulty can be overcome. Namely, it asserts roughly that, if $p>2d/(d+2)$ and $F\in L^{p(1+\sigma)}_{\mathrm{loc}}(\Omega_T;\R^d)$ for some $\sigma>0$, then every weak solution of~\eqref{maineq} satisfies $\nabla u\in L^{p(1+\eps)}_{\mathrm{loc}}(\Omega_T;\R^d)$ for some $\eps>0$ depending only on $\data$ and $\sigma$.

\subsection{Hypotheses and statement of the main result}
Before stating the main result, we give the minimal notation needed for the statements below and the precise assumptions. Throughout the paper, we work in dimension $d \geq 2$ and the Euclidean norm on $\R^d$ is denoted by $|\cdot |$. %
We denote the Lebesgue measure of a set $U \subset \R^d$ by $|U|$. Moreover, we denote volume-normalized integrals by
\[
(f)_U:=\dashint_{U}f\d{x}:=\frac{1}{|U|}\int_{U}f\d{x}\,.
\]
We require that the vector field $\mathbf{a}:\Omega_T \times \R \times \R^d \to \R^d$ satisfies the following assumptions.
\begin{enumerate}
\item[(A1)]
$\mathbf{a}$ is a Carath\`{e}odory vector-valued function, namely, it is  measurable with respect to $(x,t) \in \Omega_T$ for every $(u,\xi)  \in \R \times \R^d$ and continuous with respect to $(u,\xi)$ for almost every $(x,t) \in \Omega_T$.

\smallskip

\item[(A2)]
There exist constants $0<\nu \leq L<\infty$ such that $\mathbf{a}$ satisfies the $p$-growth and coercivity conditions
\begin{equation*}
\begin{cases}
\mathbf{a}(x,t,u,\xi) \cdot \xi \geq \nu |\xi|^p\,, \\
\left|\mathbf{a}(x,t,u,\xi)\right|\leq L |\xi|^{p-1}
\end{cases}
\end{equation*}
for a.e. $(x,t) \in \Omega_T$ and any $(u,\xi) \in \R \times \R^d$.
\end{enumerate}

We further impose that the integro-differential operator $\mathcal{L}_{s,p}$ defined by~\eqref{Frac. p-Lap.} satisfies the following assumptions:

\begin{enumerate}
\item[(A3)]
The associated kernel $\mathds{K}: \R^d \times \R^d \times (0,T) \to [0,\infty)$ is a measurable function satisfying, for every $(x,y,t) \in \R^d \times \R^d \times (0,T)$, the symmetry property
\begin{equation*}
\mathds{K}(x,y,t)=\mathds{K}(y,x,t)\,.
\end{equation*}

\smallskip

\item[(A4)]
$\mathds{K}$ satisfies the uniform ellipticity condition, for almost every $(x,y,t) \in \R^d \times \R^d \times (0,T)$, 
\begin{equation}
\frac{\Lambda^{-1}}{|x-y|^{d+sp}}\leq \mathds{K}(x,y,t) \leq \frac{\Lambda}{|x-y|^{d+sp}}
\end{equation}
with a fixed ellipticity constant $\Lambda  \in [1, \infty)$ and a fractional order $s \in (0,1)$. 
\end{enumerate}

We are now in a position to present the statement of the main result.

\begin{theorem}[Gradient higher integrability]
\label{t.gradhigher}
Let $p>\frac{2d}{d+2}$, $ s \in (0,1)$ and fix $\theta:=\frac{(1-s)p}{p-1}$. Then, there exists $\eps_0\in (0,1)$ depending only on $\data := (d,s,p,\nu, L, \Lambda) $ such that, whenever $F \in L^{p(1+\sigma)}_{\mathrm{loc}}\left(\Omega_T\,;\R^d\right)$ for some $\sigma>0$ and $u$ is a weak solution to~\eqref{maineq} in the sense of Definition~\ref{dfn u} under the assumptions~(A1)--(A4), then
\[
\nabla u \in L_{\mathrm{loc}}^{p(1+\eps_1)} (\Omega_T \,; \R^d )\,,
\]
where
\[
\eps_1:=
\left\{
\begin{aligned}
&\min \bigl\{\eps_0, \nicefrac{\sigma}{2},\,\nicefrac{\theta}{p(d+2-\theta)}\bigr\}  &\mbox{if} \quad & \theta < d+2\,, \\
&\min \bigl\{\eps_0, \,\nicefrac{\sigma}{2} \bigr\}  &\mbox{if} \quad & \theta \geq  d+2\,.
\end{aligned}
\right.
\]
More precisely, there also exist $C \in [1,\infty)$ and $R_0 \in (0,1)$, both depending only on $\data$ and $\sigma$, such that for every $\eps \in (0,\eps_1]$ and every cylinder $Q_R \subset \Omega_T$  with $R \in (0,R_0]$, if $\theta < d+2$ then
\begin{align}\label{e.gradhigher1}
\dashint_{Q_{R/2}} |\nabla u|^{p(1+\eps)} \d{x}\d{t} &\leq C\left(\dashint_{Q_R}\big[|\nabla u|^p+(|F|^p+1)^{1+\sigma}\big]\d{x}\d{t} \right)^{1+\nicefrac{\eps p}{\gamma_p}} \notag\\
&\quad  \quad \quad +C\left(\dashint_{Q_R}\cE[u]\d{x}\d{t} \right)^{\frac{d+2}{d+2-\theta}}\,,
\end{align}
whereas, in the case $\theta \geq d+2$, we have
\begin{align}\label{e.gradhigher2}
\dashint_{Q_{R/2}} |\nabla u|^{p(1+\eps)} \d{x}\d{t} &\leq C \delta^{-\nicefrac{\eps p}{\gamma_p} }\left(\dashint_{Q_R}\big[|\nabla u|^p+(|F|^p+1)^{1+\sigma}\big]\d{x}\d{t} \right)^{1+\nicefrac{\eps p}{\gamma_p}}\notag \\
&\quad  \quad \quad +C\delta \left(\dashint_{Q_R}\cE[u]\d{x}\d{t} \right)^{\frac{2(\eps p+\gamma_p)}{4+(2-p)(\theta-d-2)}}\,,
\end{align}
whenever $\delta \in (0,1]$. Here, $\cE[u]$ and $\gamma_p$ are the nonlocal energy and the scaling deficit, respectively:
\[
\cE[u](x,t):=\int_{\R^d} \frac{\big|u(y,t)-u(x,t)\big|^p}{|y-x|^{d+sp}}\d{y}
\quad \mbox{and} \quad
\gamma_p:=
\left\{
\begin{aligned}
&\quad \,\,\,2  &\mbox{if} \quad &p \geq 2\,, \\
&\tfrac{p(d+2)-2d}{2}  &\mbox{if} \quad &\tfrac{2d}{d+2}<p <2\,.
\end{aligned}
\right.
\]
\end{theorem}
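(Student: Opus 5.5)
The plan follows the Kinnunen--Lewis / Acerbi--Mingione intrinsic scheme, with one new ingredient: a fractional maximal function that absorbs the nonlocal energy at each stopping scale.

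\textbf{Step 1 (energy estimate).} Work on intrinsic cylinders $Q_{\rho}^{(\lambda)}(z_0)=B_\rho(x_0)\times(t_0-\lambda^{2-p}\rho^2,t_0+\lambda^{2-p}\rho^2)$. Test the equation with $(u-(u)_{Q})\zeta^p\eta$, Steklov-averaged in time, and use (A2)--(A4). This gives a Caccioppoli inequality. Its right-hand side contains the usual terms $\rho^{-p}|u-(u)|^p$, $\rho^{-2}|u-(u)|^2$ (weighted by $\lambda^{p-2}$) and $|F|^p$. It also contains a nonlocal tail/energy contribution. Via symmetry of $\mathds K$ and the lower bound $|x-y|\gtrsim\rho$ outside the support of $\zeta$, I expect that contribution to be bounded by $\rho^{(1-s)p}\dashint_{Q}\cE[u]$, or more precisely by $\rho^{\theta(p-1)}$-type powers times averages of $\cE[u]$. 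The exponent $\theta=\frac{(1-s)p}{p-1}$ should appear naturally when a Young inequality splits $|u(x)-u(y)|^{p-1}|u(x)-(u)|$.

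\textbf{Step 2 (gluing and reverse H\"older).} Next compare slice averages in time by testing with a cutoff in time. This yields a parabolic Poincar\'e inequality on intrinsic cylinders, with the nonlocal term again entering as $\rho^{\theta}$ times a power of the average of $\cE[u]$. In the singular range $p<2$, the condition $p>2d/(d+2)$ is used through Sobolev--Gagliardo--Nirenberg to handle the $L^2$ term; this is where the deficit $\gamma_p$ comes from. Combining with Step 1 on cylinders satisfying the intrinsic coupling $\dashint_{Q^{(\lambda)}_{2\rho}}|\nabla u|^p\le\lambda^p\le C\dashint_{Q^{(\lambda)}_\rho}|\nabla u|^p$ should give the reverse H\"older inequality
\[
\dashint_{Q_{\rho}^{(\lambda)}}|\nabla u|^p\le C\Big(\dashint_{Q_{2\rho}^{(\lambda)}}|\nabla u|^{q}\Big)^{p/q}+C\dashint_{Q_{2\rho}^{(\lambda)}}(|F|^p+1)+C\rho^{\theta}\dashint_{Q_{2\rho}^{(\lambda)}}\cE[u]
\]
for some $q<p$.

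\textbf{Step 3 (stopping time and covering).} Fix $\lambda_0^{\gamma_p/p}$ comparable to the right-hand side average over $Q_R$, restricted to $R\le R_0$. For $\lambda\ge B\lambda_0$ and each Lebesgue point of $E(\lambda)=\{|\nabla u|>\lambda\}$, run a stopping time over $\rho$. Take the first radius at which
\[
\dashint_{Q^{(\lambda)}_\rho}\Big(|\nabla u|^p+\delta^{-1}(|F|^p+1)+\rho^{\theta}\,\cE[u]\Big)=\lambda^p .
\]
Because $\rho\le R$, continuity and the choice of $B$ make this well defined. Vitali covering of the $5$-times enlarged cylinders, combined with the reverse H\"older inequality, yields the level-set estimate
\[
\int_{E(\lambda)}|\nabla u|^p\le C\lambda^{p-q}\!\int_{E(c\lambda)}|\nabla u|^{q}+C\!\int_{\{|F|^p+1>c\lambda^p\}}(|F|^p+1)+C\!\int_{\{M^{\theta}\cE[u]>c\lambda^{p}\}}\lambda^p .
\]
Here $M^{\theta}f(z)=\sup_{\rho}\rho^{\theta}\dashint_{Q_\rho^{(\lambda)}(z)} f$ is a fractional (intrinsic) maximal function.

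\textbf{Step 4 (main obstacle and conclusion).} The main obstacle is the last term in the level-set estimate. It cannot be bounded by the classical maximal operator, and because of the intrinsic scaling its radius dependence does not match that of $F$. I would use the weak-type bound for the fractional maximal operator: in the parabolic metric of homogeneous dimension $d+2$, with the $\lambda^{2-p}$ time stretch accounted for, $|\{M^\theta f>t\}|\lesssim (t^{-1}\|f\|_{L^1})^{(d+2)/(d+2-\theta)}$ for $\theta<d+2$. Multiplying the level-set estimate by $\lambda^{\eps p-1}$ and integrating over $\lambda\in(B\lambda_0,k)$ on truncations $\min\{|\nabla u|,k\}$ then shows the fractional term is integrable precisely when $p(1+\eps)<p\frac{d+2}{d+2-\theta}$. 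Thus $\eps\le\frac{\theta}{p(d+2-\theta)}$, which produces the exponent $\frac{d+2}{d+2-\theta}$ in \eqref{e.gradhigher1}. When $\theta\ge d+2$, one instead uses the trivial bound $\rho^\theta\le R^{\theta-d-2}\rho^{d+2}$ together with Young's inequality with parameter $\delta$; the interplay with the intrinsic time scaling $\lambda^{2-p}$ gives the exponent $\frac{2(\eps p+\gamma_p)}{4+(2-p)(\theta-d-2)}$ in \eqref{e.gradhigher2}. Finally, choose $\eps_0$ small so that $C\eps B^{\eps p}$ absorbs the first term on the right-hand side into the left, pass $k\to\infty$, and use $\lambda_0^{p(1+\eps p/\gamma_p)}$ scaling. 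This yields the stated estimates; the restriction $\eps\le\sigma/2$ is used to control the $F$ term.
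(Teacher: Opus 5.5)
Your overall scheme matches the paper's: intrinsic stopping and Vitali covering, the reverse H\"older inequality on cylinders where $F$ and the nonlocal energy are small, a weak-type bound for an intrinsic fractional maximal operator of order $\theta$ on the cylinders where $\rho^\theta\dashint\cE[u]$ is large, and, for $\theta\ge d+2$, the fact that this quantity is automatically below threshold.

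Some of your deviations are legitimate. Bounding the off-diagonal Caccioppoli term $|u(x)-u(y)|^{p-1}|u(x)-k|$ by H\"older in $y$ and Young gives $(R-\rho)^{-p}|u-k|^p+C(R-\rho)^{\theta}\cE[u](x)$ directly. The power is $\theta$, not $(1-s)p$ as you first guess. This avoids the paper's tail term and its $p$-tail-controlled inequality, and therefore the remainder $\rho^{\theta p}\lambda^p$, which is the only reason the paper needs $R\le R_0$. Handling $F$ through level sets of $|F|^p+1$ instead of the Hardy--Littlewood maximal function is also fine.

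Your combined stopping works, with two caveats:
\begin{itemize}
\item $\rho^\theta\cE[u]$ is unweighted in your stopping quantity, so the reverse H\"older error term on $Q_{2\rho}$ is only $\le\lambda^p$, not small. You still need the paper's good/bad classification, made on the doubled cylinder.
\item $\lambda_0$ must now dominate the nonlocal term at the largest admissible radius even when $\theta<d+2$. This puts an extra power of $\dashint_{Q_R}\cE[u]$ into the final bound. That power turns out not to exceed $\frac{d+2}{d+2-\theta}$ for $\eps\le\eps_1$, so it is harmless, but the paper avoids the check by stopping on $|\nabla u|^p+|F|^p$ alone.
\end{itemize}

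Step 4, however, is wrong as written.

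\emph{The weak-type bound.} Since $|Q^\lambda_r|=2\omega_d\lambda^{2-p}r^{d+2}$, the bound reads $|\{\cM_{\theta,R_\lambda}[\varphi]>a\}|\le C\lambda^{(p-2)(q-1)}a^{-q}\|\varphi\|_{L^1}^q$ with $q=\frac{d+2}{d+2-\theta}$. The $\lambda$-factor cannot be dropped. With $a\sim\eta\lambda^p$, the bad-tail contribution is $\lambda^p|\{\cdots\}|\lesssim\lambda^{-2(q-1)}\|\cE[u]\|_{L^1(Q_{r_2})}^q$. Hence $\int^\infty\lambda^{\eps p-1}(\cdots)\,d\lambda$ converges if and only if $\eps p<\frac{2\theta}{d+2-\theta}$. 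Your criterion $1+\eps<q$ comes from ignoring the factor and is too generous when $p>2$. The threshold you then state does not follow from it. The theorem's $\eps\le\frac{\theta}{p(d+2-\theta)}$ is half the correct bound, chosen so that $\frac{\eps p}{2\theta/(d+2-\theta)-\eps p}\le 1$.

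\emph{The case $\theta\ge d+2$} (which forces $p<2$). The bound $\rho^\theta\le R^{\theta-d-2}\rho^{d+2}$ only yields $\rho^\theta\dashint_{Q^\lambda_\rho}\cE[u]\lesssim\lambda^{p-2}R^{\theta-d-2}\int\cE[u]$. That gives $\lambda_0\sim(\int\cE[u])^{1/2}$ and the exponent $\frac{\eps p+\gamma_p}{2}$, not the one claimed. You must use that the stopping radii are at most $R_\lambda\sim(r_2-r_1)\lambda^{(p-2)/2}$. This produces the extra factor $\lambda^{-(2-p)(\theta-d-2)/2}$ and hence $\lambda_0\gtrsim(\cdots)^{2/(4+(2-p)(\theta-d-2))}$.

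\emph{The stopping range.} This $\lambda$-dependent cap is also needed just to keep $Q^\lambda_\rho(z)\subset Q_{r_2}$ when $p<2$; your ``$\rho\le R$'' is not enough. It is where $\gamma_p$ actually comes from, not the Gagliardo--Nirenberg step.

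\emph{The final absorption.} The level-set estimate has its right side on a larger cylinder, so you cannot absorb directly. You need nested radii $r_1<r_2$, $\lambda_0$ depending on $r_2-r_1$, and an iteration lemma.
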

\begin{remark}
Observe that
\[
\theta=\frac{(1-s)p}{p-1} \geq d+2 \quad \mbox{and} \quad p >\frac{2d}{d+2} \quad \Longrightarrow \quad d=2,3\,,
\]
so, this case can occur only in dimensions $d=2,3$. Also, as we will explain later in Section~\ref{Sect.5.3},~\eqref{e.gradhigher2} is only valid for $\frac{2d}{d+2}<p <2$.
\end{remark}

\subsection{An overview of the approach}
Let us describe an overview of the approach. The first step is to derive the gluing lemma which controls the oscillation in time of spatial averages of $u$ on a ball. Besides the usual local quantities involving $\nabla u$ and $F$, the resulting estimate contains the nonlocal energy
\[
\mathcal E[u](x,t):=\int_{\R^d} \frac{|u(y,t)-u(x,t)|^p}{|x-y|^{d+sp}}\d{y}\,.
\]
The radius in the gluing lemma is selected independently of the two instants. This allows us to derive a family of parabolic Sobolev-Poincar\'{e} inequalities adapted to the intrinsic geometry of the problem (Section~\ref{Sect.3}). Note that most of these estimates are actually valid for the full range $p>1$. The restriction $p> 2d/ (d+2)$ arises only when the $L^2$-excess has to be controlled through the parabolic Sobolev-Poincar\'{e} inequalities, see Lemma~\ref{SP3}. This threshold arises from the parabolic Sobolev embedding (or Gagliardo-Nirenberg inequality) rather than by the nonlocal term itself. It is well-known that combining the Caccioppoli inequality with the Sobolev-Poincar\'{e} inequality yields a reverse H\"{o}lder inequality for $\nabla u$. Unlike its purely local counterpart, however, the Caccioppoli inequality still contains a nonlocal tail. We therefore prove a ``$p$-tail''-controlled inequality (Lemma~\ref{t.p-tail-controlled inequality}), which reduces this nonlocal tail to the intrinsic energy and a scale-dependent nonlinear energy. 
\smallskip

The main argument for self-improvement properties is carried out by a careful inspection of local and nonlocal effects. Controlling the nonlocal quantity is a pivotal ingredient and cannot be accomplished by the stopping time argument alone. Instead, we employ an intrinsic Calder\'{o}n-Zygmund-type covering decomposition. The underlying Calder\'{o}n-Zygmund strategy goes back to Caffarelli and Peral in their seminal work~\cite{CP98}. Later, Mingione~\cite{Min07} devised a new framework to implement this strategy in the nonlinear elliptic setting. See also Armstrong and Daniel~\cite{AD16} in the homogenization setting and the references therein. Given a super-level set of $|\nabla u|$, we associate with each point a maximal intrinsic cylinder and split the resulting Vitali family into three classes: $\cG$, $\cB_{\tail}$ and $\cB_{\mathrm{inhom}}$. On the good cylinders $\cG$, both the inhomogeneity and the nonlocal energy are small compared with the intrinsic scaling. The reverse H\"{o}lder inequality then provides the usual gain from the lower exponent $\kappa p$. The bad inhomogeneity cylinders $\cB_{\mathrm{inhom}}$ are controlled by the classical parabolic maximal operator. The technically new part is condensed in the bad tail cylinders $\cB_{\tail}$. When $\theta<d+2$, the nonlocal quantity
\[
r^\theta\dashint_{Q_r^\lambda}\cE[u](x,t)\d{x}\d{t}
\]
has exactly the scaling of a fractional maximal operator of order $\theta$ in the parabolic dimension $d+2$. We use its weak-type estimate (Lemma~\ref{t.weak L1-estimate}) to control the ``bad tail'' family $\cB_{\tail}$. The exponent $q:=\frac{d+2}{d+2-\theta}$ then naturally appears in the quantitative gradient estimate.
\smallskip

On the other hand, when $\theta\geq d+2$, a fractional maximal estimate of the preceding type is no longer available. Instead, the intrinsic scaling itself becomes sufficiently strong: after a suitable choice of the initial level $\lambda_0$, the nonlocal energy is automatically below the stopping threshold on every selected cylinder, eventually leading to $\cB_{\tail}=\varnothing$. Thus, the threshold $\theta=d+2$ essentially  changes the scheme of the proof, that is, when $\theta <d+2$ the nonlocal term is controlled through fractional maximal estimates, while for $\theta \geq d+2$ the bad-tail alternative is ruled out directly by careful inspection using the intrinsic scaling. So, once the super-level estimate is obtained, the remaining argument is classical and fundamental. Indeed, integrating the estimate with respect to the level parameter, using Fubini’s theorem combined with Cavalieri's principle and rearranging, now yields the desired gradient higher integrability.
\smallskip

We emphasize that our argument has to handle the local and nonlocal terms simultaneously, so the resulting decomposition separates the two relevant technical complications in the classical self-improvement scheme and treats them using maximal operators of different orders. To the best of our knowledge, this viewpoint is new and we expect it to be useful in other mixed or nonlocal parabolic problems.

\subsection{Intrinsic geometry}
We now briefly discuss an informal motivation for the intrinsic geometry used throughout the paper. The so-called ``intrinsic geometry'' was originally developed by DiBenedetto and Friedman~\cite{DF85} in the context of the parabolic $p$-Laplace equation; see also~\cite{DiB93}. 

Let $Q_{\rho,\vartheta}:=B_\rho \times (-\vartheta, \vartheta) \subset \R^d \times \R$. For this, consider the model case:
\begin{equation}\label{e.model}
\partial_tu-\nabla \cdot \left(|\nabla u|^{p-2}\nabla u\right)+(-\Delta)_p^su=-\nabla \cdot \left(|F|^{p-2}F\right) \quad \mbox{in} \quad Q_{\rho,\vartheta}\,.
\end{equation}
For a solution $u$ of~\eqref{e.model}, consider the scaled solution $v(y,\tau):=u(\rho y, \vartheta \tau )$ for $(y,\tau) \in B_1 \times (-1,1)$. Likewise, set $G(y,\tau):=\rho F(\rho y, \vartheta \tau)$. Infinitesimally, we can regard $|\nabla u|$ and $|F|$ as
\[
|\nabla u| \simeq \left(\dashint_{Q_{\rho,\vartheta}}|\nabla u|^p\d{x}\d{t}\right)^{\frac{1}{p}}\quad \mbox{and} \quad |F| \simeq \left(\dashint_{Q_{\rho,\vartheta}}|F|^p\d{x}\d{t}\right)^{\frac{1}{p}}\,,
\]
where the symbol $\simeq$ is used only at the level of infinitesimal scales. At this infinitesimal level, we are formally led to 
\begin{align*}
&\nabla \cdot \left(|\nabla u|^{p-2}\nabla u\right) \simeq \left(\dashint_{Q_{\rho,\vartheta}}|\nabla u|^p\d{x}\d{t}\right)^{\frac{p-2}{p}} \Delta u, \quad \mbox{and} \quad \\
& (-\Delta)^s_p u \simeq \left(\dashint_{Q_{\rho,\vartheta}}|\nabla u|^p\d{x}\d{t}\right)^{\frac{p-2}{p}} \rho^{(1-s)(p-2)} (-\Delta)^su\,.
\end{align*}
Thus, at this formal level, we obtain that
\begin{align*}
\partial_\tau v&=\vartheta \cdot \partial_t u =\vartheta \Bigl[\nabla \cdot \left(|\nabla u|^{p-2}\nabla u\right) -  (-\Delta)^s_p u - \nabla \cdot \left(|F|^{p-2}F\right)\Bigr]\\
&\simeq \vartheta \left[ \left(\dashint_{Q_{\rho,\vartheta}}|\nabla u|^p\d{x}\d{t}\right)^{\frac{p-2}{p}}\left(\frac{\Delta_y v}{\rho^2}-\rho^{(1-s)(p-2)}\cdot \frac{(-\Delta_y)^sv}{\rho^{2s}}\right) \right.\\
&\left. \quad \quad \quad \quad \quad \quad - \left(\dashint_{Q_{\rho,\vartheta}}|F|^p\d{x}\d{t}\right)^{\frac{p-2}{p}} \frac{ \nabla_y \cdot G}{\rho^2} \right]\\
&\simeq \frac{\vartheta}{\rho^2}\left(\dashint_{Q_{\rho, \vartheta}}\bigl(|\nabla u|^p+|F|^p \bigr)\d{x}\d{t}\right)^{\frac{p-2}{p}}\Bigl[\Delta_y v-\rho^{(1-s)p}(-\Delta_y)^sv-\nabla_y \cdot G \Bigr]\,.
\end{align*}
To make the equation linear in its own geometry, we select $\vartheta$ so that
\begin{equation*}
\frac{\vartheta}{\rho^2}\left(\dashint_{Q_{\rho, \vartheta}}\bigl(|\nabla u|^p+|F|^p \bigr)\d{x}\d{t}\right)^{\frac{p-2}{p}} =1 
\quad \iff \quad \vartheta= \underbrace{ \left(\dashint_{Q_{\rho,\vartheta}}\bigl(|\nabla u|^p+|F|^p \bigr)\d{x}\d{t}\right)^{\frac{2-p}{p}}}_{\simeq \lambda^{2-p}}\rho^2\,.
\end{equation*}
In this way, infinitesimally, the nonlinear equation becomes a linear equation in its own geometry, while the local part dictates the intrinsic geometry. Therefore, we adopt our intrinsic geometry as
\[
Q_{\rho}^\lambda:=B_\rho \times \left(-\lambda^{2-p}\rho^2, \lambda^{2-p} \rho^2 \right)\quad \mbox{with}  \quad \lambda \simeq \left(\dashint_{Q_{\rho}^\lambda}\bigl(|\nabla u|^p+|F|^p \bigr)\d{x}\d{t}\right)^{\nicefrac{1}{p}}\,.
\]
For the details, see Definition~\ref{t.intrinsic} below.
\smallskip

\subsection{Outline of the paper}
In Section~\ref{Sect.2}, we summarize general notation, functional spaces and preliminary results used in the paper. Section~\ref{Sect.3} is devoted to the parabolic Sobolev-Poincar\'{e} inequalities; in particular, Section~\ref{Sect.3.1} contains the key estimate used in Section~\ref{Sect.3.2}.
In Section~\ref{Sect.4}, we prove that the spatial gradient of a weak solution satisfies a reverse
H\"{o}lder inequality. The proof of Theorem~\ref{t.gradhigher} is given in Section~\ref{Sect.5}, which is the analytic core of the paper. For the convenience of the reader, we split this section into four subsections: In Section~\ref{Sect.5.1} we set up preliminary materials. Section~\ref{Sect.5.2} is devoted to the stopping time argument, whose scope is the local term. In Section~\ref{Sect.5.3}, in order to control the nonlocal effect, we demonstrate an intrinsic Calder\'{o}n-Zygmund-type covering decomposition using the classical and fractional maximal operators. Finally, Section~\ref{Sect.5.4} gives the final step of the proof of Theorem~\ref{t.gradhigher}.

\section{Preliminaries}\label{Sect.2}
\subsection{Notation}
Throughout the paper, let $p>1$ and $s \in (0,1)$ unless otherwise stated. Fix $q_\ast:=\max\{1,p-1\}$. 
Given $a, b\in \R$, we write  $a \wedge b:=\min\{a,b\}$ and $a\vee b:=\max\{a,b\}$. We also denote $a_+=a \vee 0$ and $a_-:=-(a \wedge 0)$. For the indicator function of a set $A$, we write $\indc_A$. The symbols $C$ and $c$ denote positive constants which may vary from line to line. We specify the dependency of the constants by writing. To lighten the notation, we denote
\[
\data := (d,s,p,\nu, L, \Lambda)\,.
\]
This shorthand allows us to denote constants $C$ which depend on $(d,s,p,\nu, L, \Lambda)$ by simply $C(\data)$ instead of $C(d,s,p,\nu, L, \Lambda)$.
\smallskip

Let $\Omega \subset \R^d$ denote an open, bounded domain with Lipschitz boundary. For $T \in (0,\infty)$, $\Omega_T:=\Omega \times (0,T)$ denotes a space-time cylinder. With center $x_0 \in \R^d$ and the radius $\rho>0$, we denote
\[
B_\rho(x_0):=\bigl\{x \in \R^d : |x-x_0|<\rho\bigr\}\,.
\]
We omit denoting the center, that is, abbreviating $B_\rho \equiv B_\rho(x_0)$, if it is not necessary or clear from the context. Moreover, we denote by 
\[
Q_{\rho}^{\lambda}(z_0):=B_\rho(x_0) \times \left(t_0-\lambda^{2-p} \rho^{2}, t_0+\lambda^{2-p} \rho^{2}\right)\,.
\]
the usual intrinsic cylinder with radius $\rho>0$, time width $2\lambda^{2-p}\rho^2>0$ and center $z_0=(x_0,t_0) \in\R^d \times \R$, where $\lambda>0$ is a scaling parameter to be determined in the arguments. Similarly as before, we omit denoting the center if it is not necessary, namely, $Q_\rho^{\lambda} \equiv Q_\rho^{\lambda}(z_0)$. In particular, we denote $Q_\rho \equiv Q_\rho^1$ for short.

We often use the normalized version of $L^p$ norms: for every $p \in [1,\infty)$ and $f \in L^p(U)$, we set
\[
\|f\|_{\underline{L}^p(U)}:=\left(\dashint_U |f|^p\d{x}\right)^{\nicefrac{1}{p}}=|U|^{-\nicefrac{1}{p}}\|f\|_{L^p(U)}\,.
\]
For a vector-valued $F \in L^p(U; \R^d)$, we write $\|F\|_{\underline{L}^p(U)}:=\||F|\|_{\underline{L}^p(U)}$.

\subsection{Functional spaces}\label{Function spaces}
In this subsection, we present various functional spaces used in the paper. The fractional Sobolev space $W^{s,p}(\R^d)$ is defined via
\[
W^{s,p}(\R^d):=\Big\{w \in L^p(\R^d) : [w]_{W^{s,p}(\R^d)}<+\infty\Big\}\,,
\]
where
\[
\displaystyle [w]_{W^{s,p}(\R^d)}:=\left(\iint_{\R^d \times \R^d} \frac{|w(x)-w(y)|^p}{|x-y|^{d+sp}}\d{x}\d{y}\right)^{\nicefrac{1}{p}}
\]
is the Gagliardo-Slobodecki\u{i} seminorm. $W^{s,p}(\R^d)$ is a Banach space endowed with the norm
\[
\|w\|_{W^{s,p}(\R^d)}:=\|w\|_{L^p(\R^d)}+[w]_{W^{s,p}(\R^d)}\,.
\]
In a similar fashion, the fractional Sobolev spaces $W^{s,p}(\Omega)$ in a domain $\Omega \subset \R^d$ can be defined. The fractional Sobolev space with zero boundary values is defined as
\[
W_0^{s,p}(\Omega):=\Big\{w \in W^{s,p}(\R^d): w=0\,\,\textrm{on}\,\,\R^d \setminus \Omega \Big\}\,.
\]
The fundamental and useful results in the fractional Sobolev spaces are presented in the comprehensive monographs~\cite{DNPV12, FeRo24}.

As discovered in~\cite{DKP14, DKP16}, the \emph{nonlocal tail} which captures the long-range interactions caused by the nonlocal problem is defined by
\[
\tail\left(u\,;x_0,\rho\right):=\left(\rho^{sp}\int_{\R^d \setminus B_\rho(x_0)} \frac{|u(x)|^{p-1}}{|x-x_0|^{d+sp}}\d{x}\right)^{\nicefrac{1}{(p-1)}};
\]
when $x_0=0$ or it is obvious from the context, we write $\tail\left(u\,; \rho\right) =\tail\left(u\,;x_0,\rho\right)$. Similarly, we define ``nonlocal $p$-tail''  by
\[
\tail_p\left(u\,;x_0,\rho\right):=\left(\rho^{sp}\int_{\R^d \setminus B_\rho(x_0)} \frac{|u(x)|^{p}}{|x-x_0|^{d+sp}}\d{x}\right)^{\nicefrac{1}{p}};
\]
the associated weighted Lebesgue space is given by
\begin{equation}\label{e.tail space}
L_{sp}^{p}(\R^d):=\left\{ u\in L_{\mathrm{loc}}^{p}(\R^d): \int_{\R^d}\frac{|u(x)|^{p}}{(1+|x|)^{d+sp}}\d{x} <\infty\right\}.
\end{equation}
A straightforward computation assures that
\[
L_{sp}^{p}(\R^d):=\left\{ u\in L_{\mathrm{loc}}^{p}(\R^d): \tail_p\left(u\,;x_0,\rho\right)<\infty, \quad \forall x_0 \in \R^d, \quad \rho>0\right\}\,.
\]

Concerning the parabolic setting, we refer to the Bochner integral:  given $p \in [1,\infty)$, $I \subset \R$ and an arbitrary Banach space $X$, we denote by $L^p(I ; X)$ the space of Lebesgue-measurable mappings $u : I \to X$ such that
\[
\|u\|_{L^p(I ; X)}:=\left(\int_I \|u(t)\|_X^p\d{t} \right)^{\nicefrac{1}{p}}<\infty\,.
\]
Finally, $C(I; X)$ denotes the space of continuous maps $t \mapsto u(t) \in X$ with respect to the norm on $X$, that is,
\[
\|u(t)-u(t_0) \|_X \to 0 \quad \mbox{as} \quad t \to t_0, \quad \forall t_0 \in I\,.
\]
\subsection{Some elementary tools}
Throughout the paper, we repeatedly use the following formula:
\begin{equation}\label{formula}
\int_{\R^d \setminus B_\rho(x_0)}\frac{\d{x}}{|x-x_0|^{d+sp}}=\frac{d \omega_d}{sp}\rho^{-sp\,},
\end{equation}
where $\omega_d$ denotes the volume of the unit ball in $\R^d$.

We recall the Gagliardo-Nirenberg inequality.
\begin{lemma}[Gagliardo-Nirenberg inequality]\label{GN}
Let  $1 \leq \chi_1,\chi_2,\chi_3 <\infty$ and $\vartheta \in (0,1)$ satisfy
\[
-\frac{d}{\chi_1} \leq \vartheta \left(1-\frac{d}{\chi_2}\right)-(1-\vartheta)\frac{d}{\chi_3}\,.
\]
Then, for every function $w \in W^{1,\chi_2}(B_\rho)$ there exists a constant $C(d, \chi_1,\chi_2,\chi_3,\vartheta)<\infty$ so that
\[
\dashint_{B_\rho}\left|\frac{w}{\rho}\right|^{\chi_1}\d{x} \leq C\left(\dashint_{B_\rho}\left(\,\left|\frac{w}{\rho}\right|^{\chi_2}+|\nabla w|^{\chi_2}\right)\d{x}\right)^{\frac{\theta \chi_1}{\chi_2}}\left(\dashint_{B_\rho}\left|\frac{w}{\rho}\right|^{\chi_3}\d{x}\right)^{\frac{(1-\theta) \chi_1}{\chi_3}}\,.
\]
\end{lemma}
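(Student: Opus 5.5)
This is the classical Gagliardo--Nirenberg interpolation inequality, so I would not prove it from scratch. The plan is to reduce it, by scaling, to the unit ball $B_1$, and there quote the standard inequality for $W^{1,\chi_2}(B_1)$ functions (with the full $W^{1,\chi_2}$ norm, as allowed on bounded Lipschitz domains). Throughout I read the interpolation exponent in the conclusion, written $\theta$, as the parameter $\vartheta$ from the hypothesis.

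\emph{Step 1: scaling.} Set $v(y):=w(x_0+\rho y)/\rho$ for $y\in B_1$. Then $\nabla v(y)=\nabla w(x_0+\rho y)$, and every normalized average over $B_\rho$ becomes the corresponding average over $B_1$. Because all terms are written as normalized averages with $w/\rho$, the claimed inequality is exactly invariant under this change of variables. Hence it suffices to prove
\[
\|v\|_{L^{\chi_1}(B_1)}\leq C\,\|v\|_{W^{1,\chi_2}(B_1)}^{\vartheta}\,\|v\|_{L^{\chi_3}(B_1)}^{1-\vartheta}.
\]
Raising this to the power $\chi_1$ and using $\|v\|_{W^{1,\chi_2}}^{\chi_2}\simeq \int(|v|^{\chi_2}+|\nabla v|^{\chi_2})$ then gives the displayed form.

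\emph{Step 2: extension and the whole-space inequality.} Since $B_1$ is a Lipschitz (indeed smooth) domain, there is an extension operator that is simultaneously bounded $W^{1,\chi_2}(B_1)\to W^{1,\chi_2}(\R^d)$ and $L^{\chi_3}(B_1)\to L^{\chi_3}(\R^d)$; a Stein-type or reflection extension works. The problem is therefore reduced to the Nirenberg inequality on $\R^d$:
\[
\|Ev\|_{\chi_1}\lesssim\|Ev\|_{W^{1,\chi_2}}^{\vartheta}\|Ev\|_{\chi_3}^{1-\vartheta}.
\]

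\emph{Step 3: the scaling condition.} This is the only delicate point. The exact Nirenberg scaling relation is $-d/\chi_1=\vartheta(1-d/\chi_2)-(1-\vartheta)d/\chi_3$. Our hypothesis is the inequality "$\leq$", i.e.\ the target exponent $\chi_1$ is at or below the critical one. On a bounded domain this is harmless:
\begin{itemize}
\item define the critical $\chi_1^\ast$ by equality;
\item if $\chi_1^\ast<\infty$, apply the critical inequality and then H\"older on $B_1$ to pass from $\chi_1^\ast$ down to $\chi_1$;
\item if the critical exponent would be infinite or degenerate (the case $\chi_2\geq d$), first lower $\chi_2$ slightly, using H\"older on $B_1$, so that the critical exponent becomes a finite number exceeding $\chi_1$.
\end{itemize}

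\emph{Main obstacle and the excluded exception.} The only obstruction is the well-known exceptional case of Nirenberg's theorem: $\chi_2=d$ with $\vartheta$ chosen so that equality would force $\chi_1=\infty$, or $1-d/\chi_2$ a nonnegative integer. Both are avoided here by $\chi_1<\infty$ and by the slack in the inequality, exactly as in the $\chi_2\geq d$ reduction of Step 3. In the paper this lemma is only invoked with $\chi_2=p<d$-type exponents in the parabolic Sobolev--Poincar\'e inequalities, so I would simply cite Nirenberg's theorem (or the monograph of DiBenedetto) for the whole-space inequality and carry out the reductions above.
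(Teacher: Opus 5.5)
Your argument is correct. The paper gives no proof of this lemma: it simply recalls the Gagliardo--Nirenberg inequality as classical, with the typo $\theta$ for $\vartheta$, which you resolved correctly. Your chain of steps is a legitimate way to supply what the paper takes for granted: scale to $B_1$, extend with a single operator bounded on both $L^{\chi_3}$ and $W^{1,\chi_2}$, apply Nirenberg's whole-space inequality at the critical exponent, and use H\"older on $B_1$ to go down to $\chi_1$.

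One point in Step 3 and in your last paragraph needs correcting.
\begin{itemize}
\item For $j=0$, $m=1$, the exceptional case of Nirenberg's theorem is $\chi_2=d$ \emph{together with} $a=1$. It is excluded simply because $\vartheta<1$, not by the slack in the hypothesis.
\item For $\chi_2=d$ the critical exponent $\chi_1^\ast=\chi_3/(1-\vartheta)$ is finite, so you must apply the critical inequality directly. Lowering $\chi_2$ below $d$ fails when the hypothesis holds with equality. For example, take $d=2$ and $(\chi_1,\chi_2,\chi_3,\vartheta)=(4,2,2,\tfrac12)$, which is Ladyzhenskaya's inequality: every $\tilde\chi_2<2$ violates the condition.
\item The reduction by lowering $\chi_2$ is needed exactly when $\chi_2>d$ and $1/\chi_1^\ast\le 0$. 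In that case the function $t\mapsto \vartheta(1/t-1/d)+(1-\vartheta)/\chi_3$ is continuous and decreasing on $[d,\chi_2]$, positive at $t=d$ and nonpositive at $t=\chi_2$. The intermediate value theorem then gives $\tilde\chi_2\in(d,\chi_2)$ whose critical exponent lies in $[\chi_1,\infty)$.
\item Your remark that the paper only uses $\chi_2<d$ is inaccurate. In Lemma~\ref{SP2} one has $\chi_2=\kappa p$. For $d=2$, $p=4$ this gives $\kappa=3/4$, $\chi_2=3>d$ and $1/\chi_1^\ast=0$, so the $\chi_2>d$ branch of your argument is actually used.
\end{itemize}
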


We need the following useful lemma. The proof is similar to that of~\cite[Lemma 3.5]{BDKS20}.
\begin{lemma}\label{t.useful lemma}
Let $p \geq 1$ and  $V \subset U \subset \R^d$ be measurable sets such that $0<|V| \leq |U|<\infty$. For every $v \in L^p(U)$, 
\[
\dashint_U\left|v(x)-(v)_V\right|^p\d{x} \leq 2^p\frac{|U|}{|V|}\,\dashint_U\left|v(x)-(v)_U\right|^p\d{x}\,.
\]
\end{lemma}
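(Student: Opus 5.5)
The plan is to compare both sides of the claimed inequality with the mean over $U$, using the triangle inequality and Jensen's inequality. Write
\[
v(x)-(v)_V=\bigl(v(x)-(v)_U\bigr)+\bigl((v)_U-(v)_V\bigr).
\]
The elementary bound $|a+b|^p\leq 2^{p-1}(|a|^p+|b|^p)$, valid for $p\geq 1$, then gives
\[
\dashint_U|v-(v)_V|^p\d{x}\leq 2^{p-1}\dashint_U|v-(v)_U|^p\d{x}+2^{p-1}\big|(v)_U-(v)_V\big|^p .
\]

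It remains to control the constant term $|(v)_U-(v)_V|^p$. Since $(v)_U$ is a constant, we can rewrite it as
\[
(v)_U-(v)_V=\dashint_V\bigl((v)_U-v\bigr)\d{y}.
\]
Jensen's inequality, which applies because $p\geq1$, yields
\[
|(v)_U-(v)_V|^p\leq \dashint_V|v-(v)_U|^p\d{y}.
\]
Since $V\subset U$ and the integrand is nonnegative, we may enlarge the domain of integration and obtain
\[
\dashint_V|v-(v)_U|^p\d{y}\leq \frac{1}{|V|}\int_U|v-(v)_U|^p\d{y}=\frac{|U|}{|V|}\dashint_U|v-(v)_U|^p\d{y}.
\]

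Combining the two bounds and using $1\leq |U|/|V|$, we get
\[
\dashint_U|v-(v)_V|^p\d{x}\leq 2^{p-1}\Bigl(1+\frac{|U|}{|V|}\Bigr)\dashint_U|v-(v)_U|^p\d{x}\leq 2^{p}\frac{|U|}{|V|}\dashint_U|v-(v)_U|^p\d{x},
\]
which is the claim. All averages are finite because $v\in L^p(U)\subset L^1(U)$, as $|U|<\infty$. There is no real obstacle here. The only point needing care is the constant: $2^{p-1}(1+|U|/|V|)\leq 2^p|U|/|V|$ holds precisely because $|U|/|V|\geq1$.
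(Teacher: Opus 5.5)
Your proof is correct and follows the same approach as the paper. The paper applies Jensen (H\"older) first, writing $|v(x)-(v)_V|^p\le\dashint_V|v(x)-v(y)|^p\d{y}$ and then inserting $(v)_U$, whereas you split off the constant $(v)_U-(v)_V$ first and apply Jensen to it afterwards; both routes produce the same bound $2^{p-1}(1+|U|/|V|)\le 2^p|U|/|V|$.
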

\begin{proof}
By H\"{o}lder's inequality and the elementary inequality
\[
(a+b)^{p} \leq 2^{p-1}(a^p+b^p) \quad \mbox{for} \quad a, b \in \R_{\geq 0}\,, 
\]
we observe that
\begin{align*}
\dashint_U\left|v(x)-(v)_V\right|^p \d{x}&\leq 2^{p-1}\dashint_U\dashint_V\left|v(x)-(v)_U\right|^p \d{y}\d{x} \\
&\quad \quad +2^{p-1}\dashint_U\dashint_{V}\left|(v)_U-v(y)\right|^p\d{y}\d{x}\,,
\end{align*}
and thus rearranging gives the desired result.
\end{proof}
We finally record the classical iteration lemma. The proof is done by~\cite[Lemma 6.1]{Giusti} with a minor adaptation.
\begin{lemma}\label{t.iteration}
Let $A$, $B$, $\alpha$ be nonnegative constants. Suppose that $h:[R_0, R_1] \to [0, \infty)$ satisfies
\[
h(t) \leq \frac{1}{2}h(s)+\frac{A}{(s-t)^\alpha}+B
\]
for every $R_0 \leq t<s<R_1$. Then there exists a constant $C(\alpha)<\infty$ such that
\[
h(R_0) \leq C\left(\frac{A}{(R_1-R_0)^\alpha}+B\right)\,.
\]
\end{lemma}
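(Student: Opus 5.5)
The statement to be proved is Lemma~\ref{t.iteration}. I would follow the standard proof in~\cite[Lemma 6.1]{Giusti}: iterate the hypothesis along a geometric sequence of radii between $R_0$ and $R_1$. Fix $\tau\in(0,1)$ with $\tau^{-\alpha}/2<1$, say $\tau^{\alpha}=3/4$, so that $\tau^{-\alpha}/2=2/3$. Define $t_0:=R_0$ and
\[
t_{i+1}:=t_i+(1-\tau)\tau^i(R_1-R_0)\,.
\]
Then $t_i$ increases, $t_i<R_1$ for every $i$, $t_i\to R_1$, and $t_{i+1}-t_i=(1-\tau)\tau^i(R_1-R_0)$.

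Applying the hypothesis with $t=t_i$ and $s=t_{i+1}$ gives
\[
h(t_i)\leq \tfrac12 h(t_{i+1})+\frac{A}{(1-\tau)^\alpha\tau^{i\alpha}(R_1-R_0)^\alpha}+B\,.
\]
Iterating $k$ times from $i=0$ yields
\[
h(R_0)\leq 2^{-k}h(t_k)+\sum_{i=0}^{k-1}2^{-i}\left(\frac{A\,\tau^{-i\alpha}}{(1-\tau)^\alpha(R_1-R_0)^\alpha}+B\right).
\]
The series $\sum_i (2^{-1}\tau^{-\alpha})^i=\sum_i (2/3)^i$ and $\sum_i 2^{-i}$ both converge. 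Hence the sum is at most $C(\alpha)\big(A(R_1-R_0)^{-\alpha}+B\big)$, with $C(\alpha)$ depending on $\tau$, which itself depends only on $\alpha$.

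The main obstacle is the remainder $2^{-k}h(t_k)$, which must vanish as $k\to\infty$. This requires $h$ to be bounded on $[R_0,R_1)$; this boundedness is implicit in the setting, where $h$ is a finite integral quantity on nested cylinders, and it is what the ``minor adaptation'' of~\cite{Giusti} refers to. If needed, I would state it as an extra hypothesis. Alternatively, since $h(t_k)\leq \frac12 h(s)+A(s-t_k)^{-\alpha}+B$ for any fixed $s\in(t_k,R_1)$, finiteness of $h$ at a single point near $R_1$ already gives $\sup_k h(t_k)<\infty$ in typical applications. Once the remainder is controlled, letting $k\to\infty$ gives the claimed bound $h(R_0)\leq C(\alpha)\big(A(R_1-R_0)^{-\alpha}+B\big)$.
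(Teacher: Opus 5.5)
\textbf{Gap in the remainder term.} Your iteration along $t_{i+1}=t_i+(1-\tau)\tau^i(R_1-R_0)$ is exactly Giusti's argument (Lemma 6.1), which is all the paper invokes. The problem is the remainder $2^{-k}h(t_k)$.

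The lemma as stated assumes only that $h$ is finite-valued, not bounded. Adding boundedness as an extra hypothesis would therefore prove a weaker statement. It would be harmless for the paper's applications, where $h$ is bounded, but it is also unnecessary.

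Your alternative justification does not work. For a fixed $s\in(R_0,R_1)$, the hypothesis bounds $h(t_k)$ only for the finitely many $k$ with $t_k<s$. Since $t_k\to R_1$, finiteness of $h(s)$ tells you nothing about $\sup_k h(t_k)$. You also cannot take $s=R_1$, because the hypothesis is only available for $s<R_1$.

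\textbf{The fix.} Run your iteration on a slightly shorter interval, where boundedness comes for free.
\begin{itemize}
\item Fix $R_1'\in(R_0,R_1)$ and some $s_*\in(R_1',R_1)$.
\item For every $t\in[R_0,R_1']$ you have $s_*-t\geq s_*-R_1'>0$. The hypothesis with $s=s_*$ then gives $h(t)\leq \tfrac12 h(s_*)+A(s_*-R_1')^{-\alpha}+B=:M<\infty$.
\item Build the sequence $t_i$ with $R_1'$ in place of $R_1$. All pairs $t_i<t_{i+1}<R_1'<R_1$ are admissible, and the remainder is at most $2^{-k}M\to 0$.
\item This yields $h(R_0)\leq C(\alpha)\bigl(A(R_1'-R_0)^{-\alpha}+B\bigr)$, and letting $R_1'\uparrow R_1$ gives the claim.
\end{itemize}

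A minor point: for $\alpha=0$ the choice $\tau^{\alpha}=3/4$ is impossible, but then any $\tau\in(0,1)$ works.
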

%

\subsection{Mollifiers}
We remark that the time derivative of weak solutions is not integrable, which is defined only in the weak sense, and therefore we require the~\emph{exponential mollifications in time}, which are defined for $w\in L^1(\Omega_T,\R)$ and $h>0$,
\[
\llbracket w\rrbracket_h(x,t):=\displaystyle \frac{1}{h} \int_0^t e^{\frac{\vartheta-t}h}w(x,\vartheta)\d{\vartheta} \quad \mbox{and} \quad \llbracket w\rrbracket_{\bar{h}}(x,t):=\displaystyle \frac{1}{h} \int_t^T e^{\frac{t-\vartheta}h}w(x,\vartheta)\d{\vartheta}\,.
\]
These were originally designed for the doubly nonlinear equations. Nevertheless, instead of the usual Steklov average, we adopt them. The following relevant properties are valid; see~\cite[Lemma 2.2]{KL06} and~\cite[Appendix B]{BDM13} for details.

\begin{lemma}\label{t.mollifier}
Fix $q \geq 1$ arbitrarily. The following statements are true:
\begin{enumerate}
\item[(i)] If $w \in L^q(\Omega_T)$, then $ \llbracket w\rrbracket_h \in L^q(\Omega_T)$. Moreover, 
$$\left\| \llbracket w\rrbracket_h \right\|_{L^q(\Omega_T)} \leq \left\|w \right\|_{L^q(\Omega_T)}$$ and $\llbracket w\rrbracket_h  \to w$ strongly in $L^q(\Omega_T)$ and almost everywhere in $\Omega_T$ as $h \searrow  0$. The same statements hold for $\llbracket w\rrbracket_{\bar{h}}$ as well.\\
\item[(ii)] $\llbracket w\rrbracket_h \in C\left([0,T]; L^q(\Omega)\right)$ and $\llbracket w\rrbracket_{\bar{h}} \in C\left([0,T]; L^q(\Omega)\right)$.\\
\item[(iii)] The differential formula holds almost everywhere in $\Omega_T$:
\[
\partial_t \llbracket w\rrbracket_h =\frac{1}{h} \left(w-\llbracket w\rrbracket_h \right) \quad \mbox{and} \quad \partial_t \llbracket w\rrbracket_{\bar{h}} =\frac{1}{h} \left(\llbracket w\rrbracket_{\bar{h}}-w \right)\,.
\]
\item[(iv)] If $w \in C\left([0,T]; L^q(\Omega)\right)$, then $\llbracket w(\cdot, t)\rrbracket_h \to w(\cdot, t)$ strongly in $L^q(\Omega)$ and almost everywhere in $\Omega$ for every $t \in [0,T]$ as $h \searrow  0$. The same statements hold for $\llbracket w\rrbracket_{\bar{h}}$ as well.
\end{enumerate}
\end{lemma}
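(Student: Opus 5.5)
The plan is to prove the four properties directly from the integral definition, treating only $\llbracket w\rrbracket_h$; the reversed mollifier $\llbracket w\rrbracket_{\bar h}$ is handled the same way after the time reflection $t\mapsto T-t$. The key observation is a convolution representation. Extend $w$ by zero outside $(0,T)$ and set $\eta_h(\tau):=\frac1h e^{-\tau/h}\indc_{(0,\infty)}(\tau)$. Then, for $t\in(0,T)$,
\[
\llbracket w\rrbracket_h(x,t)=\int_0^\infty \eta_h(\tau)\,w(x,t-\tau)\d{\tau}=\int_0^\infty e^{-\sigma}\,w(x,t-h\sigma)\d{\sigma}\,,
\]
so $\llbracket w\rrbracket_h$ is a one-sided time convolution of $w$ with a probability kernel, since $\|\eta_h\|_{L^1(\R)}=1$.

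For (i), I apply Minkowski's integral inequality (or Young's convolution inequality in $t$ for a.e. $x$, followed by integration in $x$). This gives $\|\llbracket w\rrbracket_h\|_{L^q(\Omega_T)}\le \|\eta_h\|_{L^1}\|w\|_{L^q(\Omega_T)}=\|w\|_{L^q(\Omega_T)}$. Strong convergence follows by density. For $\varphi\in C_c(\Omega_T)$, uniform continuity and the substitution $\tau=h\sigma$ give $\llbracket \varphi\rrbracket_h\to\varphi$ uniformly, because $\int_0^\infty e^{-\sigma}|\varphi(x,t-h\sigma)-\varphi(x,t)|\d{\sigma}\to0$. The uniform contraction bound then passes this to all $w\in L^q$ by an $\eps/3$ argument. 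For a.e. convergence, I note that $\eta_h$ is a radially decreasing (one-sided) approximate identity, so $\sup_h|\llbracket w\rrbracket_h|\le C\,M_t w$, where $M_t$ is the one-dimensional Hardy--Littlewood maximal function in time. The standard Lebesgue differentiation argument then yields convergence at every Lebesgue point in $t$ for a.e. $x$. Alternatively, I pass to a subsequence from the $L^q$ convergence; this suffices for the uses in the paper.

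For (ii) and (iii), I write $\llbracket w\rrbracket_h(\cdot,t)=\frac1h e^{-t/h}\int_0^t e^{\vartheta/h}w(\cdot,\vartheta)\d{\vartheta}$. The map $t\mapsto\int_0^t e^{\vartheta/h}w(\cdot,\vartheta)\d{\vartheta}$ is the indefinite Bochner integral of a function in $L^1(0,T;L^q(\Omega))$, hence it is (absolutely) continuous into $L^q(\Omega)$. Multiplying by the smooth factor $\frac1h e^{-t/h}$ preserves this, which gives (ii). Differentiating the product by the product rule, together with the Lebesgue differentiation theorem for Bochner integrals (or pointwise a.e. in $(x,t)$ by Fubini), gives $\partial_t\llbracket w\rrbracket_h=\frac1h w-\frac1h\llbracket w\rrbracket_h$ a.e., which is (iii).

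For (iv), fix $t\in[0,T]$ and write
\[
\llbracket w\rrbracket_h(t)-w(t)=\int_0^{t/h}e^{-\sigma}\bigl(w(t-h\sigma)-w(t)\bigr)\d{\sigma}-e^{-t/h}w(t)\,.
\]
For $t>0$, the last term tends to $0$. The integral also tends to $0$ in $L^q(\Omega)$ by dominated convergence, using the continuity of $w$ into $L^q(\Omega)$ and the bound $\|w(t-h\sigma)-w(t)\|_{L^q}\le 2\sup_{[0,T]}\|w\|_{L^q}$.

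The main obstacle is the endpoint $t=0$. There $\llbracket w\rrbracket_h(0)=0$, so the claim must be interpreted with the convention that $w$ is extended by its initial value, or with the initial datum incorporated as in \cite{KL06}. I would follow \cite[Lemma 2.2]{KL06} by adding the term $e^{-t/h}w_0$ to the mollifier when initial data matter. In any case, the local arguments of this paper only use $t$ in the interior. For the a.e. statement in (iv), I again extract a subsequence from the $L^q$ convergence.
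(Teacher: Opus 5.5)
The paper does not prove this lemma; it only cites \cite[Lemma 2.2]{KL06} and \cite[Appendix B]{BDM13}. Your argument is therefore a self-contained replacement rather than a variant of an in-paper proof, and it is correct.

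Writing $\llbracket w\rrbracket_h$ as the one-sided time convolution of the zero extension of $w$ with the probability kernel $\eta_h$ makes each part standard:
\begin{itemize}
\item In (i), the contraction and strong convergence reduce to Young's inequality plus density. Density needs $q<\infty$, which is clearly intended: for $w\equiv 1$ one has $\llbracket w\rrbracket_h=1-e^{-t/h}$, so $\|\llbracket w\rrbracket_h-w\|_{L^\infty(\Omega_T)}=1$.
\item The a.e.\ convergence of the whole family in (i) follows from the bound $\sup_h|\llbracket w\rrbracket_h|\lesssim M_t w$, obtained by dominating $\eta_h$ with $\frac1h e^{-|\tau|/h}$.
\item Parts (ii)--(iii) reduce to calculus of indefinite Bochner integrals.
\item Part (iv) reduces to dominated convergence.
\item The reflection $t\mapsto T-t$ transfers everything to $\llbracket\cdot\rrbracket_{\bar h}$, including the sign change in (iii).
\end{itemize}

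What your route gains over the citation is that it shows exactly where the statement overclaims. Both of your caveats are corrections to the statement, not gaps in your proof.

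\emph{Endpoints.} Since $\llbracket w\rrbracket_h(\cdot,0)=0$, (iv) at $t=0$ holds only if $w(\cdot,0)=0$, or for the corrected mollifier $e^{-t/h}w(\cdot,0)+\llbracket w\rrbracket_h$. Symmetrically, $\llbracket w\rrbracket_{\bar h}(\cdot,T)=0$, so (iv) fails at $t=T$ for the backward mollifier unless $w(\cdot,T)=0$.

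\emph{Slice-wise a.e.\ convergence.} Convergence a.e.\ in $\Omega$ on a fixed time slice, along the whole family $h\searrow0$, is false in general, so your subsequence version is the correct statement. Here is a counterexample.
\begin{itemize}
\item Fix $t_0\in(0,T]$.
\item For $k$ large, let $\psi_k$ be a tent of height $1$ supported in $[t_0-2^{-k},t_0-2^{-k-1}]$.
\item Let $E_k\subset\Omega$ be a ``typewriter'' sequence with $|E_k|\to0$ and $\limsup_k\indc_{E_k}\equiv1$.
\item Put $w:=\sum_k\psi_k(t)\indc_{E_k}(x)$.
\end{itemize}
Then $w\in C([0,T];L^q(\Omega))$ and $w(\cdot,t_0)=0$. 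For $h=2^{-k}$ one has $e^{(\vartheta-t_0)/h}\geq e^{-1}$ on $\supp\psi_k$, which gives $\llbracket w\rrbracket_{2^{-k}}(\cdot,t_0)\geq\frac{1}{4e}\indc_{E_k}$. Hence $\limsup_{h\searrow0}\llbracket w\rrbracket_h(x,t_0)\geq\frac{1}{4e}$ for every $x\in\Omega$.

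As you note, the paper's local arguments only involve interior times, so neither correction affects the rest of the paper.
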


\subsection{Weak formulation}
We adopt the notion of weak solutions to~\eqref{maineq} in the following.
\begin{definition}[Weak solution]\label{dfn u}
Let $p>1$, $s \in (0,1)$ and assume that (A1)--(A4) hold and that $F \in L^p_{\mathrm{loc}}\left(\Omega_T\,;\R^d\right)$. We identify a function
\begin{equation}\label{e.class}
u  \in  C\left([0,T]\,; L^2(\Omega) \right) \cap L^p\left(0,T\,; W^{1,p}(\Omega)\right) \cap L^p\left(0,T\,; L^{p}_{sp}(\R^d)\right)
\end{equation}
as a \emph{weak solution} of~\eqref{maineq} if and only if the energy identity
\begin{align}\label{weak form}
\int_{\Omega_T}\Big(-u \,\partial_t\varphi &+\mathbf{a}(x,t,u,\nabla u)\cdot \nabla \varphi\Big)\d{x}\d{t}+\int_0^T \mathcal{B}_K(u(t),\varphi(t))\d{t} \notag\\
&=\int_{\Omega_T}|F|^{p-2}F\cdot \nabla \varphi\d{x}\d{t}
\end{align}
holds whenever $\varphi \in C^\infty_0(\Omega_T)$, where the associated weak energy form $\cB_{K}$ is given by
\begin{align*}
\mathcal{B}_K(u(t),\varphi(t)):=\iint_{\R^d \times \R^d} \mathds{K}(x,y,t)&|u(x,t)-u(y,t)|^{p-2}\left(u(x,t)-u(y,t)\right)\\
&\times \left(\varphi(x,t)-\varphi(y,t)\right)\d{x}\d{y}\,.
\end{align*}

\end{definition}

We finally list the mollified type of weak formulation~\eqref{weak form}. The proof is done by means of Fubini's theorem for the double integral and a careful reading of~\cite[Lemma 2.10]{Nak22a} in the doubly nonlinear framework.

\begin{lemma}\label{dfn u-h}\normalfont
Let $p>1$, $s \in (0,1)$ and assume that~(A1)--(A4) and $F \in L^p_{\mathrm{loc}}\left(\Omega_T\,;\R^d\right)$. Let $u$ be a weak solution to~\eqref{maineq} in the sense of Definition~\ref{dfn u}. Then,
\begin{align}\label{weak form-h}
\int_{\Omega_T}\Big(\partial_t \llbracket u \rrbracket _h\varphi &+\llbracket \mathbf{a}(x,t,u,\nabla u)\rrbracket_h\cdot \nabla \varphi\Big)\d{x}\d{t} +\int_0^T\mathcal{B}_{K, h}(u(t),\varphi(t))\d{t} \notag\\
&=\int_{\Omega_T}\llbracket |F|^{p-2}F\rrbracket _h \cdot \nabla \varphi\d{x}\d{t}
\end{align}
for every $\varphi \in C^\infty_0(\Omega_T)$, where
\begin{align*}
\mathcal{B}_{K, h}(u(t),\varphi(t)):=\iint_{\R^d \times \R^d} \llbracket \mathds{K}(x,y,t)&|u(x,t)-u(y,t)|^{p-2}\left(u(x,t)-u(y,t)\right) \rrbracket_h\\
&\times \left(\varphi(x,t)-\varphi(y,t)\right)\d{x}\d{y}\,.
\end{align*}
\end{lemma}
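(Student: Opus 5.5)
The plan is to test the weak formulation \eqref{weak form} with the \emph{backward} time mollification of $\varphi$ and then move the mollification from the test function onto $u$, $\mathbf{a}$, $F$ and the kernel term using Fubini's theorem. Fix $\varphi\in C_0^\infty(\Omega_T)$ with $\supp\varphi\subset K\times[t_1,t_2]$, where $K\Subset\Omega$ and $0<t_1<t_2<T$, and set $\psi:=\llbracket\varphi\rrbracket_{\bar h}$. By Lemma~\ref{t.mollifier}, $\psi$ is smooth in $t$ and vanishes for $t>t_2$. Its spatial support stays in $K$, and
\[
\partial_t\psi=\tfrac1h(\psi-\varphi).
\]
However, $\psi$ does not vanish near $t=0$. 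I would therefore test \eqref{weak form} with $\psi\,\zeta_\varepsilon$, where $\zeta_\varepsilon(t):=\min\{t/\varepsilon,1\}$, suitably smoothed, and let $\varepsilon\to0$. Since $u\in C([0,T];L^2(\Omega))$, the extra term $-\tfrac1\varepsilon\int_0^\varepsilon\!\int_\Omega u\,\psi\d{x}\d{t}$ converges to $-\int_\Omega u(x,0)\psi(x,0)\d{x}$.

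For the time-derivative term, the key identity is the elementary Fubini relation
\[
\int_0^T f\,\llbracket g\rrbracket_{\bar h}\d{t}=\int_0^T\llbracket f\rrbracket_h\, g\d{t},
\]
valid for integrable $f,g$: both sides equal $\tfrac1h\iint_{0<\vartheta<t<T}e^{(\vartheta-t)/h}f(\vartheta)g(t)\d{\vartheta}\d{t}$. Applying it together with Lemma~\ref{t.mollifier}(iii) gives
\[
-\int_{\Omega_T}u\,\partial_t\psi=\tfrac1h\int_{\Omega_T}u(\varphi-\psi)=\tfrac1h\int_{\Omega_T}(u-\llbracket u\rrbracket_h)\varphi=\int_{\Omega_T}\partial_t\llbracket u\rrbracket_h\,\varphi .
\]
The same relation applied to $\mathbf a(x,t,u,\nabla u)\cdot\nabla\psi$ and $|F|^{p-2}F\cdot\nabla\psi$ (with $\nabla\psi=\llbracket\nabla\varphi\rrbracket_{\bar h}$) produces the mollified local terms. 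Here one uses (A2), $\nabla u\in L^p$ and $F\in L^p(K\times(0,T))$ for integrability.

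The boundary contribution $-\int_\Omega u(0)\psi(0)\d{x}$ must be tracked. Since $\psi(x,0)=\tfrac1h\int_0^Te^{-\vartheta/h}\varphi(x,\vartheta)\d\vartheta$, this term equals $\int_{\Omega_T}\partial_t\big(e^{-t/h}u(\cdot,0)\big)\varphi$. It is therefore absorbed exactly by reading $\llbracket u\rrbracket_h$ as $e^{-t/h}u(\cdot,0)+\tfrac1h\int_0^te^{(\vartheta-t)/h}u(\cdot,\vartheta)\d\vartheta$, which is the convention of \cite{KL06} and \cite{Nak22a}. With this convention, Lemma~\ref{t.mollifier}(iii) still holds, and the time term appears exactly as in \eqref{weak form-h}. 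I would state this convention explicitly. Alternatively, one can note that the term is harmless in every later application, where only differences in time of $\llbracket u\rrbracket_h$ enter.

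The main obstacle is the nonlocal term. Here I would apply the Fubini relation in $t$ for each fixed pair $(x,y)$ to
\[
G(x,y,t):=\mathds K(x,y,t)\,|u(x,t)-u(y,t)|^{p-2}(u(x,t)-u(y,t)),
\]
tested against $\psi(x,t)-\psi(y,t)=\llbracket\varphi(x,\cdot)-\varphi(y,\cdot)\rrbracket_{\bar h}(t)$. This requires $G\,(\varphi(x,t)-\varphi(y,t))$ to lie in $L^1(\R^d\times\R^d\times(0,T))$, and likewise with $\varphi$ replaced by $\psi$, so that Fubini–Tonelli applies in $(x,y,t)$. To prove this, pick $K\Subset B_R\Subset\Omega$ and split $\R^d\times\R^d$ into $B_R\times B_R$ and the complement, where at least one variable lies outside $B_R$. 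On $B_R\times B_R$, I would use $|\psi(x,t)-\psi(y,t)|\le\|\nabla\varphi\|_\infty|x-y|$, (A4), and Hölder's inequality with exponents $p/(p-1)$ and $p$. This bounds the term by $\big(\int_0^T[u]_{W^{s,p}(B_R)}^p\big)^{(p-1)/p}$ times a finite integral of $|x-y|^{p-d-sp}$. The seminorm is finite because $W^{1,p}(B_R)\hookrightarrow W^{s,p}(B_R)$. On the complement, $\psi(y,t)=0$ for $y\notin K$ and $|x-y|\ge\operatorname{dist}(K,\partial B_R)>0$. So the term is bounded by
\[
\|\varphi\|_\infty\int_0^T\!\!\int_K\Big(|u(x,t)|^{p-1}+\int_{\R^d\setminus B_R}\frac{|u(y,t)|^{p-1}}{(1+|y|)^{d+sp}}\d{y}\Big)\d{x}\d{t}<\infty,
\]
which is finite by the class \eqref{e.class}, that is, $u\in L^p(0,T;L^p_{sp}(\R^d))$, and Hölder's inequality. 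The same bounds hold for the $\varepsilon$-cutoff, so passing $\varepsilon\to0$ is justified by dominated convergence. Collecting all terms then yields \eqref{weak form-h}.
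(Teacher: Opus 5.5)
Your proposal is correct and is the argument the paper delegates to \cite[Lemma 2.10]{Nak22a} and Fubini's theorem. You test \eqref{weak form} with $\llbracket\varphi\rrbracket_{\bar h}$ (cut off near $t=0$), then transfer the mollification onto $u$, $\mathbf{a}$, $|F|^{p-2}F$ and, for each fixed pair $(x,y)$, onto $\mathds{K}(x,y,t)|u(x,t)-u(y,t)|^{p-2}(u(x,t)-u(y,t))$, with your splitting into $B_R\times B_R$ and its complement supplying the absolute integrability Fubini needs. Your point about the initial term is also right and more careful than the paper's statement: with the literal definition of $\llbracket\cdot\rrbracket_h$, the left-hand side of \eqref{weak form-h} is missing $-\frac{1}{h}\int_\Omega u(x,0)\int_0^T e^{-t/h}\varphi(x,t)\d{t}\d{x}$, which your convention $\llbracket u\rrbracket_h=e^{-t/h}u(\cdot,0)+\dots$ absorbs exactly. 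That term is harmless later because it is $O(h^{-1}e^{-t_1/h})$ when $\varphi=0$ for $t<t_1$, and hence vanishes as $h\searrow 0$; it is not harmless because only time differences enter. Finally, you should make explicit that the transfer for $F$, and indeed the very definition of $\llbracket|F|^{p-2}F\rrbracket_h$, requires $|F|^{p-1}$ to be integrable on $K\times(0,T)$. You use this via $F\in L^p(K\times(0,T))$, but it does not follow from the stated hypothesis $F\in L^p_{\mathrm{loc}}(\Omega_T)$.
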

\subsection{Caccioppoli inequality}

We conclude this section by stating the Caccioppoli inequality. 

%
%
\begin{lemma}[Caccioppoli type estimate]\label{t.caccioppoli}
Let $p >1$, $s \in (0,1)$ and let $u$ be a weak solution to~\eqref{maineq} in the sense of Definition~\ref{dfn u} under the assumptions~(A1)--(A4). Suppose further that $F \in L^p_{\mathrm{loc}}\left(\Omega_T\,;\R^d\right)$. There exists a constant $C$ depending only on $\data$ such that 
\begin{align}\label{e.caccioppoli}
&\sup_{t \in I^\lambda_{\rho}(t_0)}\dashint_{B_{\rho}(x_0)}\frac{\left|u(t)-k\right|^2}{\lambda^{2-p}\rho^2}\d{x}+\dashint_{Q_{\rho}^\lambda(z_0)}|\nabla (u-k)|^p\d{x}\d{t} \notag\\[2mm]
&\quad \quad +\dashint_{I_{\rho}^\lambda(t_0)}\int_{B_{\rho}(x_0)}\dashint_{B_{\rho}(x_0)}\frac{|\left(u(x,t)-k\right)-\left(u(y,t)-k\right)|^p}{|x-y|^{d+sp}}\d{x}\d{y}\d{t} \notag\\[2mm]
&\leq  C \dashint_{Q_R^\lambda(z_0)}\left[\frac{\left|u-k\right|^p}{(R-\rho)^p}+\frac{\left|u-k\right|^2}{\lambda^{2-p}\left(R^2-\rho^2\right)}+|F|^p \right]\d{x}\d{t} \notag\\[2mm]
&\quad \quad \quad +\frac{C}{(R-\rho)^p}\dashint_{I_{R}^\lambda(t_0)}\int_{B_{R}(x_0)}\dashint_{B_{R}(x_0)}\frac{\left|u(x,t)-k\right|^p}{|x-y|^{d-(1-s)p}}\d{x}\d{y}\d{t} \notag\\[2mm]
&\quad \quad \quad +C\dashint_{Q_{R}^\lambda(z_0)}\left(\sup_{x\in B_{(\rho+R)/2}(x_0)}\int_{\R^d \setminus B_{R}(x_0)}\frac{\left|u(y,t)-k\right|^{p-1}}{|x-y|^{d+sp}}\d{y}\right)\left|u-k\right|\d{x}\d{t}
\end{align}
whenever concentric cylinders $Q_\rho^\lambda (z_0) \subset Q_R^\lambda(z_0)$ and level $k \in \R$.
\end{lemma}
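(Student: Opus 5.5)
We test the mollified weak formulation of Lemma~\ref{dfn u-h} with $\varphi=\llbracket \cdots\rrbracket$-free test function $\varphi=\eta^p\zeta\,(\llbracket u\rrbracket_h-k)$, and then let $h\searrow 0$. Here $\eta\in C_0^\infty(B_{(\rho+R)/2}(x_0))$ satisfies $\eta\equiv1$ on $B_\rho$ and $|\nabla\eta|\le C/(R-\rho)$. The time cutoff $\zeta$ is Lipschitz, vanishes near $t_0-\lambda^{2-p}R^2$, equals $1$ on $I^\lambda_\rho(t_0)$, has $|\zeta'|\le C/(\lambda^{2-p}(R^2-\rho^2))$, and is additionally cut off at an arbitrary time $\tau\in I^\lambda_\rho(t_0)$ so that we obtain the supremum term. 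The formal computations are the standard ones.

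\textbf{Parabolic part.} Using Lemma~\ref{t.mollifier}(iii), we have $\partial_t\llbracket u\rrbracket_h(\llbracket u\rrbracket_h-k)\le \partial_t\llbracket u\rrbracket_h(u-k)$ up to the sign term $\frac1h(u-\llbracket u\rrbracket_h)(\llbracket u\rrbracket_h-u)\le0$. This gives $\tfrac12\partial_t|\llbracket u\rrbracket_h-k|^2$, and integration by parts in time yields
\[
\tfrac12\int_{B_R}\eta^p|u(\tau)-k|^2\,dx-\tfrac12\iint|u-k|^2\eta^p\zeta'.
\]
The second term produces the $|u-k|^2/(\lambda^{2-p}(R^2-\rho^2))$ contribution.

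\textbf{Local part.} After $h\to0$, (A2) and Young's inequality give
\[
\nu\iint\eta^p\zeta|\nabla u|^p-C\iint\frac{|u-k|^p}{(R-\rho)^p}.
\]
The $F$ term is absorbed similarly: $|F|^{p-1}|\nabla\varphi|\le \epsilon\eta^p|\nabla u|^p+C|F|^p+C|u-k|^p|\nabla\eta|^p$. Since $\nabla(u-k)=\nabla u$, the gradient term on the left follows.

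\textbf{Nonlocal part (the main obstacle).} We split $\R^d\times\R^d$ into $B_R\times B_R$ and $2\times\big((B_R)\times(\R^d\setminus B_R)\big)$, writing $w=u-k$.

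On $B_R\times B_R$ we use the known algebraic inequality from \cite{DKP16}:
\[
|a-b|^{p-2}(a-b)(a\eta_1^p-b\eta_2^p)\ge c|a-b|^p\max\{\eta_1,\eta_2\}^p-C\max\{|a|,|b|\}^p|\eta_1-\eta_2|^p.
\]
This gives the Gagliardo seminorm term over $B_\rho\times B_\rho$ on the left, with error bounded by
\[
C\iint_{B_R\times B_R}|w(x)|^p\frac{|\eta(x)-\eta(y)|^p}{|x-y|^{d+sp}}\le \frac{C}{(R-\rho)^p}\iint\frac{|w(x)|^p}{|x-y|^{d-(1-s)p}},
\]
using $|\eta(x)-\eta(y)|\le|x-y|/(R-\rho)$. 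This is exactly the fourth term on the right-hand side, after normalizing by $|B_R|$.

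On the off-diagonal part, $\eta(y)=0$ for $y\notin B_R$, so the integrand is $|w(x)-w(y)|^{p-2}(w(x)-w(y))w(x)\eta^p(x)$. Dropping the favorable sign part and bounding by $(|w(x)|^{p-1}+|w(y)|^{p-1})|w(x)|\eta^p(x)K$, the $|w(x)|^p$ contribution over $y\notin B_R$ with $x\in\supp\eta$ uses $|x-y|\ge (R-\rho)/2$ and \eqref{formula}, giving $C R^{sp}\ldots$. More simply, we bound it by the fourth term as well, since $|x-y|^{-d-sp}\le C(R-\rho)^{-p}|x-y|^{-d+(1-s)p}\cdot(\ldots)$; alternatively we keep it inside the sup-tail term. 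The $|w(y)|^{p-1}|w(x)|$ contribution is bounded by $\sup_{x\in B_{(\rho+R)/2}}\int_{\R^d\setminus B_R}|w(y)|^{p-1}|x-y|^{-d-sp}\,dy\cdot|w(x)|$, which is the last term.

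The passage $h\to0$ inside $\mathcal B_{K,h}$ is the delicate technical step. It requires $u\in L^p(0,T;L^p_{sp})$ to justify dominated convergence of the mollified kernel term. Once this is done, taking the supremum over $\tau$, absorbing the $\epsilon$-terms, and normalizing all integrals by $|Q_R^\lambda|\simeq|Q^\lambda_\rho|\cdot(R/\rho)^{d+2}$ yields \eqref{e.caccioppoli}.
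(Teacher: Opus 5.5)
Your proposal follows the same route as the paper, which defers to the standard argument starting from the mollified formulation (Lemma~\ref{dfn u-h}): test with $\eta^p\zeta(u-k)$, apply a Di Castro--Kuusi--Palatucci type algebraic inequality on $B_R\times B_R$, and put the off-diagonal interaction into a tail term. The time part, the local part and the $B_R\times B_R$ part are fine. You do mix the test functions $\eta^p\zeta(\llbracket u\rrbracket_h-k)$ and $\eta^p\zeta(u-k)$; either works, but the inequality you write is the one needed for $u-k$, whereas with $\llbracket u\rrbracket_h-k$ the time term is an exact derivative.

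The one step that does not close as written is the $|w(x)|^p$ contribution on $B_R\times(\R^d\setminus B_R)$, created by your crude bound $(|w(x)|^{p-1}+|w(y)|^{p-1})|w(x)|$. None of your three justifications handles it.
\begin{itemize}
\item By \eqref{formula} it equals $C(R-\rho)^{-sp}\int_{B_R}|w|^p$, not $CR^{sp}\cdots$. This is not dominated by the first term on the right unless $R-\rho\le 1$.
\item The pointwise bound $|x-y|^{-d-sp}\le C(R-\rho)^{-p}|x-y|^{(1-s)p-d}$ cannot be integrated over $y\in\R^d\setminus B_R$, because that integral diverges. The fourth term only involves $y\in B_R$.
\item The sup-tail term carries $|w(y)|^{p-1}$ with $y\notin B_R$, so it cannot absorb $|w(x)|^p$.
\end{itemize}

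The clean fix is a sharper sign argument. For all $a,b\in\R$,
\[
|a-b|^{p-2}(a-b)\,a\ \geq\ -|b|^{p-1}|a|\,.
\]
Indeed, for $a>0$ the left side is negative only if $b>a$, and then $0<b-a<|b|$; the case $a<0$ is symmetric. Taking $a=w(x)$ and $b=w(y)$, no $|w(x)|^p$ term appears, and you land exactly on the last term of \eqref{e.caccioppoli}.

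Alternatively, keep your bound and dominate $(R-\rho)^{-sp}\dashint_{B_R}|w|^p$ by the fourth term. This uses $\int_{B_R}|x-y|^{(1-s)p-d}\d{y}\geq c(d,s,p)R^{(1-s)p}\geq c\,(R-\rho)^{(1-s)p}$ for every $x\in B_R$.

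Finally, the factor $|Q_R^\lambda|/|Q_\rho^\lambda|=(R/\rho)^{d+2}$ from your last normalization is not a data constant. So the averaged form holds with $C=C(\data)$ only when $R/\rho$ is bounded, say $R\le 2\rho$. This imprecision is already in the statement itself. It is harmless where the lemma is used, namely $\rho/2\le r_1<r_2\le\rho$ in Lemma~\ref{t.Reverse Holder}.
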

\begin{proof}
The starting point is Lemma~\ref{dfn u-h}. Using the property of the mollifier introduced in Lemma~\ref{t.mollifier}, the proof is almost a verbatim repetition of the proof of~\cite[Proposition 3.1]{Nak22a}. We omit the details.
\end{proof}

\section{Parabolic Sobolev-Poincar\'{e} inequalities}\label{Sect.3}
In this section, we establish certain parabolic Sobolev-Poincar\'{e} inequalities. Generally, weak solutions of the parabolic equations are not necessarily differentiable with respect to time, and thus one cannot directly apply a Sobolev-Poincar\'{e} inequality in $\R^{d+1}$. Hence our task is to establish a new type of Sobolev-Poincar\'{e} inequality by means of the Gluing lemma, taking into account the lack of differentiability with respect to time.

\subsection{Gluing lemma}\label{Sect.3.1}
We start by establishing the Gluing lemma, which gives an error estimate for time differences of $(u(t))_{B_{\bar{\rho}}}$ for some $\bar{\rho}$.

\begin{lemma}[Gluing lemma]\label{t.gluing}
Fix $p >1$ and $s \in (0,1)$ and let $\theta:=\frac{(1-s)p}{p-1}$. Let $u$ be a weak solution to~\eqref{maineq} in the sense of Definition~\ref{dfn u} under the assumptions~(A1)--(A4).  Further, suppose that $F \in L^p_{\mathrm{loc}}\left(\Omega_T\,;\R^d\right)$. Then, for every space-time cylinder $Q_\rho^{\lambda}(z_0) \Subset \Omega_T$ and $t_1,t_2 \in I_\rho^\lambda(t_0) \subset (0,T)$ with $t_1<t_2$, there exists a radius $\bar{\rho} \in \left[\frac{\rho}{2}, \frac{3\rho}{4}\right]$ such that, for a constant $C(\data)<\infty$,
\begin{align*}
\Big|(u(t_2))_{B_{\bar{\rho}}(x_0)}-(u(t_1))_{B_{\bar{\rho}}(x_0)}\Big| &\leq C\lambda^{2-p}\rho \,\dashint_{Q_\rho^{\lambda}(z_0)}\bigl(|\nabla u|^{p-1}+|F|^{p-1}\bigr)\d{x}\d{t}\\
&\quad  \quad \quad +C\lambda^{2-p}\rho \left(\rho^\theta \dashint_{Q_\rho^\lambda(z_0)}\cE[u]\d{x}\d{t}\right)^{\nicefrac{(p-1)}{p}}\,.
\end{align*}
\end{lemma}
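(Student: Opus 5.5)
We test the mollified weak formulation (Lemma~\ref{dfn u-h}) with $\varphi(x,t)=\eta(x)\zeta_\epsilon(t)$. Here $\eta\in C_0^\infty(B_{\bar\rho}(x_0))$ is a radial cut-off, and $\zeta_\epsilon$ is a Lipschitz function of time that equals $1$ on $[t_1,t_2]$ and vanishes outside $(t_1-\epsilon,t_2+\epsilon)$, linear in between. Following the standard argument (e.g.\ as in~\cite{BDKS20}), we first pass $h\searrow0$ using Lemma~\ref{t.mollifier}, and then $\epsilon\searrow0$ using $u\in C([0,T];L^2(\Omega))$. This yields
\begin{align*}
\Big|\int_{B_\rho}\eta\,(u(t_2)-u(t_1))\d{x}\Big|
&\le \int_{t_1}^{t_2}\!\int_{B_\rho}\bigl(|\mathbf a(x,t,u,\nabla u)|+|F|^{p-1}\bigr)|\nabla\eta|\d{x}\d{t}\\
&\quad+\int_{t_1}^{t_2}\bigl|\mathcal B_K(u(t),\eta)\bigr|\d{t}.
\end{align*}
By (A2), the local part is bounded by $C\|\nabla\eta\|_\infty\int_{Q_\rho^\lambda}(|\nabla u|^{p-1}+|F|^{p-1})$. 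Since $|Q_\rho^\lambda|\simeq\lambda^{2-p}\rho^{d+2}$, dividing by $|B_{\bar\rho}|\simeq\rho^d$ and using $\|\nabla\eta\|_\infty\lesssim 1/\rho$ gives exactly $C\lambda^{2-p}\rho\dashint_{Q^\lambda_\rho}(\ldots)$.

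The role of the free radius $\bar\rho$ is to replace $\int\eta(u(t_2)-u(t_1))$ by the average difference over a ball. The natural route is the following. We choose $\eta=\eta_\epsilon$ approximating $\indc_{B_r}$, with $\eta_\epsilon=1$ on $B_r$, vanishing outside $B_{r+\epsilon}$, and $|\nabla\eta_\epsilon|\le1/\epsilon$. After first integrating the resulting estimate in $r$ over $[\rho/2,3\rho/4]$ (equivalently, using the radial cut-off $\eta(x)=\psi(|x-x_0|)$ with $\psi$ piecewise linear between $\rho/2$ and $3\rho/4$), we obtain an averaged bound. We then pick $\bar\rho$ via a mean value argument: the function $r\mapsto\bigl|\int_{B_r}(u(t_2)-u(t_1))\bigr|$ has average over $[\rho/2,3\rho/4]$ controlled by the right-hand side, so some $\bar\rho$ realizes that bound. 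Alternatively, one can simply take $\eta$ with $\eta=1$ on $B_{\rho/2}$ and $\bar\rho$ accordingly, and correct the difference $\int\eta(\cdots)$ versus $(\cdot)_{B_{\bar\rho}}$ by the same slicing. I will follow the slicing approach, and since the statement only asks for existence of $\bar\rho$, the precise choice is harmless.

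The main obstacle is the nonlocal term. By symmetry (A3) and (A4), together with Hölder's inequality with exponents $p/(p-1)$ and $p$,
\[
|\mathcal B_K(u(t),\eta)|\le\Lambda\int_{\R^d}\!\int_{\R^d}\frac{|u(x)-u(y)|^{p-1}|\eta(x)-\eta(y)|}{|x-y|^{d+sp}}\d y\d x.
\]
Since $\eta(x)-\eta(y)$ vanishes unless $x$ or $y$ lies in $B_{3\rho/4}$, we may by symmetry restrict to $x\in B_{\rho}$ up to a factor $2$. We then split the weight as $|x-y|^{-(d+sp)(p-1)/p}\cdot|x-y|^{-(d+sp)/p}$ and bound
\[
\int_{\R^d}\frac{|\eta(x)-\eta(y)|^p}{|x-y|^{d+sp}}\d y\le C\rho^{-p}\int_{B_\rho}\frac{\d y}{|x-y|^{d+sp-p}}+C\int_{\R^d\setminus B_\rho(x)}\frac{\d y}{|x-y|^{d+sp}}\le C\rho^{-sp},
\]
using~\eqref{formula}. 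This produces
\[
\int_{B_\rho}\cE[u](x,t)^{\frac{p-1}{p}}\rho^{-s}\d x.
\]
Integrating over $I^\lambda_\rho$, applying Hölder again in $(x,t)$, and dividing by $\rho^d$ gives
\[
C\lambda^{2-p}\rho^{2-s}\Bigl(\dashint_{Q_\rho^\lambda}\cE[u]\Bigr)^{\frac{p-1}{p}}.
\]
Finally, $\rho^{2-s}=\rho\cdot\rho^{1-s}=\rho\cdot(\rho^\theta)^{(p-1)/p}$ because $\theta(p-1)/p=1-s$, which is exactly the claimed nonlocal term. The only delicate point is justifying the limit passages $h,\epsilon\to0$ in the nonlocal form. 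For this we use $u\in L^p(0,T;L^p_{sp})$ together with the above kernel bound, which ensures integrability and allows dominated convergence.
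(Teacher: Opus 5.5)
Your proof is correct for the lemma as literally stated, and it follows essentially the paper's route. The paper also tests with $\psi_{\delta,r}(x)\zeta_\eps(t)$ and integrates the resulting inequality in $r$ over $J=[\rho/2,3\rho/4]$. It handles the nonlocal form through $\int_J|\psi_{\delta,r}(x)-\psi_{\delta,r}(y)|\d{r}\le\rho\wedge|x-y|$ followed by H\"older's inequality. This is the same computation as your Lipschitz bound for the averaged piecewise linear cut-off: averaging in $r$ commutes with the absolute values because every $\psi_{\delta,r}$ is radially nonincreasing. It produces the same factor $\rho^{1-s}=(\rho^\theta)^{(p-1)/p}$.

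The one place where you differ, and where the paper proves something stronger, is the choice of $\bar\rho$. You pick it by a mean value argument on $r\mapsto\bigl|\int_{B_r}(u(t_2)-u(t_1))\d{x}\bigr|$. With the single averaged cut-off you only control the signed $r$-average, so there you would use the intermediate value theorem instead. Either way, your $\bar\rho$ depends on $(t_1,t_2)$.

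The paper instead applies the pigeonhole principle to the right-hand side $\mathbf{I}_\delta(r)+\mathbf{II}_\delta(r)$. Its time integrals run over all of $I_\rho^\lambda$, so it does not involve $t_1,t_2$. The paper then lets $\delta_j\to0$ along a subsequence with $r_{\delta_j}\to\bar\rho$ and $\psi_{\delta_j,r_{\delta_j}}\to\indc_{B_{\bar\rho}}$ a.e. This gives one radius valid for all pairs $t_1<t_2$ at once.

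That uniformity is what is used afterwards. Lemma~\ref{t.gluing application}, Lemma~\ref{SP1} and Lemma~\ref{t.p-tail-controlled inequality} all work with $(u(t))_{B_{\bar\rho}}$ for every $t\in I_\rho^\lambda$, with a single fixed $\bar\rho$, and your version would not justify that. The fix is to make the same switch as the paper. Keep the sliced cut-offs $\psi_{\delta,r}$, bound each slice by the $t$-independent quantities over $I_\rho^\lambda$, and choose $r$ by pigeonholing those rather than the left-hand side.
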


\begin{proof}
Without loss of generality, we may assume that $z_0=(x_0,t_0)=(0,0)$. 
Let $t_1,t_2 \in I_\rho^{\lambda}$ be such that $t_1<t_2$ and take $r \in [\rho/2,3\rho/4]=:J$. For sufficiently small $\eps>0$, we introduce $\zeta_\eps \in C_c^1(0,T)$ to be a Lipschitz approximation of $\indc_{\{t_1 \leq t \leq t_2\}}$, that is, $\zeta_\eps=1$ on $(t_1,t_2)$, $\zeta_\eps=0$ on $(-\lambda^{2-p}\rho^2, t_1-\eps] \cup [t_2+\eps, \lambda^{2-p}\rho^2)$, while linearly interpolated otherwise.
Furthermore, for any $\delta \in (0,\rho/8)$ and $r \in (0,\rho)$, we define the following radial function $\psi_{\delta, r}$ by
\[
\psi_{\delta, r}(x):=
\left\{
\begin{aligned}
& 1 &\mbox{if}& \quad 0\leq |x| \leq r\,, \\
& 1+\tfrac{1}{\delta}(r-|x|) &\mbox{if} & \quad r < |x| \leq r+\delta, \quad \mbox{and} 
\\ 
& 0  &\mbox{if} &\quad r+\delta \leq |x| \leq \rho\,.
\end{aligned}
\right.
\]
Testing~\eqref{weak form} with $\varphi=\psi_{\delta, r}(x)\zeta_\eps(t)$, we get
\begin{align*}
-\dashint_{t_1-\eps}^{t_1}&\int_{\R^d}u(t)\psi_{\delta,r}(x)\d{x}\d{t}+\dashint_{t_2}^{t_2+\eps}\int_{\R^d}u(t)\psi_{\delta, r}(x)\d{x}\d{t} \\
&+\int_{I_\rho^{\lambda}} \cB \left(u(t), \psi_{\delta, r}\zeta_\eps(t)\right)\d{t}\\
&=\int_{I_\rho^{\lambda}}\left(\int_{B_{r+\delta}}\Bigl[-\mathbf{a}(x,t,u,\nabla u)+|F|^{p-2}F\Bigr]\cdot \nabla \psi_{\delta,r}(x)\d{x}\right)\zeta_\eps(t)\d{t}\,.
\end{align*}
%
Sending $\eps \to 0$ in the above display, we find that
\begin{align*}
-\int_{\R^d}u(t_1)&\psi_{\delta,r}(x)\d{x}\d{t}+\int_{\R^d}u(t_2)\psi_{\delta, r}(x)\d{x}\d{t} \\
&+\int_{t_1}^{t_2} \cB \left(u(t), \psi_{\delta, r}(\cdot)\right)\d{t}\\
&=\int_{t_1}^{t_2}\left(\int_{B_{\rho}}\Bigl[-\mathbf{a}(x,t,u,\nabla u)+|F|^{p-2}F\Bigr]\cdot \nabla \psi_{\delta,r}(x)\d{x}\right)\d{t}\,,
\end{align*}
and therefore (A2) yields
\begin{align}\label{e.gruing1}
\frac{1}{|Q_\rho^\lambda|}&\left|\int_{\R^d}\bigl(u(t_2)-u(t_1)\bigr)\psi_{\delta, r}\left(x\right)\d{x}\right| \notag\\
&\leq \frac{1}{|Q_\rho^\lambda|}\int_{I_\rho^{\lambda}}\left(\int_{B_{\rho}}\Bigl[L|\nabla u|^{p-1}+|F|^{p-1}\Bigr]\cdot \bigl|\nabla \psi_{\delta,r}(x)\bigr|\d{x}\right)\d{t} \notag\\
&\quad \quad +\frac{1}{|Q_\rho^\lambda|}\int_{I_\rho^{\lambda}}\left|\cB \bigl(u(t), \psi_{\delta, r}\bigr)\right|\d{t} \notag\\
&=:\mathbf{I}_\delta(r)+\mathbf{II}_\delta(r)\,.
\end{align}
Using Fubini's theorem, we estimate that
\begin{align*}
\int_J\mathbf{I}_\delta(r)\d{r} &\leq c(L) \dashint_{Q_\rho^\lambda} \bigl(|\nabla u|^{p-1}+|F|^{p-1}\bigr)\underbrace{\left(\int_J \bigl|\nabla \psi_{\delta,r}(x)\bigr|\d{r}\right)}_{\leq 1}\d{x}\d{t} \\
&\leq c(L) \dashint_{Q_\rho^\lambda} \bigl(|\nabla u|^{p-1}+|F|^{p-1}\bigr)\d{x}\d{t}\,.
\end{align*}
Next, observe that, for every $x, y \in \R^d$,
\begin{equation}\label{e.gruing2}
\int_J\left|\psi_{\delta, r}(x)-\psi_{\delta, r}(y)\right|\d{r} \leq \left(\rho \wedge |x-y|\right).
\end{equation}
Indeed, by $0\leq \psi_{\delta, r} \leq 1$, it readily follows that
\[
\int_J\left|\psi_{\delta, r}(x)-\psi_{\delta, r}(y)\right|\d{r} \leq 2|J|<\rho\,.
\]
On the other hand, the fundamental theorem of calculus yields
\begin{align*}
\left|\psi_{\delta, r}(x)-\psi_{\delta, r}(y)\right|
&=\left|\int_0^1 (x-y)\cdot \nabla \phi_{\delta, r}(\tau x+(1-\tau)y)\d{\tau}\right|\\
&\leq |x-y| \int_0^1\left|\nabla \phi_{\delta, r}(\tau x+(1-\tau)y)\right| \d{\tau}\,,
\end{align*}
and therefore, by Fubini's theorem we find that
\begin{align*}
\int_J\left|\psi_{\delta, r}(x)-\psi_{\delta, r}(y)\right|\d{r} &\leq |x-y| \int_0^1 \left(\int_J \left|\nabla \phi_{\delta, r}(\tau x+(1-\tau)y)\right| \d{r}\right)\d{\tau} \leq |x-y|\,.
\end{align*}
Combining these yields~\eqref{e.gruing2}. 
Note that, since $\supp \psi_{\delta, r} \subset B_\rho$, by the symmetry of $K$ as in~(A3), we have that
\begin{equation*}
\left|\cB \bigl(u(t), \psi_{\delta, r}\bigr) \right| \leq  2\Lambda \int_{B_\rho}\int_{\R^d} \frac{\bigl|u(x,t)-u(y,t)\bigr|^{p-1}\bigl|\psi_{\delta,r}(x)-\psi_{\delta,r}(y)\bigr|}{|x-y|^{d+sp}}\d{y}\d{x}\,.
\end{equation*}
Thus, appealing to H\"{o}lder's inequality with $\left(\frac{p}{p-1}, p\right)$, we obtain that, using~\eqref{e.gruing2}
\begin{align*}
\int_J\mathbf{II}_\delta(r)\d{r} &\leq C\left(\dashint_{Q_\rho^\lambda}\cE[u]\d{x}\d{t}\right)^{\nicefrac{(p-1)}{p}}\left(\dashint_{Q_\rho^\lambda}\int_{\R^d} \frac{\left(\rho \wedge |x-y|\right)^p}{|x-y|^{d+sp}}\d{y}\d{x}\d{t}\right)^{\nicefrac{1}{p}}\\
&=C \left(\rho^\theta \dashint_{Q_\rho^\lambda}\cE[u]\d{x}\d{t}\right)^{\nicefrac{(p-1)}{p}}
\end{align*}
for a constant $C(d,s,p,\Lambda)<\infty$. Thus, by the pigeonhole principle, there exists $r_\delta \in J$ such that 
\begin{equation*}
\mathbf{I}_\delta(r_\delta)+\mathbf{II}_\delta(r_\delta) \leq \frac{2}{|J|}\int_J \bigl(\mathbf{I}_\delta(r)+\mathbf{II}_\delta(r)\bigr)\d{r}\,.
\end{equation*}
Note that $r_\delta \in J$ is independent of the choice of $t_1$ and $t_2$. Combining these with~\eqref{e.gruing1} yields, for a constant $C(\data)<\infty$, 
\begin{align}\label{e.gruing3}
\frac{1}{|Q_\rho^\lambda|}&\left|\int_{\R^d}\bigl(u(t_2)-u(t_1)\bigr)\psi_{\delta, r_\delta}\left(x\right)\d{x}\right| \notag\\
&\leq \frac{C}{|J|}\dashint_{Q_\rho^\lambda} \bigl(|\nabla u|^{p-1}+|F|^{p-1}\bigr)\d{x}\d{t}+\frac{C}{|J|}\left(\rho^\theta \dashint_{Q_\rho^\lambda}\cE[u]\d{x}\d{t}\right)^{\nicefrac{(p-1)}{p}}\,.
\end{align}
Select $\delta_j \downarrow 0$ as $j \to \infty$. Since $r_j:=r_{\delta_j} \in J$, passing to a not relabelled subsequence, there exists $\bar{\rho} \in J$ such that $r_j \to \bar{\rho}$ as $j \to \infty$ and thus, for almost every $x, y \in \R^d$, we find that
\[
\psi_{\delta_j,r_j} \to \indc_{B_{\bar{\rho}}}\,.
\]
Since $ 0 \leq \psi_{\delta_j,r_j} \leq 1$, all these functions are supported in $B_\rho$ and $u(t) \in L^2(B_\rho)$, in view of~\eqref{e.class}, we have, for each $t \in I_\rho^\lambda$, 
\[
\int_{\R^d}u(t)\psi_{\delta_j,r_j}(x)\d{x} \rightarrow \int_{B_{\bar{\rho}}}u(t)\d{x}
\]
in the limit $j \to \infty$. Hence, appealing to~\eqref{e.gruing3} with $\delta=\delta_j$ and $r=r_j$, and passing to the limit $j \to \infty$ in the resulting display, we obtain that, for every $t_1 <t_2$ in $I_\rho^\lambda$, 
\begin{align*}
\frac{|B_{\bar{\rho}}|}{|Q_\rho^\lambda|}&\left|\dashint_{B_{\bar{\rho}}}\bigl(u(t_2)-u(t_1)\bigr)\d{x}\right| \\
&\leq \frac{4C}{\rho}\dashint_{Q_\rho^\lambda}\bigl(|\nabla u|^{p-1}+|F|^{p-1}\bigr)\d{x}\d{t}+\frac{4C}{\rho}\left(\rho^\theta \dashint_{Q_\rho^\lambda}\cE[u]\d{x}\d{t}\right)^{\nicefrac{(p-1)}{p}}\,.
\end{align*}
Finally, rearranging yields the desired conclusion.
\end{proof}
As a consequence of this lemma, we deduce the following:
\begin{lemma}\label{t.gluing application}
Under the assumption of Lemma~\ref{t.gluing}, let $\kappa$ be an exponent so that $q_\ast /p \leq \kappa \leq 1$. Let $\bar{\rho} \in \bigl[\nicefrac{\rho}{2},\nicefrac{3\rho}{4}\bigr]$ be as in Lemma~\ref{t.gluing}. There exists $C(\data)<\infty$ such that
\begin{align}\label{e.gluing application ineq}
\begin{split}
&\dashint_{I_\rho^\lambda(t_0)}\dashint_{I_\rho^\lambda(t_0)}\Big|(u(t_2))_{B_{\bar \rho}(x_0)}-(u(t_1))_{B_{\bar \rho}(x_0)}\Big|^{\kappa p}\d{t}_1\d{t}_2 \\
&\quad \leq C (\lambda^{2-p}\rho)^{\kappa p}\left[\left(\dashint_{Q_\rho^\lambda(z_0)}\big[|\nabla u|^{\kappa p}+|F|^{\kappa p}\big]\d{x}\d{t}\right)^{p-1}+\left(\rho^\theta \dashint_{Q_\rho^\lambda}\cE[u]\d{x}\d{t}\right)^{\kappa (p-1)}\right]\,.
\end{split}
\end{align}
\end{lemma}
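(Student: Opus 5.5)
The plan is to take the pointwise-in-time estimate of Lemma~\ref{t.gluing}, raise it to the power $\kappa p$, and average over $(t_1,t_2)\in I_\rho^\lambda(t_0)\times I_\rho^\lambda(t_0)$. The essential point is that the radius $\bar\rho$ from Lemma~\ref{t.gluing} does not depend on $t_1,t_2$, so one fixed ball $B_{\bar\rho}(x_0)$ works for all pairs of times. The right-hand side of Lemma~\ref{t.gluing} does not depend on $t_1,t_2$ either. Hence the double average in \eqref{e.gluing application ineq} is bounded by the $\kappa p$-th power of that right-hand side. The case $t_1>t_2$ follows by symmetry, and the diagonal has measure zero.

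Next I will use $\kappa p\ge q_\ast\ge 1$ and the elementary inequality $(a+b)^{\kappa p}\le 2^{\kappa p-1}(a^{\kappa p}+b^{\kappa p})$. This gives
\[
C(\lambda^{2-p}\rho)^{\kappa p}\Big[\Big(\dashint_{Q_\rho^\lambda}\big(|\nabla u|^{p-1}+|F|^{p-1}\big)\Big)^{\kappa p}+\Big(\rho^\theta\dashint_{Q_\rho^\lambda}\cE[u]\Big)^{\kappa(p-1)}\Big].
\]
The nonlocal term already has the form required in \eqref{e.gluing application ineq}, since $\big((\cdot)^{(p-1)/p}\big)^{\kappa p}=(\cdot)^{\kappa(p-1)}$.

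For the local term I will use the other half of $\kappa p\ge q_\ast$, namely $\kappa p\ge p-1$. By H\"older's inequality with exponent $\kappa p/(p-1)\ge1$,
\[
\dashint_{Q_\rho^\lambda}|\nabla u|^{p-1}\le\Big(\dashint_{Q_\rho^\lambda}|\nabla u|^{\kappa p}\Big)^{\frac{p-1}{\kappa p}},
\]
and the same holds for $F$. Set $a:=(p-1)/(\kappa p)\in(0,1]$. By subadditivity of $t\mapsto t^a$, we have $A^a+B^a\le 2(A+B)^a$. Raising to the power $\kappa p$ then yields $\big(\dashint_{Q_\rho^\lambda}(|\nabla u|^{\kappa p}+|F|^{\kappa p})\big)^{p-1}$, up to a constant depending only on $p$.

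No real obstacle is expected. The only point that needs care is the bookkeeping of the lower bound $\kappa\ge q_\ast/p$. Its two parts are used separately: $\kappa p\ge1$ is needed for convexity of $t\mapsto t^{\kappa p}$, and $\kappa p\ge p-1$ is needed for the H\"older step. All constants depend only on $\data$, since $\kappa\in[q_\ast/p,1]$ lies in a compact range.
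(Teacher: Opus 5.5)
Your proposal is correct and follows the paper's own argument: you raise the estimate of Lemma~\ref{t.gluing}, with its $(t_1,t_2)$-independent radius $\bar\rho$, to the power $\kappa p$ and average over $(t_1,t_2)$; then Hölder's inequality with exponent $\kappa p/(p-1)\ge 1$ turns $\bigl(\dashint_{Q_\rho^\lambda}(|\nabla u|^{p-1}+|F|^{p-1})\bigr)^{\kappa p}$ into $\bigl(\dashint_{Q_\rho^\lambda}(|\nabla u|^{\kappa p}+|F|^{\kappa p})\bigr)^{p-1}$. One small slip: the inequality $A^a+B^a\le 2(A+B)^a$ follows from monotonicity of $t\mapsto t^a$, since each term is at most $(A+B)^a$, and not from subadditivity, which gives the reverse inequality $(A+B)^a\le A^a+B^a$.
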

\begin{proof}
Note that $\kappa p \geq p-1$. By applying the H\"{o}lder inequality and Lemma~\ref{t.gluing}, we find that,
\begin{align*}
&\dashint_{I_\rho^\lambda(t_0)}\dashint_{I_\rho^\lambda(t_0)}\Big|(u(t_2))_{B_{\bar \rho}(x_0)}-(u(t_1))_{B_{\bar \rho}(x_0)}\Big|^{\kappa p}\d{t}_1\d{t}_2 \\[2mm]
&\leq C\left(\lambda^{2-p}\rho\right)^{\kappa p} \left[\left(\dashint_{Q_\rho^{\lambda}(z_0)}\big[|\nabla u|^{p-1}+|F|^{p-1}\big]\d{x}\d{t}\right)^{\kappa p} +\left(\rho^\theta \dashint_{Q_\rho^\lambda}\cE[u]\d{x}\d{t}\right)^{\kappa (p-1)} \right]\\[2mm]
&\leq C\left(\lambda^{2-p}\rho\right)^{\kappa p} \left[\left(\dashint_{Q_\rho^{\lambda}(z_0)}\big[|\nabla u|^{\kappa p}+|F|^{\kappa p}\big]\d{x}\d{t}\right)^{p-1}+\left(\rho^\theta \dashint_{Q_\rho^\lambda}\cE[u]\d{x}\d{t}\right)^{\kappa (p-1)}\right]\,.
\end{align*}
The proof of Lemma~\ref{t.gluing application} is now complete.
\end{proof}

\subsection{Parabolic Sobolev-Poincar\'{e} inequalities}\label{Sect.3.2}
We henceforth use the \emph{excess functional} in an intrinsic cylinder $Q_\rho^{\lambda}(z_0)$ defined by
\[
\exc_q\left(u, Q_\rho^{\lambda}(z_0)\right):=\left(\dashint_{Q_\rho^{\lambda}(z_0)}\bigl|u-(u)_{Q_\rho^{\lambda}(z_0)}\bigr|^q\d{x}\d{t}\right)^{\nicefrac{1}{q}}\,.
\]

First, we deduce a basic parabolic Poincar\'{e} type inequality. See also the recent and interesting paper on self-improving properties for parabolic Poincar\'{e} inequality~\cite{KK26}. 
\begin{lemma}[Poincar\'{e} inequality]\label{SP1}
Under the assumption of Lemma~\ref{t.gluing}, let $\kappa$ be an exponent so that $q_\ast/p \kappa \leq 1$. There exists a constant $C(\data)<\infty$ such that, for every space-time cylinder $Q_\rho^\lambda (z_0) \Subset \Omega_T$ with $0<\rho \leq 1$ and $\lambda>0$ being a given scaling parameter, we have
\begin{align}\label{SP1display}
\left[\frac{\exc_{\kappa p}\left(u, Q_\rho^\lambda(z_0)\right)}{\rho}\right]^{\kappa p}&\leq C\lambda^{(2-p)\kappa p}\left[\left(\dashint_{Q_\rho^{\lambda}(z_0)}\bigl(|\nabla u|^{\kappa p}+|F|^{\kappa p}\bigr)\d{x}\d{t}\right)^{p-1} \right.\notag\\
&\quad \quad \quad \quad \quad \quad \quad \quad \quad \left.+\left(\rho^\theta \dashint_{Q_\rho^\lambda}\cE[u]\d{x}\d{t}\right)^{\kappa (p-1)}\right]\,.\end{align}
\end{lemma}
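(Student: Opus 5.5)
The plan is to split the excess into a purely spatial oscillation and a temporal oscillation of slice averages. The temporal part is then controlled by Lemma~\ref{t.gluing application}. Throughout, put $z_0=0$, write $I:=I_\rho^\lambda$ and $B:=B_\rho$, and let $\bar\rho\in[\rho/2,3\rho/4]$ be the radius from Lemma~\ref{t.gluing}. Since $\kappa p\ge q_\ast\ge 1$, Lemma~\ref{t.useful lemma} applies in $L^{\kappa p}$.

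\emph{Step 1 (change of mean).} Let $c:=\dashint_I (u(t))_{B_{\bar\rho}}\d t$. Apply Lemma~\ref{t.useful lemma} with the constant $c$ in place of $(v)_V$; its proof works verbatim for any constant. This gives
\[
\exc_{\kappa p}(u,Q_\rho^\lambda)^{\kappa p}\le 2^{\kappa p}\dashint_{Q_\rho^\lambda}|u-c|^{\kappa p}\d x\d t .
\]
Insert $\pm (u(t))_{B_{\bar\rho}}$ and use $(a+b)^{\kappa p}\le 2^{\kappa p-1}(a^{\kappa p}+b^{\kappa p})$. This leaves two terms:
\[
\mathrm{S}:=\dashint_I\dashint_B|u(x,t)-(u(t))_{B_{\bar\rho}}|^{\kappa p}\d x\d t
\quad\text{and}\quad
\mathrm{T}:=\dashint_I|(u(t))_{B_{\bar\rho}}-c|^{\kappa p}\d t .
\]

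\emph{Step 2 (temporal term).} By Jensen, $\mathrm T$ is bounded by the double time average $\dashint_I\dashint_I|(u(t_2))_{B_{\bar\rho}}-(u(t_1))_{B_{\bar\rho}}|^{\kappa p}\d t_1\d t_2$. Lemma~\ref{t.gluing application} bounds this by
\[
C(\lambda^{2-p}\rho)^{\kappa p}\Bigl[\Bigl(\dashint_{Q_\rho^\lambda}\bigl(|\nabla u|^{\kappa p}+|F|^{\kappa p}\bigr)\Bigr)^{p-1}+\Bigl(\rho^\theta\dashint_{Q_\rho^\lambda}\cE[u]\Bigr)^{\kappa(p-1)}\Bigr].
\]
After dividing by $\rho^{\kappa p}$, this is exactly the right-hand side of \eqref{SP1display}.

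\emph{Step 3 (spatial term).} For a.e.\ $t$, Lemma~\ref{t.useful lemma} with $V=B_{\bar\rho}\subset U=B_\rho$ gives $|U|/|V|\le 2^d$. The classical Poincaré inequality on $B_\rho$ then yields
\[
\mathrm S\le C\rho^{\kappa p}\dashint_{Q_\rho^\lambda}|\nabla u|^{\kappa p}\d x\d t .
\]
After division by $\rho^{\kappa p}$, this contribution is $C\dashint|\nabla u|^{\kappa p}$. For $p=2$ it coincides with $\lambda^{(2-p)\kappa p}(\dashint|\nabla u|^{\kappa p})^{p-1}$, and Steps 1--3 prove \eqref{SP1display} directly.

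\emph{Main obstacle.} For $p\neq 2$ and arbitrary $\lambda>0$, the bound from Step 3 is not of the stated form. The quantity $\dashint|\nabla u|^{\kappa p}$ carries no factor of $\lambda$, whereas \eqref{SP1display} has the single prefactor $\lambda^{(2-p)\kappa p}$ and the power $p-1$ on the gradient average. A direct comparison of $\dashint|\nabla u|^{\kappa p}$ with $\lambda^{(2-p)\kappa p}(\dashint|\nabla u|^{\kappa p})^{p-1}$ only holds under a relation such as $\lambda^{\kappa p}\simeq\dashint|\nabla u|^{\kappa p}$. That is an intrinsic restriction, and the statement as worded does not impose it.

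What I would try first is to avoid a purely spatial Poincaré step. I would run the gluing construction of Lemma~\ref{t.gluing} also in space, testing with $\psi_{\delta,r}$ centred at points of $B$, so that every oscillation of $u$ is routed through the equation and picks up the factor $\lambda^{2-p}\rho$. If that fails, the honest outcome of this plan is a clean proof for $p=2$. For general $p$, the spatial term $C\dashint|\nabla u|^{\kappa p}$ would have to appear separately on the right of \eqref{SP1display}; this is the weaker form and does not prove the inequality as worded.
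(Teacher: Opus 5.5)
Your Steps 1--3 are the paper's own argument. The paper splits the excess into the slice deviation $\mathbf{I}$, the time oscillation of the slice means $\mathbf{II}$ and the mean comparison $\mathbf{III}$. It bounds $\mathbf{I}$ and $\mathbf{III}$ by $C\dashint_{Q_\rho^\lambda}|\nabla u|^{\kappa p}\d x\d t$ via Lemma~\ref{t.useful lemma} and Poincar\'{e}, and bounds $\mathbf{II}$ by Lemma~\ref{t.gluing application}. Your Step~1 simply does the job of $\mathbf{III}$. One small citation slip there: the inequality you need, $\dashint_U|v-(v)_U|^{q}\d x\le 2^{q}\dashint_U|v-c|^{q}\d x$, is the argument of Lemma~\ref{t.useful lemma} with $V=U$ and $c$ replacing $(v)_U$, not $(v)_V$.

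The obstacle you flag is genuine, and the paper does not overcome it. Its concluding ``collecting the preceding estimates'' gives \eqref{SP1display} only with the additional term $C\dashint_{Q_\rho^\lambda}|\nabla u|^{\kappa p}\d x\d t$ on the right, which is exactly your weaker form. That weaker form is also what the paper uses later. The extra term appears explicitly as $C_1\bigl(\dashint_{Q_\rho^\lambda}|\nabla u|^{\kappa p}\d x\d t\bigr)^{1/(\kappa p)}$ in \eqref{e.reverse starting point}. In Lemmas~\ref{SP2} and~\ref{SP3} it is harmless, because a term of the same type is already present from the Gagliardo--Nirenberg and Poincar\'{e} steps. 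So your proof is complete for the statement that is true and actually needed.

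Do not pursue the idea of routing the spatial oscillation through the equation: for $p\neq 2$ the inequality \eqref{SP1display} as worded is false, so no argument can produce it. Take $\mathbf{a}(x,t,u,\xi)=|\xi|^{p-2}\xi$, $\mathds{K}(x,y,t)=|x-y|^{-d-sp}$, and $u(x,t)=\phi(x)$ with $\phi\in C^\infty_c(\Omega)$ not constant on $B_\rho(x_0)$. The functional $\varphi\mapsto\int_\Omega|\nabla\phi|^{p-2}\nabla\phi\cdot\nabla\varphi\d x+\mathcal{B}_K(\phi,\varphi)$ belongs to $W^{-1,p'}(\Omega)$. Hence it equals $\varphi\mapsto\int_\Omega G\cdot\nabla\varphi\d x$ for some $G\in L^{p'}(\Omega;\R^d)$. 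Setting $F:=|G|^{p'-2}G$ gives $|F|^{p-2}F=G$ and $F\in L^p$, so $u$ is a time-independent weak solution.

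For this $u$, the left side of \eqref{SP1display} and every average on the right are independent of $\lambda$. Meanwhile $\lambda^{(2-p)\kappa p}\to 0$ as $\lambda\to\infty$ if $p>2$, and as $\lambda\to 0$ if $p<2$. In both limits the time half-length $\lambda^{2-p}\rho^2$ shrinks, so $Q_\rho^\lambda(z_0)\Subset\Omega_T$ persists. Hence the right side tends to $0$ while the left side is a fixed positive number.
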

\begin{proof}
We may let $z_0=0$ without loss of generality. We begin with decomposing:
\begin{equation}\label{SP1 eq.1}
\left[\frac{\exc_{\kappa p}\left(u, Q_\rho^\lambda \right)}{\rho}\right]^{\kappa p}  \leq C\bigl(\mathbf{I}+\mathbf{II}+\mathbf{III}\bigr)\,,
\end{equation}
where
\begin{equation*}
\left\{
\begin{aligned}
& \mathbf{I}:=\dashint_{Q_\rho^\lambda}\frac{\left|u-(u(t))_{B_{\bar{\rho}}}\right|^{\kappa p}}{\rho^{\kappa p}}\d{x}\d{t} \,, \\
& \mathbf{II}:=\dashint_{Q_\rho^\lambda}\frac{\left|(u(t))_{B_{\bar{\rho}}}-(u(\tau))_{B_{\bar{\rho}}}\right|^{\kappa p}}{\rho^{\kappa p}}\d{x}\d{t}\,, \quad \mbox{and} 
\\ 
& \mathbf{III}:=\dashint_{Q_\rho^\lambda}\frac{\left|(u(\tau))_{B_{\bar{\rho}}}-(u)_{Q_{\rho}}\right|^{\kappa p}}{\rho^{\kappa p}}\d{x}\d{t}\,.
\end{aligned}
\right.
\end{equation*}
In the definition of $\mathbf{I}$--$\mathbf{III}$, the radius $\bar{\rho} \in [\rho/2, 3\rho/4]$ is determined in Lemma~\ref{t.gluing}.

By applying Lemma~\ref{t.useful lemma} and the Poincar\'{e} inequality, 
\[
\mathbf{I} \leq C\dashint_{Q_\rho^\lambda}|\nabla u|^{\kappa p}\d{x}\d{t}\,.
\]
The H\"{o}lder and Poincar\'{e} inequalities and Lemma~\ref{t.useful lemma} yield that 
\begin{align*}
\mathbf{III}&=\frac{1}{\rho^{\kappa p}}\left|\dashint_{Q_\rho^\lambda}\big[u-(u(\tau))_{B_{\bar{\rho}}}\big]\d{x}\d{\tau} \right|^{\kappa p} \\[2mm]
&\leq \frac{1}{\rho^{\kappa p}}\dashint_{Q_\rho^\lambda}\left|u-(u(\tau))_{B_{\bar{\rho}}}\right| ^{\kappa p}\d{x}\d{\tau} \leq C\dashint_{Q_\rho^\lambda}|\nabla u|^{\kappa p}\d{x}\d{t}\,.
\end{align*}
Finally, by applying H\"{o}lder's inequality and Lemma~\ref{t.gluing application}, we observe that
\begin{align*}
\mathbf{II}& =\frac{1}{\rho^{\kappa p}}\dashint_{I_\rho^\lambda}\left|\dashint_{I_\rho^\lambda}\big[(u(t))_{B_{\bar{\rho}}}-(u(\tau))_{B_{\bar{\rho}}}\big]\d{\tau}\right|^{\kappa p}\d{t} \\
& \leq \frac{1}{\rho^{\kappa p}}\dashint_{I_\rho^\lambda} \dashint_{I_\rho^\lambda}\big|(u(t))_{B_{\bar{\rho}}}-(u(\tau))_{B_{\bar{\rho}}}\big|^{\kappa p}\d{t}\d{\tau}\\
& \leq C \lambda^{(2-p)\kappa p}\left[\left(\dashint_{Q_\rho^{\lambda}(z_0)}\big[|\nabla u|^{\kappa p}+|F|^{\kappa p}\big]\d{x}\d{t}\right)^{p-1}+\left(\rho^\theta \dashint_{Q_\rho^\lambda}\cE[u]\d{x}\d{t}\right)^{\kappa (p-1)}\right]
\end{align*}
for some $\bar{\rho} \in \left[\nicefrac{\rho}{2},\nicefrac{3\rho}{4}\right]$ determined in Lemma~\ref{t.gluing}. Collecting the preceding estimates to~\eqref{SP1 eq.1}, the proof of Lemma~\ref{SP1} is complete.
\end{proof}

Before giving the next two parabolic Sobolev-Poincar\'{e} inequalities, we introduce the notion of ``$\lambda$-intrinsic geometry'' as follows:
\begin{definition}[$\lambda$-intrinsic geometry]\label{t.intrinsic}
Let $u$ be a weak solution to~\eqref{maineq} in the sense of Definition~\ref{dfn u}, under the  assumptions~(A1)--(A4) and suppose that $F \in L^p_{\mathrm{loc}}\left(\Omega_T, \R^d\right)$. A space-time cylinder $Q^\lambda_{\rho}(z_0) \Subset \Omega_T$ with $0 <\rho \leq 1$ satisfies \emph {$\lambda$-intrinsic geometry} if and only if there exists a constant $K\geq 1$ such that
\begin{equation}\label{intrinsic 1}
\frac{1}{K}\dashint_{Q_{\rho}^\lambda(z_0)}\big(|\nabla u|^p+|F|^p\big)\d{x}\d{t} \leq \lambda^p \leq K\dashint_{Q_{\rho/2}^\lambda (z_0)}\big(|\nabla u|^p+|F|^p\big)\d{x}\d{t}\,.
\end{equation}
Notice, by a simple calculation, that ~\eqref{intrinsic 1} implies 
\begin{equation}\label{intrinsic 2}
\lambda^p \leq 2^{d+2}K\dashint_{Q_\rho^\lambda(z_0)}\bigl(|\nabla u|^p+|F|^p\bigr)\d{x}\d{t}\,.
\end{equation}
\end{definition}
\begin{lemma}[Sobolev-Poincar\'{e} inequality I]\label{SP2}
Under the assumption of Lemma~\ref{t.gluing}, we further suppose that $Q_\rho^\lambda (z_0) \Subset \Omega_T$ satisfies $\lambda$-intrinsic geometry condition~\eqref{intrinsic 1}--\eqref{intrinsic 2}. There exists a constant $C(\data,K)<\infty$ such that
\begin{align}\label{SP2display}
\left[\frac{\exc_{p}\left(u, Q_\rho^\lambda(z_0)\right)}{\rho}\right]^{p}&\leq \eps \left(\sup_{t \in I_\rho^\lambda(t_0)}\dashint_{B_\rho(x_0)}\frac{\bigl|u(t)-(u)_{Q_\rho^\lambda(z_0)}\bigr|^2}{\lambda^{2-p} \rho^2}\d{x} +\dashint_{Q_\rho^\lambda(z_0)}|\nabla u|^p\d{x}\d{t}\right) \notag \\[2mm]
&\quad +C\eps^{-\bm{\gamma}_{p}}\left[\left(\dashint_{Q_\rho^\lambda(z_0)}|\nabla u|^{\kappa p}\d{x}\d{t}\right)^{\nicefrac{1}{\kappa}} \right. \notag\\[2mm]
&\quad \quad \quad  \left.+\boldsymbol{\mathfrak{T}}_{p, \lambda}(\rho)\left(\rho^\theta \dashint_{Q_\rho^\lambda(z_0)}\cE[u]\d{x}\d{t}\right)+\dashint_{Q_\rho^\lambda(z_0)}|F|^p\d{x}\d{t}\right]
\end{align}
whenever $\eps \in (0,1]$, where 
\[
\bm{\gamma}_{p}:=
\left\{
\begin{aligned}
&\tfrac{(1-\kappa)p}{2-(1-\kappa)p}  &\mbox{if} \quad & p \geq 2\,, \\
&\tfrac{1-\kappa(p-1)}{\kappa(p-1)}  &\mbox{if} \quad & 1<p<2\,
\end{aligned}
\right.
\,\, \mbox{and} \,\,\,
\bm{\mathfrak{T}}_{p, \lambda}(\rho):=
\left\{
\begin{aligned}
&\frac{1}{\lambda^{p \sigma_0}}\left(\rho^\theta \dashint_{Q_\rho^\lambda(z_0)}\cE[u]\d{x}\d{t}\right)^{\sigma_0}  &\mbox{if} \quad & p \geq 2\,, \\
&1  &\mbox{if} \quad & 1<p<2\,,
\end{aligned}
\right.
\]
and $\sigma_0:=\frac{(p-2)(1+\kappa)}{2-(1-\kappa)p}$ for short. The parameter $\kappa$ is chosen so that $\kappa:=\max\left\{\frac{d}{d+2},\frac{q_\ast}{p}\right\}<1$.
\end{lemma}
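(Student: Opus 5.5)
We prove the estimate separately in the two regimes $p\ge 2$ and $1<p<2$. In both cases we use the Gagliardo--Nirenberg inequality (Lemma~\ref{GN}) slice-wise in time, followed by the Poincar\'e inequality of Lemma~\ref{SP1}. We may take $z_0=0$. The first reduction is to replace the mean $(u)_{Q_\rho^\lambda}$ by the slice mean. Set $w(t):=u(t)-(u)_{Q_\rho^\lambda}$. Then Lemma~\ref{t.useful lemma} and the usual Poincar\'e inequality on $B_\rho$ (for $u(t)-(u(t))_{B_\rho}$) reduce matters to estimating $\dashint_{Q_\rho^\lambda}|w/\rho|^p$.

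We apply Lemma~\ref{GN} on $B_\rho$ for a.e.\ $t$ with $\chi_1=p$, $\chi_2=p$ and $\chi_3=2$, with the parameter chosen so that
\[
\dashint_{B_\rho}\Bigl|\frac{w}{\rho}\Bigr|^{p}\le C\Bigl(\dashint_{B_\rho}\Bigl|\frac{w}{\rho}\Bigr|^{\kappa p}+|\nabla u|^{\kappa p}\Bigr)^{\frac{\vartheta}{\kappa}}\Bigl(\dashint_{B_\rho}\frac{|w|^2}{\rho^2}\Bigr)^{\frac{(1-\vartheta)p}{2}} .
\]
Here we use $\chi_2=\kappa p$ instead, and the choice $\kappa\ge d/(d+2)$ is precisely what makes the admissibility condition of Lemma~\ref{GN} hold with $\vartheta=\kappa$-dependent. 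We then take $\sup_t$ of the $L^2$ factor and write
\[
\frac{|w|^2}{\rho^2}=\lambda^{2-p}\cdot\frac{|w|^2}{\lambda^{2-p}\rho^2}.
\]
Integrating in time gives the bound
\[
\lambda^{(2-p)(1-\vartheta)p/2}\,S^{(1-\vartheta)p/2}\,\Bigl(\dashint_{Q}|\nabla u|^{\kappa p}+\bigl[\exc_{\kappa p}/\rho\bigr]^{\kappa p}\Bigr)^{\vartheta/\kappa},
\]
where $S$ denotes the sup-term. The excess $\exc_{\kappa p}$ is controlled by Lemma~\ref{SP1}. Young's inequality with $\eps$ then splits off $\eps S$ (the $\eps|\nabla u|^p$ term absorbs any remaining $\nabla u$ contribution), which leaves a product of powers of $\lambda$ and of the quantities in the brackets. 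The exponent $\bm\gamma_p$ is exactly the Young conjugate exponent produced by this step.

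The key step is removing the powers of $\lambda$. For this we use the intrinsic geometry \eqref{intrinsic 1}--\eqref{intrinsic 2}: $\lambda^p\simeq\dashint(|\nabla u|^p+|F|^p)$, which is bounded above by $\eps\dashint|\nabla u|^p$ plus lower-order terms after a further Young step.

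For $1<p<2$ the factor $\lambda^{(2-p)\kappa p}$ in Lemma~\ref{SP1} has a positive power. We combine it with the $\lambda^{(2-p)}$ coming from the $L^2$ term and use $\lambda^p\le C\dashint(|\nabla u|^p+|F|^p)$. The Young split then puts this on the $\eps$ side and yields $\bigl(\dashint|\nabla u|^{\kappa p}\bigr)^{1/\kappa}$, $\rho^\theta\dashint\cE[u]$ (coefficient $\bm{\mathfrak T}=1$) and $\dashint|F|^p$ with the factor $\eps^{-(1-\kappa(p-1))/(\kappa(p-1))}$.

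For $p\ge2$ the power of $\lambda$ is negative. We therefore use the lower bound $\lambda^p\gtrsim\dashint(|\nabla u|^p+|F|^p)$ only where it is needed and otherwise keep $\lambda$ explicit. The tail term then carries an uncancelled factor, namely the nonlocal energy raised to the power $\kappa(p-1)$ multiplied by a negative power of $\lambda$. Rewriting this factor as $\rho^\theta\dashint\cE[u]$ times $\lambda^{-p\sigma_0}(\rho^\theta\dashint\cE[u])^{\sigma_0}$ produces $\bm{\mathfrak T}_{p,\lambda}$. The main obstacle is the exponent bookkeeping here: checking that the Young exponents in the $p\ge2$ case produce exactly $\sigma_0=\frac{(p-2)(1+\kappa)}{2-(1-\kappa)p}$ and $\bm\gamma_p=\frac{(1-\kappa)p}{2-(1-\kappa)p}$. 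I would first carry out the computation with generic $\vartheta$ and only then fix $\vartheta$ from the Gagliardo--Nirenberg constraint.
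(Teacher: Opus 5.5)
Your outline follows the paper's own proof: slice-wise Gagliardo--Nirenberg with $(\chi_1,\chi_2,\chi_3,\vartheta)=(p,\kappa p,2,\kappa)$ (admissible exactly because $\kappa\ge \frac{d}{d+2}$), the $L^2$ factor rescaled by $\lambda^{2-p}$ and bounded by its supremum, and Lemma~\ref{SP1} for the $\kappa p$-excess. For $p\ge 2$ it then uses the intrinsic lower bound on $\lambda$ plus Young with exponents $\bigl(\frac{2}{(1-\kappa)p},\frac{2}{2-(1-\kappa)p}\bigr)$, the nonlocal piece becoming $\bm{\mathfrak T}_{p,\lambda}(\rho)\,\rho^\theta\dashint_{Q_\rho^\lambda}\cE[u]$, while for $1<p<2$ a three-factor Young puts $\delta\lambda^p$ on the small side before invoking \eqref{intrinsic 2}. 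There are two minor points: your opening ``reduction'' via Lemma~\ref{t.useful lemma} is superfluous, since $\dashint_{Q_\rho^\lambda}|w/\rho|^p$ is already the left-hand side; and for $1<p<2$ the Gagliardo--Nirenberg gradient term carries only $\lambda^{(2-p)(1-\kappa)p/2}$, so it must also be fed into the Young step (the paper first uses the intrinsic bound to turn it into a term of the same form as the one from Lemma~\ref{SP1}).
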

\begin{proof} Without loss of generality, we may let $z_0=0$. Let $\kappa \in (0,1)$ be such that $\kappa =\max\left\{\frac{d}{d+2},\frac{q_\ast}{p}\right\}$.
Appealing to the Gagliardo-Nirenberg inequality (Lemma~\ref{GN}) with
\[
(\chi_1,\,\chi_2,\,\chi_3,\,\vartheta)=(p,\,\kappa p,\,2,\,\kappa),
\]
which is admissible since
\[
-\frac{d}{p} \leq \kappa \left(1-\frac{d}{\kappa p}\right)-(1-\kappa)\frac{d}{2} \quad \iff \quad \kappa \geq \frac{d}{d+2},
\]
we have that
\begin{align}\label{SP2 eq.1}
\left[\frac{\exc_p\left(u, Q_\rho^\lambda\right)}{\rho}\right]^{p} &\leq C \dashint_{I_\rho^\lambda}\left(\dashint_{B_\rho}\left[\left|\frac{u-(u)_{Q_\rho^\lambda}}{\rho}\right|^{\kappa p}+|\nabla u|^{\kappa p}\right]\d{x}\right)  \notag\\
&\quad \quad \quad  \quad \quad \quad \times \left(\dashint_{B_\rho}\left|\frac{u-(u)_{Q_\rho^\lambda}}{\rho}\right|^{2}\d{x}\right)^{\nicefrac{(1-\kappa)p}{2}}\d{t} \notag\\[2mm]
&\leq C\left(\sup_{t \in I_\rho^\lambda}\dashint_{B_\rho}\left|\frac{u(t)-(u)_{Q_\rho^\lambda}}{\rho}\right|^{2}\d{x}\right)^{\nicefrac{(1-\kappa)p}{2}}\notag\\
&\quad \quad \quad  \quad \quad \quad \times \left(\dashint_{Q_\rho^\lambda}\left[\left|\frac{u-(u)_{Q_\rho^\lambda}}{\rho}\right|^{\kappa p}+|\nabla u|^{\kappa p}\right]\d{x}\d{t}\right)
\end{align}
with a constant $C(d,p,L)<\infty$.
We focus on the last integral. Applying Lemma~\ref{SP1}, we have
\begin{align*}
\dashint_{Q_\rho^\lambda}&\left|\frac{u-(u)_{Q_\rho^\lambda}}{\rho}\right|^{\kappa p}\d{x}\d{t} \\
&\leq C\lambda^{(2-p)\kappa p}\left[\left(\dashint_{Q_\rho^\lambda}\big[|\nabla u|^{\kappa p}+|F|^{\kappa p} \big]\d{x}\d{t}\right)^{p-1} +\left(\rho^\theta \dashint_{Q_\rho^\lambda}\cE[u]\d{x}\d{t}\right)^{\kappa (p-1)}\right]\,,
\end{align*}
thereby combining this with~\eqref{SP2 eq.1} yields
\begin{align}\label{SP2 eq.2}
\left[\frac{\exc_{p}\left(u, Q_\rho^\lambda\right)}{\rho}\right]^{p} &\leq C\lambda^{\frac{2-p}{2}(1-\kappa)p}\left(\sup_{t \in I_\rho^\lambda}\dashint_{B_\rho}\frac{\bigl|u(t)-(u)_{Q_\rho^\lambda}\bigr|^2}{\lambda^{2-p}\rho^2}\d{x}\right)^{\nicefrac{(1-\kappa)p}{2}} \notag\\[2mm]
&\times \left[\lambda^{(2-p)\kappa p}\left(\dashint_{Q_\rho^\lambda}\big[|\nabla u|^{\kappa p}+|F|^{\kappa p} \big]\d{x}\d{t}\right)^{p-1}  \right. \notag\\
&\quad \quad \quad \quad  \left. +\lambda^{(2-p)\kappa p}\left(\rho^\theta \dashint_{Q_\rho^\lambda}\cE[u]\d{x}\d{t}\right)^{\kappa (p-1)} +\dashint_{Q_\rho^\lambda}|\nabla u|^{\kappa p}\d{x}\d{t}\right]\,.
\end{align}
To lighten the notation we write as follows:

\begin{equation}\label{SP2 eq.3}
\left\{
\begin{aligned}
& \mathbf{S}:=\sup_{t \in I_\rho^\lambda}\dashint_{B_\rho}\frac{\bigl|u(t)-(u)_{Q_\rho^\lambda}\bigr|^2}{\lambda^{2-p}\rho^2}\d{x}\,, \\
& \mathbf{I}:=\dashint_{Q_\rho^\lambda}\big[|\nabla u|^{\kappa p}+|F|^{\kappa p} \big]\d{x}\d{t}\,, \quad \mbox{and} 
\\ 
& \mathbf{T}:=\rho^\theta \dashint_{Q_\rho^\lambda}\cE[u]\d{x}\d{t}\,.
\end{aligned}
\right.
\end{equation}
Thus,~\eqref{SP2 eq.2} implies that
\begin{align}\label{SP2 eq.4}
\left[\frac{\exc_{p}\left(u, Q_\rho^\lambda\right)}{\rho}\right]^{p} \leq C\lambda^{\frac{2-p}{2}(1-\kappa)p}\mathbf{S}^{\frac{(1-\kappa)p}{2}}\mathbf{I}+C\lambda^{\frac{2-p}{2}(1-\kappa)p+(2-p)\kappa p}\mathbf{S}^{\frac{(1-\kappa)p}{2}}\left(\mathbf{I}^{p-1}+\mathbf{T}^{\kappa (p-1)}\right)\,.
\end{align}
We split the estimate of the integral on the right side of~\eqref{SP2 eq.4} into two cases: $p \geq 2$ and $1<p<2$. When considering $p\geq 2$, we use H\"{o}lder's inequality and  $\lambda$-intrinsic geometry condition~\eqref{intrinsic 2} to observe that
\begin{equation*}
\mathbf{I} \leq \left(\dashint_{Q_\rho^\lambda}\big[|\nabla u|^{p}+|F|^{p} \big]\d{x}\d{t}\right)^{\kappa} \stackrel{\eqref{intrinsic 2}}{\leq} (2^{d+2}K\lambda^p)^\kappa 
\quad \iff \quad \left[\frac{1}{2^{d+2}K}\mathbf{I}^{\nicefrac{1}{\kappa}}\right]^{\nicefrac{1}{p}} \leq \lambda,
\end{equation*}
while, appealing to Young's inequality with the exponents $\left(\frac{2}{(1-\kappa)p},\frac{2}{2-(1-\kappa)p}\right)$, we bound
\begin{align}\label{SP2 eq.5}
\left[\frac{\exc_{p}\left(u, Q_\rho^\lambda\right)}{\rho}\right]^{p}  &\leq C \mathbf{I}^{\frac{1}{\kappa p}\frac{2-p}{2}(1-\kappa)p+1}\mathbf{S}^{\frac{(1-\kappa)p}{2}} +C\mathbf{I}^{\frac{1}{\kappa p}\left[\frac{2-p}{2}(1-\kappa)p+(2-p)\kappa p\right]}\mathbf{S}^{\frac{(1-\kappa)p}{2}}\mathbf{I}^{p-1} \notag\\
&\quad \quad \quad  \quad \quad \quad +C\lambda^{\left[\frac{2-p}{2}(1-\kappa)p+(2-p)\kappa p\right]}\mathbf{S}^{\frac{(1-\kappa)p}{2}} \mathbf{T}^{\kappa (p-1)} \notag\\
&=2C\mathbf{I}^{\frac{2-p+p\kappa}{2\kappa}}\mathbf{S}^{\frac{(1-\kappa)p}{2}} +\underbrace{C\lambda^{-\frac{(p-2)(1+\kappa)p}{2}}\mathbf{S}^{\frac{(1-\kappa)p}{2}} \mathbf{T}^{\kappa (p-1)}}_{=:\mathbf{U}}\notag \\
&\leq \frac{\eps}{2} \mathbf{S}+C\eps^{-\frac{(1-\kappa)p}{2-(1-\kappa)p}}\mathbf{I}^{\nicefrac{1}{\kappa}} +\mathbf{U}
\end{align}
whenever $\eps \in (0,1]$, where $C(\data, K) <\infty$. Here, in the penultimate line, we have used the following manipulations:
\[
\left\{
\begin{aligned}
&\tfrac{1}{\kappa p}\tfrac{2-p}{2}(1-\kappa)p+1=\tfrac{2-p+p\kappa}{2\kappa}\,, \quad \mbox{and} \\
&\tfrac{1}{\kappa p}\left[\tfrac{2-p}{2}(1-\kappa)p+(2-p)\kappa p\right]+p-1=\tfrac{2-p+p\kappa}{2\kappa}.
\end{aligned}
\right.
\]
The remaining task is to estimate $\mathbf{U}$. Denote $\beta:=(1-\kappa)p/2$ and $\sigma:=\frac{(p-2)(1+\kappa)}{2-(1-\kappa)p}$ for short. Note that
\[
1+\sigma=\tfrac{2\kappa (p-1)}{2-(1-\kappa)p} \quad \Longrightarrow \quad 
\left\{
\begin{aligned}
& (1+\sigma)(1-\beta)=\kappa(p-1)\,, \quad \mbox{and} 
\\ 
& p\sigma(1-\beta)=\tfrac{(p-2)(1+\kappa)p}{2}\,.
\end{aligned}
\right.
\]
Appealing to Young's inequality with $\left(\frac{2}{(1-\kappa)p}, \frac{2}{2-(1-\kappa)p}\right)=\bigl(\nicefrac{1}{\beta},\,\nicefrac{1}{(1-\beta)}\bigr)$, we have that 
\begin{equation*}
\mathbf{U} =C\mathbf{S}^{\beta} \left[\mathbf{T} \left(\frac{\mathbf{T}}{\lambda^p}\right)^\sigma \right]^{1-\beta} \leq \frac{\eps}{2}\mathbf{S} + C\eps^{-\frac{\beta}{1-\beta}}\mathbf{T}\left(\frac{\mathbf{T}}{\lambda^p}\right)^\sigma\,.
\end{equation*}
Combining this with~\eqref{SP2 eq.5} yields the statement~\eqref{SP2display} in the case $p \geq 2$.

In the latter case $1<p<2$, observe that
\[
\mathbf{I} \leq (2^{d+2}K\lambda^p)^\kappa \quad \iff \quad \mathbf{I} \leq  (2^{d+2}K\lambda^p)^{\kappa(2-p)}\mathbf{I}^{p-1},
\]
thereby getting
\begin{align*}
\left[\frac{\exc_{p}\left(u, Q_\rho^\lambda\right)}{\rho}\right]^{p} &\leq C\lambda^{\frac{2-p}{2}(1-\kappa)p}\mathbf{S}^{\frac{(1-\kappa)p}{2}}\mathbf{I}+C\lambda^{\frac{2-p}{2}(1-\kappa)p+(2-p)\kappa p}\mathbf{S}^{\frac{(1-\kappa)p}{2}}\bigl(\mathbf{I}^{p-1}+\mathbf{T}^{\kappa (p-1)}\bigr)\\
&\leq \left[1+(2^{d+2}K)^{\kappa(2-p)}\right]C \lambda^{\frac{p(2-p)(1+\kappa)}{2}}\mathbf{S}^{\frac{(1-\kappa)p}{2}}\bigl(\mathbf{I}^{p-1}+\mathbf{T}^{\kappa (p-1)}\bigr)
\end{align*}
with a constant $C(d,p)<\infty$. Appealing to Young's inequality for three factors
\[
\left(\tfrac{2}{(1-\kappa)p},\tfrac{2}{(2-p)(1+\kappa)},\tfrac{1}{\kappa(p-1)}\right),
\]
we obtain that, for any $\delta >0$,
\[
\left[\frac{\exc_{p}\left(u, Q_\rho^\lambda\right)}{\rho}\right]^{p} \leq \delta\mathbf{S} +\delta \lambda^p+C\delta^{-\frac{1-\kappa(p-1)}{\kappa(p-1)}}\bigl(\mathbf{I}^{\nicefrac{1}{\kappa}}+\mathbf{T}\bigr)\,.
\]
Thus, the $\lambda$-intrinsic geometric condition~\eqref{intrinsic 2} and the H\"{o}lder inequality yield that
\begin{align*}
\left[\frac{\exc_{p}\left(u, Q_\rho^\lambda\right)}{\rho}\right]^{p} &\stackrel{\eqref{intrinsic 2}}{\leq} \delta 2^{d+2}K\left[\mathbf{S}+\left(\dashint_{Q_\rho^\lambda}\big[|\nabla u|^p+|F|^p\big]\d{x}\d{t}\right)\right]+C\delta^{-\frac{1-\kappa(p-1)}{\kappa(p-1)}}\bigl(\mathbf{I}^{\nicefrac{1}{\kappa}}+\mathbf{T}\bigr)\,,
\end{align*}
and hence, by taking $\delta=\eps/(2^{d+2}K)$ for any $\eps \in (0,1]$ the result~\eqref{SP2display} in turn also follows in the case $1<p<2$. The proof of Lemma~\ref{SP2} is complete.
\end{proof}

Finally, we prove the third version of the Sobolev-Poincar\'{e} inequality. We emphasize that the restriction $p>2d/(d+2)$ enters only at this stage, through the requirement $2d/[p(d+2)]<1$.

\begin{lemma}[Sobolev-Poincar\'{e} inequality III]\label{SP3}
Fix $p>\frac{2d}{d+2}$ and $ s \in (0,1)$ and set $\theta:=\frac{(1-s)p}{p-1}$. Let $\kappa=\max\left\{\frac{2d}{p(d+2)}, \frac{d}{d+2},\frac{q_\ast}{p}\right\}$. Under the same assumption as in Lemma~\ref{SP2}, there exists a constant $C(\data, K)<\infty$ such that
\begin{align}\label{SP3display}
\lambda^{p-2}\left[\frac{\exc_{2}\left(u, Q_\rho^\lambda\right)}{\rho}\right]^{2} &\leq \eps\left(\sup_{t \in I_\rho^\lambda(t_0)}\dashint_{B_\rho(x_0)}\frac{\bigl|u(t)-(u)_{Q_\rho^\lambda(z_0)}\bigr|^2}{\lambda^{2-p} \rho^2}\d{x} +\dashint_{Q_\rho^\lambda(z_0)}|\nabla u|^p\d{x}\d{t} \right)\notag\\[2mm]
& \quad \quad +C\eps^{-\left(\frac{p}{2} \vee \frac{1}{p-1}\right)}\left[\left(\dashint_{Q_\rho^\lambda(z_0)}|\nabla u|^{\kappa p}\d{x}\d{t}\right)^{\nicefrac{1}{\kappa}}\right.\notag\\[2mm]
& \quad \quad \quad \quad \quad\left. +\boldsymbol{\mathfrak{T}}_{p, \lambda}(\rho)\left(\rho^\theta \dashint_{Q_\rho^\lambda(z_0)}\cE[u]\d{x}\d{t}\right) +\dashint_{Q_\rho^\lambda(z_0)}|F|^p\d{x}\d{t}\right]
\end{align}
holds true whenever $\eps \in (0,1]$ and $\bm{\mathfrak{T}}_{p, \lambda}(\rho)$ is given by
\[
\bm{\mathfrak{T}}_{p, \lambda}(\rho):=
\left\{
\begin{aligned}
&\frac{1}{\lambda^{p \sigma_0}}\left(\rho^\theta \dashint_{Q_\rho^\lambda(z_0)}\cE[u]\d{x}\d{t}\right)^{\sigma_0}  &\mbox{if} \quad & p \geq 2\,, \\
&1  &\mbox{if} \quad & 1<p<2\,,
\end{aligned}
\right.
\]
with $\sigma_0:=\frac{(p-2)(1+\kappa)}{2-(1-\kappa)p}$ for short.
\end{lemma}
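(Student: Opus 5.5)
We follow the scheme of the proof of Lemma~\ref{SP2}, replacing the target exponent $p$ by $2$. Set $z_0=0$ and use the abbreviations $\mathbf{S}$, $\mathbf{I}$, $\mathbf{T}$ from \eqref{SP2 eq.3}.

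\textbf{Step 1: Gagliardo--Nirenberg on time slices.} For a.e.\ $t$ we apply Lemma~\ref{GN} to $w=u(t)-(u)_{Q_\rho^\lambda}$ with
\[
(\chi_1,\chi_2,\chi_3,\vartheta)=(2,\kappa p,2,\vartheta),\qquad \vartheta:=\frac{2}{d+2}\cdot\frac{1}{\kappa p}\cdot\frac{d}{1}\,\text{-type choice.}
\]
More precisely, we take $\vartheta$ with $\vartheta\kappa p\leq 2$ so that the first factor enters linearly in $\dashint_{B_\rho}(\dots)^{\kappa p}$ raised to the power $\tfrac{2\vartheta}{\kappa p}$. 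The admissibility condition reads $-\tfrac d2\leq \vartheta(1-\tfrac{d}{\kappa p})-(1-\vartheta)\tfrac d2$. The cleanest choice is $\vartheta=\tfrac{\kappa p}{2}\leq 1$, which is exactly where $\kappa\geq \tfrac{2d}{p(d+2)}$ is used; this is the restriction $p>\tfrac{2d}{d+2}$ that keeps $\kappa<1$. With this choice the admissibility condition reduces to $\kappa\geq \tfrac{d}{d+2}$, already assumed.

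Integrating in time and pulling out the supremum of the $L^2$ factor gives
\[
\Bigl[\frac{\exc_2}{\rho}\Bigr]^2\leq C\,(\lambda^{2-p}\mathbf{S})^{1-\frac{\kappa p}{2}}\dashint_{Q_\rho^\lambda}\Bigl[\Bigl|\tfrac{u-(u)}{\rho}\Bigr|^{\kappa p}+|\nabla u|^{\kappa p}\Bigr].
\]
Multiplying by $\lambda^{p-2}$, the $\lambda$-power becomes $\lambda^{(p-2)\kappa p/2}$.

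\textbf{Step 2: bound the zero-order term.} We control $\dashint|\tfrac{u-(u)}{\rho}|^{\kappa p}$ by Lemma~\ref{SP1}, exactly as in \eqref{SP2 eq.2}. This yields
\[
\lambda^{p-2}\Bigl[\frac{\exc_2}{\rho}\Bigr]^2\leq C\lambda^{\frac{(p-2)\kappa p}{2}}\,\mathbf{S}^{1-\frac{\kappa p}{2}}\Bigl[\mathbf{I}+\lambda^{(2-p)\kappa p}\bigl(\mathbf{I}^{p-1}+\mathbf{T}^{\kappa(p-1)}\bigr)\Bigr].
\]

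\textbf{Step 3: absorb via Young's inequality.} We use Young's inequality with exponents $\bigl(\tfrac{2}{2-\kappa p},\tfrac{2}{\kappa p}\bigr)$, together with the intrinsic bound $\mathbf{I}\leq(2^{d+2}K\lambda^p)^\kappa$ from \eqref{intrinsic 2}, to trade powers of $\lambda$ for powers of $\mathbf{I}$ or vice versa. The second Young exponent turns $\mathbf{I}^{2/(\kappa p)}$-type quantities into $\mathbf{I}^{1/\kappa}$. We split into two cases.

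\emph{Case $p\geq2$.} The $\lambda$-powers attached to $\mathbf{I}$ are replaced by $\mathbf{I}^{1/(\kappa p)}$, as in \eqref{SP2 eq.5}, and the exponents collapse to $\varepsilon\mathbf{S}+C\varepsilon^{-c}\mathbf{I}^{1/\kappa}$. The $\mathbf{T}$-term carries a negative power of $\lambda$. We rewrite it as $\mathbf{S}^{\beta}[\mathbf{T}(\mathbf{T}/\lambda^p)^{\sigma}]^{1-\beta}$ with $\beta=1-\tfrac{\kappa p}{2}$ and check that the resulting $\sigma$ coincides with $\sigma_0$ (or is dominated by it after Hölder). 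This gives the $\mathfrak T_{p,\lambda}$ term.

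\emph{Case $1<p<2$.} We convert $\mathbf{I}$ into $\lambda^{p\kappa(2-p)}\mathbf{I}^{p-1}$ and apply a three-factor Young inequality among $\mathbf{S}$, $\lambda^p$ and $(\mathbf{I}^{p-1}+\mathbf{T}^{\kappa(p-1)})^{1/(\kappa(p-1))}$. This produces $\delta\mathbf{S}+\delta\lambda^p+C\delta^{-c}(\mathbf{I}^{1/\kappa}+\mathbf{T})$. We then bound $\lambda^p$ by \eqref{intrinsic 2} in terms of $\dashint(|\nabla u|^p+|F|^p)$ and set $\delta=\varepsilon/(2^{d+2}K)$. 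The $\mathbf{I}^{1/\kappa}$ term is split by Hölder into $(\dashint|\nabla u|^{\kappa p})^{1/\kappa}+\dashint|F|^p$.

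\textbf{Main obstacle.} The main difficulty is bookkeeping: verifying that the exponent of $\varepsilon$ is at most $\tfrac p2\vee\tfrac1{p-1}$ in the two cases, and that the $p\geq2$ tail exponent matches $\sigma_0$ despite the different Gagliardo--Nirenberg interpolation. If it does not match exactly, I would first try to first apply Lemma~\ref{SP2} to the $L^p$ excess and then interpolate $L^2$ between the $\sup$-$L^2$ bound and the $L^p$ excess, so that $\mathfrak T_{p,\lambda}$ is inherited verbatim.
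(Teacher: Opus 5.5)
Your treatment of $\frac{2d}{d+2}<p<2$ is correct and is essentially the paper's argument. There $\kappa p=\frac{2d}{d+2}$, so your Gagliardo--Nirenberg step with $\vartheta=\frac{\kappa p}{2}=\frac{d}{d+2}$ is equivalent to the paper's Sobolev--Poincar\'e step with exponent $\frac{2d}{d+2}$. The rest also matches:
\begin{itemize}
\item Lemma~\ref{SP1};
\item the conversion $\mathbf{I}\le C\lambda^{(2-p)\kappa p}\mathbf{I}^{p-1}$, which is legitimate because $2-p>0$;
\item the same three-factor Young inequality, whose $\eps$-power $\frac{2p+d(2-p)}{2d(p-1)}$ is at most $\frac{1}{p-1}$.
\end{itemize}
There is one small slip. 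With $\vartheta=\frac{\kappa p}{2}$ the admissibility condition is $\kappa p\ge\frac{2d}{d+2}$, not $\kappa\ge\frac{d}{d+2}$. It holds by the choice of $\kappa$, and $\vartheta<1$ is a separate requirement.

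The case $p\ge 2$ does not go through as proposed, for three reasons.
\begin{itemize}
\item \emph{Interpolation unavailable.} $\vartheta=\frac{\kappa p}{2}$ must lie in $(0,1)$. But $\kappa p\ge p-1\ge 2$ for $p\ge 3$, and $\kappa p\ge\frac{pd}{d+2}\ge 2$ for $p\ge 2+\frac4d$, so the interpolation fails there.
\item \emph{Wrong direction for the leading term.} Even where the interpolation is admissible, after multiplying by $\lambda^{p-2}$ the term $\lambda^{\frac{(p-2)\kappa p}{2}}\mathbf{S}^{1-\frac{\kappa p}{2}}\mathbf{I}$ carries a \emph{positive} power of $\lambda$. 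The intrinsic condition only gives $\mathbf{I}^{1/\kappa}\le C\lambda^p$, which is a lower bound on $\lambda$. So replacing $\lambda$ by $\mathbf{I}^{1/(\kappa p)}$ ``as in \eqref{SP2 eq.5}'' goes the wrong way. In Lemma~\ref{SP2} it worked only because the corresponding exponent $\frac{2-p}{2}(1-\kappa)p$ was nonpositive.
\item \emph{Wrong tail exponent.} Young's inequality against $\mathbf{S}$ alone turns $\lambda^{-\frac{(p-2)\kappa p}{2}}\mathbf{S}^{1-\frac{\kappa p}{2}}\mathbf{T}^{\kappa(p-1)}$ into $\mathbf{T}(\mathbf{T}/\lambda^p)^{(p-2)/p}$, and $\frac{p-2}{p}<\sigma_0$. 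Since $\mathbf{T}/\lambda^p$ has no a priori size, this is neither equal to nor dominated by $\bm{\mathfrak{T}}_{p,\lambda}(\rho)\mathbf{T}$; you only get $(1+\bm{\mathfrak{T}}_{p,\lambda}(\rho))\mathbf{T}$.
\end{itemize}
The last two defects can be repaired. Insert $\lambda^p$ as an extra Young factor (for the tail term this uses $\sigma_0\ge\frac{p-2}{p}$) and absorb $\delta\lambda^p$ via \eqref{intrinsic 2}. For $\kappa p\ge 2$, use H\"older instead of interpolation. None of this is in the proposal.

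The paper avoids all of this. By H\"older $\exc_2\le\exc_p$, and Young with exponents $(\frac{p}{p-2},\frac p2)$ gives
\[
\lambda^{p-2}\Bigl[\frac{\exc_2(u,Q_\rho^\lambda)}{\rho}\Bigr]^2\le\delta\lambda^p+\delta^{-\frac{p-2}{2}}\Bigl[\frac{\exc_p(u,Q_\rho^\lambda)}{\rho}\Bigr]^p .
\]
Lemma~\ref{SP2} then supplies $\bm{\mathfrak{T}}_{p,\lambda}(\rho)$ verbatim, and $\delta\lambda^p$ is absorbed via \eqref{intrinsic 2} into $\eps\dashint|\nabla u|^p$ plus a $|F|^p$ term. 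Your fallback points in this direction. However, the operative step is trading the weight $\lambda^{p-2}$ against $\lambda^p$, not interpolating with the $\sup$-$L^2$ bound.

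On the $\eps$-bookkeeping you flagged: along this route the power is $\frac{p-2}{2}+\frac p2\bm{\gamma}_p\le p-1$. This can exceed $\frac p2$; for example $d=2$, $p=3$ gives $2$. It is harmless, since Lemma~\ref{t.Reverse Holder} only uses some $p$-dependent power.
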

\begin{proof}
As usual, we may let $z_0=0$. We start from the Poincar\'{e} inequality:
\[
\left(\dashint_{B_\rho}\left|u-(u(t))_{B_\rho}\right|^2\d{x}\right)^{\frac{d}{d+2}} \leq c(d)\rho^{\frac{2d}{d+2}}\dashint_{B_\rho}|\nabla u(t)|^{\frac{2d}{d+2}}\d{x}\,.
\]
H\"{o}lder's inequality gives 
\begin{align*}
\left(\dashint_{B_\rho}\left|(u(t))_{B_\rho}-(u)_{Q_\rho^\lambda}\right|^2\d{x}\right)^{\frac{d}{d+2}}=\left|\,\dashint_{B_\rho}\big[u(t)-(u)_{Q_\rho^\lambda}\big]\d{x}\right|^{\frac{2d}{d+2}} \leq \dashint_{B_\rho}\left|u(t)-(u)_{Q_\rho^\lambda}\right|^{\frac{2d}{d+2}}\d{x}\,.
\end{align*}
Combining these estimates, we get
\begin{align*}
\left[\frac{\exc_{2}\left(u, Q_\rho^\lambda\right)}{\rho}\right]^{2} &\leq \left(\sup_{t \in I_\rho^\lambda}\dashint_{B_\rho}\frac{\bigl|u-(u)_{Q_\rho^\lambda}\bigr|^2}{\rho^2}\d{x}\right)^{\frac{2}{d+2}}\dashint_{I_\rho^\lambda}\left(\dashint_{B_\rho}\frac{\bigl|u-(u)_{Q_\rho^\lambda}\bigr|^2}{\rho^2}\d{x}\right)^{\frac{d}{d+2}}\d{t} \\[2mm]
&\leq c(d)\left(\sup_{t \in I_\rho^\lambda}\dashint_{B_\rho}\frac{\bigl|u-(u)_{Q_\rho^\lambda}\bigr|^2}{\rho^2}\d{x}\right)^{\frac{2}{d+2}}\\
&\quad \quad \quad \times \left\{\dashint_{Q_\rho^\lambda}|\nabla u|^{\frac{2d}{d+2}}\d{x}\d{t}+\left[\frac{\exc_{\frac{2d}{d+2}}(u,Q_\rho^\lambda)}{\rho}\right]^{\frac{2d}{d+2}}\right\}.
\end{align*}
For the excess term on the right side, appealing to Lemma~\ref{SP1} with $\kappa=\frac{2d}{(d+2)p}$, we have
\begin{align}\label{SP3 eq.1}
\left[\frac{\exc_{2}\left(u, Q_\rho^\lambda\right)}{\rho}\right]^{2} &\leq C_1 \left(\sup_{t \in I_\rho^\lambda}\dashint_{B_\rho}\frac{\bigl|u-(u)_{Q_\rho^\lambda}\bigr|^2}{\lambda^{2-p}\rho^2}\d{x}\right)^{\frac{2}{d+2}} \notag\\
&\quad \times\left[\dashint_{Q_\rho}|\nabla u|^{\frac{2d}{d+2}}\d{x}\d{t} +\lambda^{(2-p)\frac{2d}{d+2}}\left(\dashint_{Q_\rho^\lambda}\big[|\nabla u|^{\frac{2d}{d+2}}+|F|^{\frac{2d}{d+2}}\big]\d{x}\d{t}\right)^{p-1} \right.\\
&\quad \quad \quad \quad \left.+\lambda^{(2-p)\frac{2d}{d+2}}\left(\rho^\theta \dashint_{Q_\rho^\lambda}\cE[u]\d{x}\d{t}\right)^{\frac{2d}{(d+2)p} (p-1)}\right]
\end{align}
for a constant $C_1(\data,K)<\infty$.
As used in~\eqref{SP2 eq.3}, we temporarily use the shorthand notation
\begin{equation}\label{SP2 eq.3}
\left\{
\begin{aligned}
& \mathbf{S}:=\sup_{t \in I_\rho^\lambda}\dashint_{B_\rho}\frac{\bigl|u(t)-(u)_{Q_\rho^\lambda}\bigr|^2}{\lambda^{2-p}\rho^2}\d{x}\,, \\
& \mathbf{I}:=\dashint_{Q_\rho^\lambda}\bigl(|\nabla u|^{\frac{2d}{d+2}}+|F|^{\frac{2d}{d+2}} \bigr)\d{x}\d{t}\,, \quad \mbox{and} 
\\ 
& \mathbf{T}:=\rho^\theta \dashint_{Q_\rho^\lambda}\cE[u]\d{x}\d{t}\,.
\end{aligned}
\right.
\end{equation}
and so, the previous display~\eqref{SP3 eq.1} recasts
\begin{align}\label{SP3 eq.2}
\left[\frac{\exc_{2}\left(u, Q_\rho^\lambda\right)}{\rho}\right]^{2} \leq C_1\mathbf{S}^{\frac{d}{d+2}}\bigl(\mathbf{I}+\lambda^{(2-p)\frac{2d}{d+2}}\mathbf{I}^{p-1}+\lambda^{(2-p)\frac{2d}{d+2}}\mathbf{T}^{\frac{2d}{(d+2)p} (p-1)}\bigr)\,.
\end{align}
The proof is now split in two cases. Namely, $\frac{2d}{d+2}<p <2$ and $p \geq 2$. When we are considering the case $\frac{2d}{d+2}<p<2$, H\"{o}lder's inequality and $\lambda$-intrinsic geometry condition~\eqref{intrinsic 1} yield that
\[
\mathbf{I} \leq \left(\dashint_{Q_\rho^\lambda}\bigl[|\nabla u|^p+|F|^p\bigr]\d{x}\d{t}\right)^{\frac{2d}{(d+2)p}} \stackrel{\eqref{intrinsic 1}}{\leq} K^{\frac{2d}{(d+2)p}}\lambda^{\frac{2d}{d+2}}\,,
\]
that is,
\[
\mathbf{I} \leq C_2\lambda^{(2-p)\frac{2d}{d+2}}\mathbf{I}^{p-1}
\]
with a constant $C_2(d,p,K)<\infty$; therefore, it follows from~\eqref{SP3 eq.2} that
\begin{equation*}
\left[\frac{\exc_{2}\left(u, Q_\rho^\lambda\right)}{\rho}\right]^{2} \leq C_3\lambda^{(2-p)\frac{2d}{p(d+2)}}\mathbf{S}^{\frac{d}{d+2}}\bigl(\mathbf{I}^{p-1}+\mathbf{T}^{\frac{2d}{(d+2)p} (p-1)}\bigr)\,.
\end{equation*}
Applying Young's inequality with a pair of exponents
\[
\left(\tfrac{d+2}{2},\tfrac{p(d+2)}{2d(p-1)},\tfrac{p(d+2)}{d(2-p)}\right)
\]
yields 
\[
\left[\frac{\exc_{2}\left(u, Q_\rho^\lambda\right)}{\rho}\right]^{2} \leq \eps \bigl(\lambda^{2-p}\mathbf{S}+\lambda^2\bigr)+C_3\eps^{-\frac{2p+d(2-p)}{2d(p-1)}}\lambda^{2-p} \left(\mathbf{I}^{\frac{p(d+2)}{2d}} + \mathbf{T}\right)
\]
whenever $\eps \in (0,1]$, where $C_3(d,p,L,K)<\infty$. Finally, appealing to H\"{o}lder's inequality and~\eqref{intrinsic 1}, we deduce that 
\begin{align}\label{SP3 eq.3}
\lambda^{p-2}&\left[\frac{\exc_{2}\left(u, Q_\rho^\lambda\right)}{\rho}\right]^{2}
\notag\\[2mm]
&\leq \eps \lambda^p +\eps \sup_{t \in I_\rho^\lambda}\dashint_{B_\rho}\frac{\bigl|u(t)-(u)_{Q_\rho^\lambda}\bigr|^2}{\lambda^{2-p}\rho^2}\d{x} \notag\\[2mm]
& \quad \quad \quad+C_3\eps^{-\frac{1}{p-1}}\left[\left(\dashint_{Q_\rho^\lambda}|\nabla u|^{\frac{2d}{d+2}}\d{x}\d{t}\right)^{\frac{p(d+2)}{2d}}+\dashint_{Q_\rho^\lambda}|F|^p\d{x}\d{t}+ \rho^\theta \dashint_{Q_\rho^\lambda}\cE[u]\d{x}\d{t}\right]\notag\\[2mm]
& \leq \eps \left(\sup_{t \in I_\rho^\lambda}\dashint_{B_\rho}\frac{\bigl|u(t)-(u)_{Q_\rho^\lambda}\bigr|^2}{\lambda^{2-p}\rho^2}\d{x} +\dashint_{Q_\rho^\lambda}|\nabla u|^{p}\d{x}\d{t}\right)\notag\\[2mm]
& \quad \quad \quad+C_3 \eps^{-\frac{1}{p-1}}\left[\left(\dashint_{Q_\rho^\lambda}|\nabla u|^{\kappa p}\d{x}\d{t}\right)^{\nicefrac{1}{\kappa}}+ \rho^\theta \dashint_{Q_\rho^\lambda}\cE[u]\d{x}\d{t}+\dashint_{Q_\rho^\lambda}|F|^p\d{x}\d{t} \right]\,,
\end{align}
where, in the penultimate line, we used that $\eps^{-\frac{2p+d(2-p)}{2d(p-1)}}\leq \eps^{-\frac{1}{p-1}}$.

On the other hand, in the latter case $p\geq 2$, Young's and H\"{o}lder's inequalities yield that
\begin{align*}
\left[\frac{\exc_{2}\left(u, Q_\rho^\lambda \right)}{\rho}\right]^{2} &=\bigl(\delta^{\frac{p-2}{p}}\lambda^{\frac{2(p-2)}{p}}\bigr)\left(\delta^{-\frac{p-2}{p}}\lambda^{-\frac{2(p-2)}{p}}\left[\frac{\exc_{2}\left(u, Q_\rho^\lambda \right)}{\rho}\right]^{2} \right) \\
&\leq \delta\lambda^2+\delta^{-\frac{p-2}{2}}\lambda^{-(p-2)}\left[\frac{\exc_{p}\left(u, Q_\rho^\lambda\right)}{\rho}\right]^{p}\,,
\end{align*}
which together with Lemma~\ref{SP2} yields
\begin{align*}
\left[\frac{\exc_{2}\left(u, Q_\rho^\lambda\right)}{\rho}\right]^{2}  \leq \delta \lambda^2&+\delta^{-\frac{p-2}{2}}\lambda^{-p+2}\left\{\eta \left(\sup_{t \in I_\rho^\lambda} \dashint_{B_\rho}\frac{\bigl|u(t)-(u)_{Q_\rho^\lambda}\bigr|^2}{\lambda^{2-p}\rho^2}\d{x}+\dashint_{Q_\rho^\lambda}|\nabla u|^p\d{x}\d{t}\right)\right.\\
&\left. +C_4\eta^{-\frac{(1-\kappa)p}{2-(1-\kappa)p}}\left[\left(\dashint_{Q_\rho^\lambda}|\nabla u|^{\kappa p}\d{x}\d{t}\right)^{\nicefrac{1}{\kappa}}+\bm{\mathfrak{T}}_{p,\lambda}(\rho)\left(\rho^\theta \dashint_{Q^\lambda_\rho}\cE[u]\d{x}\d{t}\right) \right.\right.\\
&\left. \left. \quad \quad \quad \quad \quad \quad \quad \quad+\dashint_{Q_\rho^\lambda}|F|^p\d{x}\d{t}\right]\right\}
\end{align*}
for all $\delta,\eta \in (0,1]$, where $\kappa:=\max\{\frac{d}{d+2},\frac{q_\ast}{p}\}<1$ and $C_4(\data, K)<\infty$. We select $\delta$ and $\eta$ such that, for any $\eps \in (0,1]$, 
\[
\delta=\eps/2 \quad \mbox{and}\quad \delta^{-\frac{p-2}{2}}\eta K=\eps/2\,,
\]
and hence we have by~\eqref{intrinsic 1} that
\begin{align}\label{SP3 eq.4}
\lambda^{p-2}\left[\frac{\exc_{2}\left(u, Q_\rho^\lambda \right)}{\rho}\right]^{2}  &\leq \eps \lambda^p+\eps\sup_{t \in I_\rho^\lambda} \dashint_{B_\rho}\frac{\bigl|u(t)-(u)_{Q_\rho^\lambda}\bigr|^2}{\lambda^{2-p}\rho^2}\d{x} \notag\\
&\quad +C_5\eps^{-\frac{p}{2}\cdot \frac{(1-\kappa)p}{2-(1-\kappa)p}}\left[\left(\dashint_{Q_\rho^\lambda}|\nabla u|^{\kappa p}\d{x}\d{t}\right)^{\nicefrac{1}{\kappa}} \right. \notag\\
&\left. \quad \quad \quad \quad +\bm{\mathfrak{T}}_{p,\lambda}(\rho)\left(\rho^\theta \dashint_{Q^\lambda_\rho}\cE[u]\d{x}\d{t}\right)+\dashint_{Q_\rho^\lambda}|F|^p\d{x}\d{t}\right]\,.
\end{align}
Using $\eps^{-\frac{p}{2}\cdot \frac{(1-\kappa)p}{2-(1-\kappa)p}}<\eps^{-\nicefrac{p}{2}}$, the desired result follows from~\eqref{SP3 eq.3} and~\eqref{SP3 eq.4}.
\end{proof}


\section{Reverse H\"{o}lder inequality for the spatial gradient}\label{Sect.4}
This section is devoted to the reverse H\"{o}lder inequality for the spatial gradient. 
\begin{lemma}[Reverse H\"{o}lder inequality]\label{t.Reverse Holder}
Let $p>\frac{2d}{d+2}$, $ s \in (0,1)$ and fix $\theta:=\frac{(1-s)p}{p-1}$. Let $u$ be a weak solution to~\eqref{maineq} in the sense of Definition~\ref{dfn u}, under the structural assumptions~(A1)--(A4) and $F \in L^p_{\mathrm{loc}}\left(\Omega_T\,;\R^d\right)$. Let $\kappa \in (0,1)$ be such that $\kappa =\max\left\{\frac{2d}{p(d+2)}, \frac{d}{d+2}, \frac{q_\ast}{p} \right\}$. Suppose that $Q_\rho^\lambda(z_0) \Subset \Omega_T$ with $0<\rho \leq 1$ satisfies the conditions~\eqref{intrinsic 1}--\eqref{intrinsic 2}. There exists a constant $C_{\mathsf{RH}}(\data,K)<\infty$ such that
\begin{align*}
\dashint_{Q_{\rho/2}^\lambda(z_0)}|\nabla u|^p\d{x}\d{t} &\leq C_{\RH}\left[\left(\dashint_{Q_{\rho}^\lambda(z_0)}|\nabla u|^{\kappa p}\d{x}\d{t}\right)^{\nicefrac{1}{\kappa}}+\dashint_{Q_{\rho}^\lambda(z_0)}\bigl(|F|^p+1\bigr)\d{x}\d{t}\right] \\[2mm]
&\quad \quad +C_{\RH}\left(\rho^{\frac{(1-s)p}{p-1}}\dashint_{I_{\rho}^\lambda(t_0)}\int_{\R^d \setminus B_{\rho/2}(x_0)}\frac{\big|u(x,t)-(u)_{Q_{\rho}^\lambda}\big|^{p}}{|x-x_0|^{d+sp}}\d{x}\d{t}\right) \\[2mm]
&\quad \quad +C_{\RH}\bm{\mathfrak{U}}_{p, \lambda}(\rho) \left(\rho^\theta \dashint_{Q_\rho^\lambda(z_0)} \cE[u]\d{x}\d{t}\right)\,,
\end{align*}
where
\[
\bm{\mathfrak{U}}_{p, \lambda}(\rho):=
\left\{
\begin{aligned}
&\frac{1}{\lambda^{p \sigma_0}}\left(\rho^\theta \dashint_{Q_{\rho}^\lambda(z_0)}\cE[u]\d{x}\d{t}\right)^{\sigma_0}  + \frac{1}{\lambda^{p(p-2)}}\left(\rho^\theta \dashint_{Q_\rho^\lambda(z_0)}\cE[u]\d{x}\d{t}\right)^{p-2}&\mbox{if} \quad & p \geq 2\,, \\
&1  &\mbox{if} \quad & \tfrac{2d}{d+2}<p<2\,,
\end{aligned}
\right.
\]
with $\sigma_0:=\frac{(p-2)(1+\kappa)}{2-(1-\kappa)p}$.
\end{lemma}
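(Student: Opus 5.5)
We would combine the Caccioppoli inequality (Lemma~\ref{t.caccioppoli}) with the level $k=(u)_{Q_\rho^\lambda(z_0)}$ and the Sobolev--Poincar\'e inequalities of Lemmas~\ref{SP2} and~\ref{SP3}. The absorption is carried out through the iteration Lemma~\ref{t.iteration}. Concretely, for radii $\rho/2\le r_1<r_2\le\rho$ we apply \eqref{e.caccioppoli} on $Q_{r_1}^\lambda\subset Q_{r_2}^\lambda$ with $k=(u)_{Q_\rho^\lambda}$. The left side controls
\[
\sup_t\dashint_{B_{r_1}}\frac{|u(t)-k|^2}{\lambda^{2-p}r_1^2}\d{x}+\dashint_{Q_{r_1}^\lambda}|\nabla u|^p\d{x}\d{t}.
\]
On the right we get five contributions:
\begin{enumerate}
\item the $p$-excess $\rho^{p}(r_2-r_1)^{-p}(\exc_p/\rho)^p$;
\item the $L^2$-excess $\rho^2(r_2^2-r_1^2)^{-1}\lambda^{p-2}(\exc_2/\rho)^2$;
\item $\dashint|F|^p$;
\item the term with kernel $|x-y|^{-d+(1-s)p}$;
\item the tail term.
\end{enumerate}
Integrating in $y$, the fourth term is bounded by $C\rho^{(1-s)p}(r_2-r_1)^{-p}\dashint_{Q_\rho^\lambda}|u-k|^p$, i.e.\ by $C\rho^{p}(r_2-r_1)^{-p}(\exc_p/\rho)^p$ using $\rho\le1$. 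So it merges with the first term.

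The excess terms are then estimated by Lemma~\ref{SP2} (for $\exc_p$) and Lemma~\ref{SP3} (for $\lambda^{p-2}\exc_2^2$) on the cylinder $Q_{r_2}^\lambda$, with a small parameter $\eps$ chosen like $\eps\sim(r_2-r_1)^p\rho^{-p}/C$. This produces a multiple $\frac12$ of the sup/energy quantities on $Q_{r_2}^\lambda$, which is absorbable. Two details need care here.
\begin{itemize}
\item The intrinsic condition is only assumed on $Q_\rho^\lambda$. To avoid this, we would apply SP2/SP3 only on $Q_\rho^\lambda$ itself, bounding averages over $Q_{r_2}^\lambda$ by $2^{d+2}$ times those over $Q_\rho^\lambda$. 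This avoids needing intrinsic geometry at intermediate scales.
\item The centre $(u)_{Q_\rho^\lambda}$ versus $(u)_{Q_{r_2}^\lambda}$ discrepancy is handled via Lemma~\ref{t.useful lemma}.
\end{itemize}
Hence it is cleaner to do a single-step estimate from $\rho/2$ to $\rho$, absorbing the $\eps$-terms of SP2/SP3 into $\eps\lambda^p$ via \eqref{intrinsic 1}. The absorption uses that $\lambda^p\le K\dashint_{Q_{\rho/2}^\lambda}(|\nabla u|^p+|F|^p)$, so $\eps K\lambda^p$ is absorbed into the left side. The remaining sup-term on $Q_\rho^\lambda$ is controlled by a Caccioppoli on $Q_\rho^\lambda\subset Q_{2\rho}^\lambda$ if needed, or by the iteration lemma in the radii. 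The $\eps^{-\gamma}$ constants then depend only on $\data,K$.

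The tail term is the main obstacle. We would bound the supremum over $x\in B_{(r_1+r_2)/2}$ of $\int_{\R^d\setminus B_{r_2}}|u(y)-k|^{p-1}|x-y|^{-d-sp}\d{y}$ by $C(\rho/(r_2-r_1))^{d+sp}\int_{\R^d\setminus B_{\rho/2}}|u-k|^{p-1}|y-x_0|^{-d-sp}\d{y}$. We then apply Young's inequality to the product with $|u-k|$ using exponents $(p/(p-1),p)$, inserting the weight $\rho^{(1-s)p/(p-1)}$ so the scaling matches. Concretely, Hölder in $y$ with measure $|y-x_0|^{-d-sp}\d y$ bounds the tail by $\rho^{-s}\tail_p(u-k;\rho/2)^{p-1}$ up to constants, giving
\[
\dashint_{Q^\lambda_\rho}\rho^{-s}\tail_p^{\,p-1}|u-k|
\le \eta\Bigl(\tfrac{\exc_p}{\rho}\Bigr)^p+C_\eta\,\rho^{\frac{(1-s)p}{p-1}}\dashint_{I_\rho^\lambda}\int_{\R^d\setminus B_{\rho/2}}\frac{|u-k|^p}{|x-x_0|^{d+sp}}\d{x}\d{t}.
\]
The second term is exactly the $p$-tail term in the statement, while the first returns to the excess already treated.

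Finally we collect terms. The $\mathfrak{T}_{p,\lambda}$ factors from SP2/SP3 produce the first summand of $\mathfrak{U}_{p,\lambda}$. The extra factor $\lambda^{-p(p-2)}\mathbf{T}^{p-2}$ for $p\ge2$ arises from using Young's inequality in SP3's $p\ge2$ branch to rewrite $\lambda^{p-2}$-weighted quantities; it comes from applying $\lambda^{2-p}\le(\mathbf T/\lambda^p)^{p-2}$-type bounds when $\lambda^p$ is compared with $\mathbf T$. The constant $1$ in $(|F|^p+1)$ absorbs the $\rho\le1$ normalisations.
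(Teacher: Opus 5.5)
There is a genuine gap in how you dispose of the sup-term. If you apply Lemmas~\ref{SP2} and~\ref{SP3} only on $Q_\rho^\lambda$, their $\eps$-part contains $\eps\,\mathbf{S}_\rho$, where $\mathbf{S}_\rho:=\sup_{t\in I_\rho^\lambda}\dashint_{B_\rho}|u(t)-(u)_{Q_\rho^\lambda}|^2/(\lambda^{2-p}\rho^2)\d{x}$. The intrinsic condition~\eqref{intrinsic 1} does let you absorb $\eps\dashint_{Q_\rho^\lambda}|\nabla u|^p\le \eps K^2\dashint_{Q_{\rho/2}^\lambda}(|\nabla u|^p+|F|^p)$. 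However, it gives no control of an $L^\infty_tL^2_x$ quantity, and the Caccioppoli left-hand side only controls the supremum on the smaller cylinder $Q_{r_1}^\lambda$.

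Neither of your fallbacks closes this.
\begin{itemize}
\item A Caccioppoli inequality on $Q_{2\rho}^\lambda$ is not available. Only $Q_\rho^\lambda\Subset\Omega_T$ is assumed, and it would leave $Q_{2\rho}^\lambda$-averages on the right, which the statement does not contain.
\item Lemma~\ref{t.iteration} only closes if the $\eps$-terms appear at the intermediate radius. That means applying SP2/SP3 on $Q_{r_2}^\lambda$ with the excess centred at $(u)_{Q_{r_2}^\lambda}$, which is precisely what you set out to avoid.
\end{itemize}
You cannot drop SP2/SP3 either, because the passage from $\kappa p$ to $p$ rests on Gagliardo--Nirenberg interpolation against exactly this sup-term. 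The same defect recurs in your tail estimate: the piece $\eta(\exc_p/\rho)^p$ on $Q_\rho^\lambda$, when sent back to SP2, regenerates $\eps\,\mathbf{S}_\rho$.

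The observation you are missing is that intermediate cylinders are harmless. For $r\in[\rho/2,\rho]$ one has $\dashint_{Q_r^\lambda}(|\nabla u|^p+|F|^p)\le 2^{d+2}K\lambda^p$ and $\lambda^p\le K\dashint_{Q_{\rho/2}^\lambda}(|\nabla u|^p+|F|^p)\le 2^{d+2}K\dashint_{Q_r^\lambda}(|\nabla u|^p+|F|^p)$. This two-sided comparability is all that the proofs of Lemmas~\ref{SP2}--\ref{SP3} actually use.

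The paper's proof runs as follows.
\begin{itemize}
\item It applies Caccioppoli on $Q_{r_1}^\lambda\subset Q_{r_2}^\lambda$ with the moving level $k=(u)_{Q_{r_1}^\lambda}$.
\item It transfers every excess to $Q_{r_2}^\lambda$ by Lemma~\ref{t.useful lemma}, with a bounded factor; the tail's Young step also puts its excess part on $Q_{r_2}^\lambda$.
\item It applies SP2/SP3 on $Q_{r_2}^\lambda$ with $\eps$ a suitable power of $(r_2-r_1)/r_2$.
\item It iterates $h(r)=\mathbf{S}_r+\dashint_{Q_r^\lambda}|\nabla u|^p$, where $\mathbf{S}_r$ is the sup-term on $Q_r^\lambda$ centred at $(u)_{Q_r^\lambda}$.
\end{itemize}
The only top-scale quantity left is the shift $|(u)_{Q_\rho^\lambda}-(u)_{Q_{r_1}^\lambda}|^p$ inside the tail. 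It is estimated by the sup-free Poincar\'e inequality of Lemma~\ref{SP1}. This is what produces $\lambda^{p(2-p)}\mathbf{T}^{p-1}$, with $\mathbf{T}:=\rho^\theta\dashint_{Q_\rho^\lambda}\cE[u]\d{x}\d{t}$. For $p\ge2$ this is the second summand $\lambda^{-p(p-2)}\mathbf{T}^{p-2}$ of $\bm{\mathfrak{U}}_{p,\lambda}(\rho)$ multiplied by $\mathbf{T}$. For $p<2$ it is a term absorbed through~\eqref{intrinsic 1}. So your attribution of that summand to the $p\ge2$ branch of SP3 is not correct. If you keep the fixed level $(u)_{Q_\rho^\lambda}$, every recentring error must likewise be routed through Lemma~\ref{SP1}, never through SP2, or the sup-term on $Q_\rho^\lambda$ reappears.
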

\begin{proof}
As usual, we may let $z_0=0$. Fix radii $\rho/2 \leq r_1<r_2\leq \rho \leq 1$. Appealing to the Caccioppoli inequality (Lemma~\ref{t.caccioppoli}) with level $k=(u)_{Q_{r_1}^\lambda}$ yields that
\begin{equation}\label{Reverse eq.1}
\sup_{t \in I^\lambda_{r_1}}\dashint_{B_{r_1}}\frac{\big|u(t)-(u)_{Q_{r_1}^\lambda}\big|^2}{\lambda^{2-p}r_1^2}\d{x}+\dashint_{Q_{r_1}^\lambda}|\nabla u|^p\d{x}\d{t} \leq   \mathbf{I}+\mathbf{II}+\mathbf{III}+\mathbf{IV}+\mathbf{V}\,,
\end{equation}
where we discarded the fractional integral on the left side and denoted, for short, 
\begin{equation*}
\left\{
\begin{aligned}
& \mathbf{I}:=C_{\mathsf{Cac}}\dashint_{Q_{r_2}^\lambda}\frac{\bigl|u-(u)_{Q_{r_1}^\lambda}\bigr|^p}{(r_2-r_1)^p}\d{x}\d{t}\,, \\
& \mathbf{II}:=C_{\mathsf{Cac}}\dashint_{Q_{r_2}^\lambda}\frac{\bigl|u-(u)_{Q_{r_1}^\lambda}\bigr|^2}{\lambda^{2-p}\left(r_2^2-r_1^2\right)}\d{x}\d{t}\,,\\
& \mathbf{III}:=C_{\mathsf{Cac}}\dashint_{Q_{r_2}^\lambda}|F|^p\d{x}\d{t}\,, \\
& \mathbf{IV}:=C_{\mathsf{Cac}}\frac{1}{(r_2-r_1)^p}\dashint_{I_{r_2}^\lambda}\int_{B_{r_2}}\dashint_{B_{r_2}}\frac{\bigl|u(x,t)-(u)_{Q_{r_1}^\lambda}\bigr|^p}{|x-y|^{d-(1-s)p}}\d{x}\d{y}\d{t} \,, \quad \mbox{and} 
\\ 
& \mathbf{V}:=C_{\mathsf{Cac}}\dashint_{Q_{r_2}^\lambda}\left(\sup_{x\in B_{(r_1+r_2)/2}}\int_{\R^d \setminus B_{r_2}}\frac{\bigl|u(y,t)-(u)_{Q_{r_1}^\lambda}\bigr|^{p-1}}{|x-y|^{d+sp}}\d{y}\right)\bigl|u-(u)_{Q_{r_1}^\lambda}\bigr|\d{x}\d{t} \,.
\end{aligned}
\right.
\end{equation*}
The quantity $C_{\mathsf{Cac}}$ denotes the constant, depending only on $\data$, as in the Caccioppoli inequality~\eqref{e.caccioppoli}. It can be henceforth absorbed into various constants in the argument below, since it depends only on $\data$.
\smallskip

\emph{Step 1: Estimates of $\mathbf{I}$--$\mathbf{V}$.} To lighten the notation, we denote $
\mathcal{R}_{r_1,r_2}:=\frac{r_2}{r_2-r_1}$. Since by $\rho/2 \leq r_1<r_2\leq \rho$
\begin{equation}\label{e.R}
\frac{r_2-r_1}{r_1}=\frac{r_2}{r_1}-1 \leq 1 \iff 1\leq \frac{r_2}{r_1} \leq \mathcal{R}_{r_1,r_2}
\end{equation}
Lemma~\ref{t.useful lemma} yields that
\begin{equation}\label{Reverse eq.2}
\mathbf{I} \leq C\mathcal{R}_{r_1,r_2}^2 \dashint_{Q_{r_2}^\lambda}\frac{\bigl|u-(u)_{Q_{r_2}^\lambda}\bigr|^p}{r_2^p}\d{x}\d{t},
\end{equation}
where we have used by~\eqref{e.R} that $
|Q_{r_2}^\lambda| / |Q_{r_1}^\lambda| \leq 2^d\mathcal{R}_{r_1,r_2}^2$. Next, turning to the estimate of $\mathbf{II}$, Lemma~\ref{t.useful lemma} and a simple algebra $r_2^2-r_1^2 \geq r_2^2\mathcal{R}_{r_1,r_2}^{-2}$ yield that
\begin{equation}\label{Reverse eq.3}
\mathbf{II} \leq C\lambda^{p-2}\mathcal{R}_{r_1,r_2}^2\dashint_{Q_{r_2}^\lambda}\frac{\bigl(u-(u)_{Q_{r_2}^\lambda}\bigr)^2}{r_2^2}\d{x}\d{t}.
\end{equation}
On the other hand, since for any $x, y \in B_{r_2}$ 
\[
\int_{B_{r_2}}\frac{\d{y}}{|x-y|^{d-(1-s)p}} \leq \int_{B_{2r_2}(x)}\frac{\d{y}}{|x-y|^{d-(1-s)p}}=\frac{c(d)}{(1-s)p}(2r_2)^{(1-s)p},
\]
we apply Lemma~\ref{t.useful lemma} again to get
\begin{equation}\label{Reverse eq.4}
\mathbf{IV} \leq Cr_2^{(1-s)p}\dashint_{Q_{r_2}^\lambda}\frac{\bigl|u-(u)_{Q_{r_1}^\lambda}\bigr|^p}{(r_2-r_1)^p}\d{x}\d{t} \leq C\mathcal{R}_{r_1,r_2}^2\dashint_{Q_{r_2}^\lambda}\frac{\bigl|u-(u)_{Q_{r_2}^\lambda}\bigr|^p}{(r_2-r_1)^p}\d{x}\d{t}
\end{equation}
with a constant $C(\data)<\infty$, where we also used
$r_2^{(1-s)p}\leq 1$ and $|Q_{r_2}^\lambda|/|Q_{r_1}^\lambda| \leq 2^d\mathcal{R}_{r_1,r_2}^2$. 

Finally, we will estimate $\mathbf{V}$. Since
\[
\frac{|y|}{|y-x|}\leq 1+\frac{|x|}{|y-x|} \leq 1+\frac{r_1+r_2}{2|y-x|} \qquad \mbox{and} \qquad |y-x| \geq |y|-|x| \geq \frac{r_2-r_1}{2}
\]
whenever $x \in B_{(r_1+r_2)/2}$ and $y \in \R^d \setminus B_{r_2}$, we obtain that
\[
\frac{|y|}{|y-x|}\leq 1+\frac{r_1+r_2}{r_2-r_1}=2\mathcal{R}_{r_1,r_2}\,.
\]
Young's inequality with the exponents $\left(\frac{p}{p-1},p\right)$ yields that, for every $\tilde{\delta}>0$, 
\begin{align}\label{Reverse eq.5}
\mathbf{V}
& \leq   C_1\mathcal{R}_{r_1,r_2}^{d+sp} \dashint_{I^\lambda_{r_2}}\left(r_2\int_{\R^d \setminus B_{r_2}}\frac{\bigl|u(y,t)-(u)_{Q_{r_1}^\lambda}\bigr|^{p-1}}{|y|^{d+sp}}\d{y}\right)\left(\dashint_{B_{r_2}}\frac{\bigl|u-(u)_{Q_{r_1}^\lambda}\bigr|}{r_2}\d{x}\right)\d{t} \notag\\[2mm]
&\leq \tilde{\delta} \dashint_{I^\lambda_{r_2}}\left(r_2 \int_{\R^d \setminus B_{r_2}}\frac{\bigl|u(y,t)-(u)_{Q_{r_1}^\lambda}\bigr|^{p-1}}{|y|^{d+sp}}\d{y}\right)^{\nicefrac{p}{(p-1)}}\d{t}  \notag\\[2mm]
&\quad \quad \quad \quad +C_1\tilde{\delta}^{1-p}\mathcal{R}_{r_1,r_2}^{p(d+sp)}\dashint_{I_{r_2}^\lambda}\left(\dashint_{B_{r_2}}\left|\frac{u-(u)_{Q_{r_2}^\lambda}}{r_2}\right|\d{x}\right)^p\d{t} \notag\\
&=:\mathbf{V}_1+\mathbf{V}_2\,,
\end{align}
for a constant $C_1(\data)<\infty$, where
\begin{equation*}
\left\{
\begin{aligned}
& \mathbf{V}_1:=\tilde{\delta} \dashint_{I^\lambda_{r_2}}\left(r_2 \int_{\R^d \setminus B_{r_2}}\frac{\bigl|u(y,t)-(u)_{Q_{r_1}^\lambda}\bigr|^{p-1}}{|y|^{d+sp}}\d{y}\right)^{\nicefrac{p}{(p-1)}}\d{t}\,,\\[2mm] 
& \mathbf{V}_2:=C_1\tilde{\delta}^{1-p}\mathcal{R}_{r_1,r_2}^{p(d+sp)}\dashint_{I_{r_2}^\lambda}\left(\dashint_{B_{r_2}}\left|\frac{u-(u)_{Q_{r_2}^\lambda}}{r_2}\right|\d{x}\right)^p\d{t}\,.
\end{aligned}
\right.
\end{equation*}
 We continue estimating, by H\"{o}lder's inequality and~\eqref{formula}, that
\begin{align*}
\dashint_{I_{r_2}^\lambda}&\left(r_2\int_{\R^d \setminus B_{r_2}}\frac{\bigl|u(y,t)-(u)_{Q_{r_1}^\lambda}\bigr|^{p-1}}{|y|^{d+sp}}\d{y}\right)^{\nicefrac{p}{(p-1)}}\d{t}\\
&\leq \dashint_{I_{r_2}^\lambda}r_2^{\nicefrac{p}{(p-1)}}\left(\int_{\R^d \setminus B_{r_2}}\frac{\bigl|u(y,t)-(u)_{Q_{r_1}^\lambda}\bigr|^{p}}{|y|^{d+sp}}\d{y}\right)\left(\int_{\R^d \setminus B_{r_2}}\frac{\d{y}}{|y|^{d+sp}}\right)^{\nicefrac{1}{(p-1)}}\d{t}\\
&\!\!\!\stackrel{\eqref{formula}}{=}c(d,s,p)r_2^{-\frac{sp}{p-1}}\dashint_{I_{r_2}^\lambda}\int_{\R^d \setminus B_{r_2}}\frac{\bigl|u(y,t)-(u)_{Q_{r_1}^\lambda}\bigr|^{p}}{|y|^{d+sp}}\d{y}\d{t}\\
&\!\lesssim \,\,r_2^{\frac{(1-s)p}{p-1}}\dashint_{I_{r_2}^\lambda}\int_{\R^d \setminus B_{r_2}}\frac{\bigl|u(y,t)-(u)_{Q_{\rho}^\lambda}\bigr|^{p}}{|y|^{d+sp}}\d{y}\d{t} \\
&\quad \quad \quad \quad \quad \quad \quad+r_2^{\frac{(1-s)p}{p-1}}\bigl|(u)_{Q_{\rho}^\lambda}-(u)_{Q_{r_1}^\lambda}\bigr|^{p}\int_{\R^d \setminus B_{\rho/2}}\frac{\d{y}}{|y|^{d+sp}}\\
&\!\!\!\stackrel{\eqref{formula}}{\lesssim} \,\,r_2^{\frac{(1-s)p}{p-1}}\dashint_{I_{r_2}^\lambda}\int_{\R^d \setminus B_{r_2}}\frac{\bigl|u(y,t)-(u)_{Q_{\rho}^\lambda}\bigr|^{p}}{|y|^{d+sp}}\d{y}\d{t} \\
&\quad\quad \quad \quad \quad \quad \quad+r_2^{\frac{(1-s)p}{p-1}}\rho^{-sp}\left|(u)_{Q_{\rho}^\lambda}-(u)_{Q_{r_1}^\lambda}\right|^{p}
\end{align*}
with a constant implicit in ``$\lesssim$'' depending on $d,s$ and $p$.  This, together with the estimate
\begin{align*}
\left|(u)_{Q_\rho^\lambda}-(u)_{Q_{r_1}^\lambda}\right| &\leq \frac{|Q_\rho^\lambda|}{|Q_{r_1}^\lambda|}\dashint_{Q_{\rho}^\lambda}\bigl|u-(u)_{Q_\rho^\lambda}\bigr|\d{x}\d{t} \leq 2^{d+2}\rho\left(\dashint_{Q_{\rho}^\lambda}\left|\frac{u-(u)_{Q_\rho^\lambda}}{\rho}\right|^{\kappa p}\d{x}\d{t}\right)^{\nicefrac{1}{\kappa p}}\,,
\end{align*}
yields that
\begin{align}\label{Reverse eq.6}
\mathbf{V}_1\leq c\tilde{\delta}\rho^{\frac{(1-s)p}{p-1}}\dashint_{I_{\rho}^\lambda}\int_{\R^d \setminus B_{\rho/2}}\frac{\bigl|u(y,t)-(u)_{Q_{\rho}^\lambda}\bigr|^{p}}{|y|^{d+sp}}\d{y}\d{t}+c\tilde{\delta}\rho^{\frac{(1-s)p^2}{p-1}}\left(\dashint_{Q_{\rho}^\lambda}\left|\frac{u-(u)_{Q_\rho^\lambda}}{\rho}\right|^{\kappa p}\d{x}\d{t}\right)^{\frac{1}{\kappa}}
\end{align}
with $c(d,s,p)<\infty$, where $\kappa$ is so that $\kappa p \geq q_\ast$. By Lemma~\ref{SP1}, we find that 
\begin{align}\label{e.reverse starting point}
&\left(\dashint_{Q_{\rho}^\lambda}\left|\frac{u-(u)_{Q_\rho^\lambda}}{\rho}\right|^{\kappa p}\d{x}\d{t}\right)^{\frac{1}{\kappa p}} \notag\\
&\quad \quad \leq C_1 \lambda^{2-p}\left(\dashint_{Q_\rho^\lambda}\bigl( |\nabla u|^{\kappa p}+|F|^{\kappa p}\bigr)\d{x}\d{t}\right)^{\frac{p-1}{\kappa p}} \notag \\
&\quad \quad \quad+C_1 \lambda^{2-p} \left(\rho^\theta \dashint_{Q_\rho^\lambda}\cE[u]\d{x}\d{t}\right)^{\frac{p-1}{p}}+C_1\left(\dashint_{Q_\rho^\lambda}|\nabla u|^{\kappa p}\d{x}\d{t}\right)^{\frac{1}{\kappa p}}
\end{align}
with a constant $C_1(\data)<\infty$. We now consider two cases $p\geq 2$ and $\frac{2d}{d+2} < p < 2$. When handling $p\geq 2$, the $\lambda$-intrinsic geometry condition~\eqref{intrinsic 1}$_1$ and H\"{o}lder's inequality assure that
\[
\left[\frac{1}{K}\left(\dashint_{Q_\rho^\lambda}\bigl( |\nabla u|^{\kappa p}+|F|^{\kappa p}\bigr)\d{x}\d{t}\right)^{\nicefrac{1}{\kappa}}\right]^{\nicefrac{1}{p}} \leq \lambda\,,
\]
which, together with the H\"{o}lder inequality, gives
\begin{align}\label{Reverse eq.7}
\left(\dashint_{Q_{\rho}^\lambda}\left|\frac{u-(u)_{Q_\rho^\lambda}}{\rho}\right|^{\kappa p}\d{x}\d{t}\right)^{\frac{1}{\kappa p}} &\leq C_2 \left(\dashint_{Q_\rho^\lambda}|\nabla u|^{\kappa p}\d{x}\d{t}\right)^{\frac{1}{\kappa p}}+C_2 \left(\dashint_{Q_\rho^\lambda}\bigl(|F|^p+1\bigr)\d{x}\d{t}\right)^{\nicefrac{1}{p}} \notag\\
& \quad \quad \quad+C_2 \lambda^{2-p} \left(\rho^\theta \dashint_{Q_\rho^\lambda}\cE[u]\d{x}\d{t}\right)^{\frac{p-1}{p}}\,.
\end{align}
On the one hand, in the latter case $\frac{2d}{d+2}<p < 2$, by~\eqref{intrinsic 1}$_2$ and Young's inequality with the exponent $\left(\frac{1}{2-p}, \frac{1}{p-1}\right)$, we obtain from~\eqref{e.reverse starting point} that
\begin{align}\label{Reverse eq.8}
&\left(\dashint_{Q_{\rho}^\lambda}\left|\frac{u-(u)_{Q_\rho^\lambda}}{\rho}\right|^{\kappa p}\d{x}\d{t}\right)^{\frac{1}{\kappa p}} \notag\\
&\quad \quad \leq \frac{1}{2}\left(\dashint_{Q_{\rho/2}^\lambda }\big(|\nabla u|^p+|F|^p\big)\d{x}\d{t}\right)^{\nicefrac{1}{p}}+C_3\left(\dashint_{Q_\rho^\lambda }\bigl(|\nabla u|^{\kappa p}+|F|^{\kappa p} \bigr)\d{x}\d{t}\right)^{\nicefrac{1}{\kappa p}}\notag\\
&\quad \quad \quad \quad \quad \quad +C_3\lambda^{2-p} \left(\rho^\theta \dashint_{Q_\rho^\lambda}\cE[u]\d{x}\d{t}\right)^{\frac{p-1}{p}}\notag\\
&\quad \quad \leq \frac{1}{2}\left(\dashint_{Q_{\rho/2}^\lambda }|\nabla u|^p\d{x}\d{t}\right)^{\nicefrac{1}{p}}+C_3\left(\dashint_{Q_\rho^\lambda }|\nabla u|^{\kappa p}\d{x}\d{t}\right)^{\nicefrac{1}{\kappa p}}\notag\\
& \quad \quad \quad \quad \quad \quad +C_3\left(\dashint_{Q_\rho^\lambda }\bigl(|F|^{p} +1\bigr)\d{x}\d{t}\right)^{\nicefrac{1}{p}}+C_3\lambda^{2-p} \left(\rho^\theta \dashint_{Q_\rho^\lambda}\cE[u]\d{x}\d{t}\right)^{\frac{p-1}{p}}\,,
\end{align}
where to obtain the last line we used H\"{o}lder's inequality. As a result, the combination of the previous displays~\eqref{Reverse eq.7} and~\eqref{Reverse eq.8} yields that, for all $p>\frac{2d}{d+2}$, 
\begin{align}\label{Reverse key}
&\left(\dashint_{Q_{\rho}^\lambda}\left|\frac{u-(u)_{Q_\rho^\lambda}}{\rho}\right|^{\kappa p}\d{x}\d{t}\right)^{\frac{1}{\kappa p}} \notag\\
&\quad \quad \leq \frac{1}{2}\left(\dashint_{Q_{\rho/2}^\lambda }|\nabla u|^p\d{x}\d{t}\right)^{\nicefrac{1}{p}}+C_3\left(\dashint_{Q_\rho^\lambda }|\nabla u|^{\kappa p}\d{x}\d{t}\right)^{\nicefrac{1}{\kappa p}} \notag\\
&\quad \quad+C_3\left(\dashint_{Q_\rho^\lambda }\bigl(|F|^{p} +1\bigr)\d{x}\d{t}\right)^{\nicefrac{1}{p}}+C_3\lambda^{2-p} \left(\rho^\theta \dashint_{Q_\rho^\lambda}\cE[u]\d{x}\d{t}\right)^{\frac{p-1}{p}}\,,
\end{align}
where $C_3(\data, K)<\infty$. Thus, combining~\eqref{Reverse eq.6} and~\eqref{Reverse key} with~\eqref{Reverse eq.5} and applying the H\"{o}lder inequality to $\mathbf{V}_2$, we have that
\begin{align*}
\mathbf{V} &\leq C_4\tilde{\delta}\left(\rho^{\frac{(1-s)p}{p-1}}\dashint_{I_{\rho}^\lambda}\int_{\R^d \setminus B_{\rho/2}}\frac{\bigl|u(y,t)-(u)_{Q_{\rho}^\lambda}\bigr|^{p}}{|y|^{d+sp}}\d{y}\d{t}\right) \notag\\[2mm]
&\quad+C_4\tilde{\delta}\underbrace{\rho^{\frac{(1-s)p^2}{p-1}}}_{\leq 1}\left[\frac{1}{2^p}\,\dashint_{Q_{\rho/2}^\lambda} |\nabla u|^p\d{x}\d{t}+ C_3^p\left(\dashint_{Q_\rho^\lambda}|\nabla u|^{\kappa p}\d{x}\d{t}\right)^{\nicefrac{1}{\kappa}}\right. \\
&\quad \quad \quad \quad \quad \quad +\left. C_3^p\dashint_{Q_\rho^\lambda}\bigl(|F|^{p}+1 \bigr)\d{x}\d{t}+C_3^p\lambda^{p(2-p)} \left(\rho^\theta \dashint_{Q_\rho^\lambda}\cE[u]\d{x}\d{t}\right)^{p-1}\right]\\
&\quad  +C_4\tilde{\delta}^{1-p}\mathcal{R}_{r_1,r_2}^{p(d+sp)}\dashint_{Q_{r_2}^\lambda}\left|\frac{u-(u)_{Q_{r_2}^\lambda}}{r_2}\right|^p\d{x}\d{t}\,.
\end{align*}
Select $\tilde{\delta}>0$ so that $C_4\tilde{\delta} = \delta/2 $ for given $\delta_0 \in (0,1]$ and then rearrange to obtain that
\begin{align}\label{Reverse eq.9}
\mathbf{V} &\leq \frac{\delta_0}{2} \left(\rho^{\frac{(1-s)p}{p-1}}\dashint_{I_{\rho}^\lambda}\int_{\R^d \setminus B_{\rho/2}}\frac{\bigl|u(y,t)-(u)_{Q_{\rho}^\lambda}\bigr|^{p}}{|y|^{d+sp}}\d{y}\d{t}\right)+\frac{\delta_0}{4}\dashint_{Q_{\rho/2}^\lambda } |\nabla u|^p\d{x}\d{t}\notag\\
&\quad \quad \quad \quad +C_5\left(\dashint_{Q_\rho^\lambda }|\nabla u|^{\kappa p}\d{x}\d{t}\right)^{1/\kappa}+C_5\dashint_{Q_\rho^\lambda}\bigl(|F|^p+1\bigr)\d{x}\d{t} \notag\\
&\quad \quad \quad \quad +C_5\lambda^{p(2-p)} \left(\rho^\theta \dashint_{Q_\rho^\lambda}\cE[u]\d{x}\d{t}\right)^{p-1} \notag\\
&\quad \quad \quad \quad+C_5\delta_0^{1-p}\mathcal{R}_{r_1,r_2}^{p(d+sp)}\dashint_{Q_{r_2}^\lambda}\left|\frac{u-(u)_{Q_{r_2}^\lambda}}{r_2}\right|^p\d{x}\d{t}
\end{align}
for a constant $C_5(\data, K)<\infty$.
Merging the previous four estimates~\eqref{Reverse eq.2}--\eqref{Reverse eq.4} and~\eqref{Reverse eq.9} with~\eqref{Reverse eq.1}, we obtain that
\begin{align}\label{Reverse eq.10}
&\sup_{t \in I^\lambda_{r_1}}\dashint_{B_{r_1}}\frac{\bigl|u(t)-(u)_{Q_{r_1}^\lambda}\bigr|^2}{\lambda^{2-p}r_1^2}\d{x}+\dashint_{Q_{r_1}^\lambda}|\nabla u|^p\d{x}\d{t} \leq \bm{\cA}_1+\bm{\cA}_2+\bm{\cA}_3+\bm{\cB}\,,
\end{align}
where we denoted, for short, for $C_5, C_6(\data, K) <\infty$, 
\begin{equation*}
\left\{
\begin{aligned}
& \bm{\cA}_1:=C_6\mathcal{R}_{r_1,r_2}^2 \dashint_{Q_{r_2}^\lambda}\frac{\bigl|u-(u)_{Q_{r_2}^\lambda}\bigr|^p}{r_2^p}\d{x}\d{t}\,, \\[2mm]
& \bm{\cA}_2:=C_6\delta_0^{1-p}\mathcal{R}_{r_1,r_2}^{p(d+sp)}\dashint_{Q_{r_2}^\lambda}\frac{\bigl|u-(u)_{Q_{r_2}^\lambda}\bigr|^p}{r_2^p}\d{x}\d{t}\,,\\[2mm]
& \bm{\cA}_3:=C_6\lambda^{p-2}\mathcal{R}_{r_1,r_2}^2\dashint_{Q_{r_2}^\lambda}\frac{\bigl|u-(u)_{Q_{r_2}^\lambda}\bigr|^2}{r_2^2}\d{x}\d{t}\,, \quad \mbox{and} 
\\[2mm]
& \bm{\cB}:=\frac{\delta_0}{2} \left(\rho^{\frac{(1-s)p}{p-1}}\dashint_{I_{\rho}^\lambda}\int_{\R^d \setminus B_{\rho/2}}\frac{\bigl|u(y,t)-(u)_{Q_{\rho}^\lambda}\bigr|^{p}}{|y|^{d+sp}}\d{y}\d{t}\right)+\frac{\delta_0}{4}\dashint_{Q_{\rho/2}^\lambda } |\nabla u|^p\d{x}\d{t}\\
& \quad \quad  +C_5\left(\dashint_{Q_\rho^\lambda }|\nabla u|^{\kappa p}\d{x}\d{t}\right)^{\nicefrac{1}{\kappa}}+C_5\dashint_{Q_\rho^\lambda}\bigl(|F|^p+1\bigr)\d{x}\d{t} \\
&\quad \quad +C_5\lambda^{p(2-p)} \left(\rho^\theta \dashint_{Q_\rho^\lambda}\cE[u]\d{x}\d{t}\right)^{p-1}\,.
\end{aligned}
\right.
\end{equation*}
\medskip

\emph{Step 2: Estimate of $\bm{\cA}_1$--$\bm{\cA}_3$ in~\eqref{Reverse eq.10}.} Appealing to Lemma~\ref{SP2} and using $\cR_{r_1,r_2} \geq 1$, we deduce that, for any $\delta_0 \in (0,1]$ and $\eps \in (0,1]$,
\begin{align}\label{Reverse eq.11}
\bm{\cA}_1+\bm{\cA}_2  &\leq C_6\delta_0^{1-p}\left(\mathcal{R}_{r_1,r_2}^{2+p}+\mathcal{R}_{r_1,r_2}^{p(d+sp)}\right)\dashint_{Q_{r_2}^\lambda}\left|\frac{u-(u)_{Q_{r_2}^\lambda}}{r_2}\right|^p\d{x}\d{t} \notag\\
\quad &\leq  C_6\delta_0^{1-p}\mathcal{R}_{r_1,r_2}^{p(d+p)}\eps \left(\sup_{t \in I_{r_2}^\lambda}\dashint_{B_{r_2}}\frac{\bigl|u(t)-(u)_{Q_{r_2}^\lambda}\bigr|^2}{\lambda^{2-p} r_2^2}\d{x} +\dashint_{Q_{r_2}^\lambda}|\nabla u|^p\d{x}\d{t}\right) \notag\\
\quad &\quad +C_7\delta_0^{1-p}\mathcal{R}_{r_1,r_2}^{p(d+p)}\eps^{-\bm{\gamma}_p}\left[\left(\dashint_{Q_{r_2}^\lambda}|\nabla u|^{\kappa p}\d{x}\d{t}\right)^{\nicefrac{1}{\kappa}}+\bm{\mathfrak{T}}_{p,\lambda}(r_2)\left(r_2^\theta \dashint_{Q_{r_2}^\lambda}\cE[u]\d{x}\d{t}\right) \right.\notag\\
&\quad \quad \quad \quad \quad \quad \quad \quad \quad \quad \quad \quad \quad \quad \quad \quad \quad \quad \quad \quad  \,\,\,\left.+\dashint_{Q_{r_2}^\lambda}|F|^p\d{x}\d{t}\right]\,,
\end{align}
where $C_7(\data, K)<\infty$, and $\bm{\mathfrak{T}}_{p, \lambda}(r)$ is given by
\[
\bm{\mathfrak{T}}_{p, \lambda}(r):=
\left\{
\begin{aligned}
&\frac{1}{\lambda^{p \sigma_0}}\left(r^\theta \dashint_{Q_{r}^\lambda}\cE[u]\d{x}\d{t}\right)^{\sigma_0}  &\mbox{if} \quad & p \geq 2\,, \\
&1  &\mbox{if} \quad & 1<p<2\,,
\end{aligned}
\right.
\]
with $\sigma_0:=\frac{(p-2)(1+\kappa)}{2-(1-\kappa)p}$ for short. By definition, it is straightforward to check that
\begin{equation}\label{e.reverse T}
\bm{\mathfrak{T}}_{p, \lambda}(r_2) \leq 2^{(d+2)\sigma_0}\bm{\mathfrak{T}}_{p, \lambda}(\rho)\,,
\end{equation}
and thus
\begin{equation}\label{e.reverse E}
\bm{\mathfrak{T}}_{p, \lambda}(r_2) \left(r_2^\theta \dashint_{Q_{r_2}^\lambda}\cE[u]\d{x}\d{t}\right) \leq 2^{(d+2)(\sigma_0+1)}\bm{\mathfrak{T}}_{p, \lambda}(\rho)\left(\rho^\theta \dashint_{Q_{\rho}^\lambda}\cE[u]\d{x}\d{t}\right) \,.
\end{equation}
On the other hand, Lemma~\ref{SP3} yields that 
\begin{align}\label{Reverse eq.12}
\bm{\cA}_3  & \leq C_8\mathcal{R}_{r_1,r_2}^2\eps \left(\sup_{t \in I_{r_2}^\lambda}\dashint_{B_{r_2}}\frac{\bigl|u(t)-(u)_{Q_{r_2}^\lambda}\bigr|^2}{\lambda^{2-p} r_2^2}\d{x} +\dashint_{Q_{r_2}^\lambda}|\nabla u|^p\d{x}\d{t}\right)\notag\\
&\quad \quad +C_8\mathcal{R}_{r_1,r_2}^2\eps^{-\left(\frac{p}{2} \vee \frac{1}{p-1}\right)}\left[\left(\dashint_{Q_{r_2}^\lambda}|\nabla u|^{\kappa p}\d{x}\d{t}\right)^{\nicefrac{1}{\kappa}}+\bm{\mathfrak{T}}_{p,\lambda}(r_2)\left(r_2^\theta \dashint_{Q_{r_2}^\lambda}\cE[u]\d{x}\d{t}\right) \right. \notag\\
&\quad \quad \quad \quad \quad \quad \quad \quad \quad \quad \quad \quad \quad \quad \quad \quad \quad \quad \quad \quad  \,\,\,\left.+\dashint_{Q_{r_2}^\lambda}|F|^p\d{x}\d{t}\right]\,.
\end{align}
for a constant $C_8(\data,K)<\infty$. Combining~\eqref{Reverse eq.11},~\eqref{e.reverse E}~and~\eqref{Reverse eq.12} with~\eqref{Reverse eq.10} and rearranging, we obtain, for a constant $C_{9}(\data,K)<\infty$, 
\begin{align*}
\sup_{t \in I^\lambda_{r_1}}&\dashint_{B_{r_1}}\frac{\bigl|u(t)-(u)_{Q_{r_1}^\lambda}\bigr|^2}{\lambda^{2-p}r_1^2}\d{x}+\dashint_{Q_{r_1}^\lambda}|\nabla u|^p\d{x}\d{t} \\[2mm]
&\leq C_{9}\delta_0^{1-p}\mathcal{R}_{r_1,r_2}^{p(d+p)}\eps\left(\sup_{t \in I_{r_2}^\lambda}\dashint_{B_{r_2}}\frac{\bigl|u(t)-(u)_{Q_{r_2}^\lambda}\bigr|^2}{\lambda^{2-p} r_2^2}\d{x}+\dashint_{Q_{r_2}^\lambda}|\nabla u|^p\d{x}\d{t}\right) \notag \\[2mm]
&\quad \quad +C_{9}\delta_0^{1-p}\mathcal{R}_{r_1,r_2}^{p(d+p)}\eps^{-\bm{\beta}_p}\bm{\cE}+\bm{\cB}\,,
\end{align*}
where $\bm{\beta}_p:=\max\{\bm{\gamma}_p, \frac{p}{2}, \frac{1}{p-1}\}>0$ and, to shorten the notation, we wrote
\[
\bm{\cE}:=\left(\dashint_{Q_{\rho}^\lambda}|\nabla u|^{\kappa p}\d{x}\d{t}\right)^{\nicefrac{1}{\kappa}}+\bm{\mathfrak{T}}_{p,\lambda}(\rho)\left(\rho^\theta \dashint_{Q_{\rho}^\lambda}\cE[u]\d{x}\d{t}\right)+\dashint_{Q_{\rho}^\lambda}\bigl(|F|^p+1\bigr)\d{x}\d{t}\,.
\]
We choose $\eps \in (0,1]$ so small that
\[
C_{9}\delta_0^{1-p}\mathcal{R}_{r_1,r_2}^{p(d+p)}\eps = \frac{1}{2} \quad \iff \quad \eps =\frac{\delta_0^{p-1}}{2C_{9}\mathcal{R}_{r_1,r_2}^{p(d+p)}}\,.
\]
Thus, the previous display implies that, for a constant $C_{10}(\data, K)<\infty$,
%
\begin{align*}
\sup_{t \in I^\lambda_{r_1}}&\dashint_{B_{r_1}}\frac{\bigl|u(t)-(u)_{Q_{r_1}^\lambda}\bigr|^2}{\lambda^{2-p}r_1^2}\d{x}+\dashint_{Q_{r_1}^\lambda}|\nabla u|^p\d{x}\d{t} \\[2mm]
&\leq \frac{1}{2}\left(\sup_{t \in I_{r_2}^\lambda}\dashint_{B_{r_2}}\frac{\bigl|u(t)-(u)_{Q_{r_2}^\lambda}\bigr|^2}{\lambda^{2-p} r_2^2}\d{x}+\dashint_{Q_{r_2}^\lambda}|\nabla u|^p\d{x}\d{t}\right) \notag \\[2mm]
&\quad \quad +\frac{C_{10}\delta_0^{(1-p)(1+\bm{\beta}_p)}\rho^{p(d+p)(1+\bm{\beta}_p)}}{(r_2-r_1)^{p(d+p)(1+\bm{\beta}_p)}}\bm{\cE}+\bm{\cB}\,,
\end{align*}
therefore appealing to Lemma~\ref{t.iteration}, we deduce that,
\begin{align*}
\sup_{t \in I^\lambda_{\rho/2}}&\dashint_{B_{\rho/2}}\frac{\bigl|u(t)-(u)_{Q_{\rho/2}^\lambda}\bigr|^2}{\lambda^{2-p}(\rho/2)^2}\d{x}+\dashint_{Q_{\rho/2}^\lambda}|\nabla u|^p\d{x}\d{t} \\
&\leq C_{11}\delta^{-\bm{\alpha}_p}\bm{\cE}+C_{12}\bm{\cB}\\[2mm]
&=C_{11}\delta^{-\bm{\alpha}_p}\bm{\cE}+\frac{C_{12}\delta_0}{4}\dashint_{Q_{\rho/2}^\lambda } |\nabla u|^p\d{x}\d{t} \\
&\quad \quad +\frac{C_{11}\delta_0}{2} \left(\rho^{\frac{(1-s)p}{p-1}}\dashint_{I_{\rho}^\lambda}\int_{\R^d \setminus B_{\rho/2}}\frac{\bigl|u(y,t)-(u)_{Q_{\rho}^\lambda}\bigr|^{p}}{|y|^{d+sp}}\d{y}\d{t}\right)\\
& \quad \quad  +C_{12}\underbrace{\left[\left(\dashint_{Q_\rho^\lambda }|\nabla u|^{\kappa p}\d{x}\d{t}\right)^{\nicefrac{1}{\kappa}} +\dashint_{Q_\rho^\lambda}\bigl(|F|^p+1\bigr)\d{x}\d{t}\right]}_{\leq \bm{\cE}} \\
&\quad \quad +C_{12}\lambda^{p(2-p)} \left(\rho^\theta \dashint_{Q_\rho^\lambda}\cE[u]\d{x}\d{t}\right)^{p-1}
\end{align*}
for constants $C_{11}, C_{12}(\data, K)<\infty$ because $\bm{\beta}_p$ depends only on $p$, where $\bm{\alpha}_p:=(p-1)(1+\bm{\beta}_p)>0$. 
Selecting $\delta_0$ so small that $C_{11}\delta_0/4 =1/2$ and reabsorbing then leads to
\begin{align}\label{Reverse eq.13}
\dashint_{Q_{\rho/2}^\lambda}|\nabla u|^p\d{x}\d{t} &\leq C_{13}\bm{\cE}+C_{13}\left(\rho^{\frac{(1-s)p}{p-1}}\dashint_{I_{\rho}^\lambda}\int_{\R^d \setminus B_{\rho/2}}\frac{\bigl|u(x,t)-(u)_{Q_{\rho}^\lambda}\bigr|^{p}}{|x|^{d+sp}}\d{x}\d{t}\right) \notag\\
&\quad \quad \quad  +C_{13}\lambda^{p(2-p)} \left(\rho^\theta \dashint_{Q_\rho^\lambda}\cE[u]\d{x}\d{t}\right)^{p-1}\,.
\end{align}
\smallskip

\emph{Step 3: The conclusion.} In the case $\frac{2d}{d+2}<p<2$, we estimate the last integral on the right side of~\eqref{Reverse eq.13}. By H\"{o}lder's inequality with $\left(\frac{1}{2-p}, \frac{1}{p-1}\right)$ and~\eqref{intrinsic 1}$_2$, we find that
\[
C_{13}\lambda^{p(2-p)} \left(\rho^\theta \dashint_{Q_\rho^\lambda}\cE[u]\d{x}\d{t}\right)^{p-1} \leq \frac{1}{2}\dashint_{Q_{\rho/2}^\lambda}\bigl(|\nabla u|^p+|F|^p\bigr)\d{x}\d{t}+C\rho^\theta \dashint_{Q_\rho^\lambda}\cE[u]\d{x}\d{t}\,.
\]
After plugging this into the previous display~\eqref{Reverse eq.13}, reabsorbing and reorganizing then yields that, for a constant $C(\data, K)<\infty$,
\begin{align}\label{Reverse eq.14}
\dashint_{Q_{\rho/2}^\lambda}|\nabla u|^p\d{x}\d{t} &\leq C\bm{\cE}+C\left(\rho^{\frac{(1-s)p}{p-1}}\dashint_{I_{\rho}^\lambda}\int_{\R^d \setminus B_{\rho/2}}\frac{\bigl|u(x,t)-(u)_{Q_{\rho}^\lambda}\bigr|^{p}}{|x|^{d+sp}}\d{x}\d{t}\right) \notag\\
&\quad \quad \quad  +C\rho^\theta \dashint_{Q_\rho^\lambda}\cE[u]\d{x}\d{t}\,.
\end{align}
This is the desired result in the case $\frac{2d}{d+2}<p<2$.

Finally, define
\[
\bm{\mathfrak{U}}_{p, \lambda}(\rho):=
\left\{
\begin{aligned}
&\frac{1}{\lambda^{p \sigma_0}}\left(\rho^\theta \dashint_{Q_{\rho}^\lambda}\cE[u]\d{x}\d{t}\right)^{\sigma_0}  + \frac{1}{\lambda^{p(p-2)}}\left(\rho^\theta \dashint_{Q_\rho^\lambda}\cE[u]\d{x}\d{t}\right)^{p-2}&\mbox{if} \quad & p \geq 2\,, \\
&1  &\mbox{if} \quad & \tfrac{2d}{d+2}<p<2\,,
\end{aligned}
\right.
\]
with $\sigma_0:=\frac{(p-2)(1+\kappa)}{2-(1-\kappa)p}$. Consequently, unifying~\eqref{Reverse eq.13} and~\eqref{Reverse eq.14} and reorganizing complete the proof of Lemma~\ref{t.Reverse Holder}.
\end{proof}

Finally, we state a ``$p$-tail''-controlled inequality, which plays a crucial role in Section~\ref{Sect.5}.
\begin{lemma}[$p$-tail-controlled inequality]\label{t.p-tail-controlled inequality}
Fix $p>\frac{2d}{d+2}$ and $ s \in (0,1)$ and set $\theta:=\frac{(1-s)p}{p-1}$. Let $u$ be a weak solution to~\eqref{maineq} in the sense of Definition~\ref{dfn u}, under the assumptions~(A1)--(A4) and $F \in L^p_{\mathrm{loc}}\left(\Omega_T\,;\R^d\right)$. Then for every space-time cylinder $Q_{\rho}^\lambda(z_0) \Subset \Omega_T$ with $0<\rho \leq 1$ satisfying $\lambda$-intrinsic geometry conditions~\eqref{intrinsic 1}--\eqref{intrinsic 2}, there exists a constant $C_{\TAIL}(\data,K)<\infty$ such that
\begin{align}\label{e.p-tail-controlled inequality}
\rho^{\theta}\dashint_{I_{\rho}^\lambda(t_0)}&\int_{\R^d \setminus B_{\rho/2}(x_0)}\frac{\big|u(x,t)-(u)_{Q_{\rho}^\lambda(z_0)}\big|^{p}}{|x-x_0|^{d+sp}}\d{x}\d{t} \\\notag
&\leq C_{\TAIL}\rho^{\theta p}\lambda^p+C_{\TAIL}(1+\bm{\mathfrak{S}}_{p, \lambda}(\rho))\left(\rho^\theta \dashint_{Q_\rho^\lambda(z_0)}\cE[u](x,t)\d{x}\d{t}\right)\,,
\end{align}
where we denoted
\[
\cE[u](x,t):=\int_{\R^d} \frac{\big|u(y,t)-u(x,t)\big|^p}{|y-x|^{d+sp}}\d{y}
\]
and
\[
\bm{\mathfrak{S}}_{p, \lambda}(\rho):=
\left\{
\begin{aligned}
&\frac{1}{\lambda^{p(p-2)}}\left(\rho^\theta \dashint_{Q_\rho^\lambda}\cE[u]\d{x}\d{t}\right)^{p-2}&\mbox{if} \quad & p \geq 2\,, \\
&1  &\mbox{if} \quad & \tfrac{2d}{d+2}<p<2\,.
\end{aligned}
\right.
\]
\end{lemma}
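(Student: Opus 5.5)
We reduce the tail to the nonlocal energy by averaging over an inner ball. Take $z_0=0$ and write $k:=(u)_{Q_\rho^\lambda}$. For $x\in\R^d\setminus B_{\rho/2}$, $y\in B_{\rho/4}$ and $t\in I_\rho^\lambda$ we split
\[
|u(x,t)-k|^p\le 3^{p-1}\bigl(|u(x,t)-u(y,t)|^p+|u(y,t)-(u(t))_{B_{\rho/4}}|^p+|(u(t))_{B_{\rho/4}}-k|^p\bigr),
\]
and then average in $y$ over $B_{\rho/4}$. For such $x,y$ we have $|x-y|\le |x|+|y|\le 2|x|$, so $|x|^{-d-sp}\le 2^{d+sp}|x-y|^{-d-sp}$.

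\emph{The first term.} Integrating $|u(x,t)-u(y,t)|^p|x|^{-d-sp}$ over $x\notin B_{\rho/2}$ and averaging over $y\in B_{\rho/4}$ gives at most $C\dashint_{B_{\rho/4}}\cE[u](y,t)\d{y}$. After averaging in time and multiplying by $\rho^\theta$, this is bounded by $C\rho^\theta\dashint_{Q_\rho^\lambda}\cE[u]\d{x}\d{t}$, which already has the desired form.

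\emph{The second and third terms.} By \eqref{formula}, $\int_{\R^d\setminus B_{\rho/2}}|x|^{-d-sp}\d{x}\simeq\rho^{-sp}$, and $\rho^{\theta-sp}\le\rho^{\theta-p}\rho^{(1-s)p}\le \rho^{\theta-p}$ since $\rho\le1$. It therefore suffices to bound
\[
\rho^{\theta}\Bigl[\frac{\exc_p(u,Q_\rho^\lambda)}{\rho}\Bigr]^p
\]
up to harmless constants. Indeed, the middle term is dominated by the $\rho^{-p}$-scaled oscillation on $B_{\rho/4}$. The last term is at most $\dashint_{B_{\rho/4}}|u(t)-k|^p\d{y}$ by Jensen, hence at most $C$ times the excess via Lemma~\ref{t.useful lemma}.

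To bound this excess I would not use the Sobolev–Poincaré inequalities with a $\sup$ term; instead I would use the fractional Poincaré inequality directly in space. For each $t$,
\[
\dashint_{B_\rho}|u(t)-(u(t))_{B_\rho}|^p\d{x}\le C\rho^{sp}\dashint_{B_\rho}\cE[u](x,t)\d{x},
\]
so $\rho^{\theta-p}$ times the spatial oscillation is at most $C\rho^{\theta-(1-s)p}\,\dashint\cE\le C\rho^\theta\dashint\cE$. What remains is the temporal part $\dashint\dashint|(u(t))_{B_{\bar\rho}}-(u(\tau))_{B_{\bar\rho}}|^p$. Here I would apply Lemma~\ref{t.gluing} with exponent $p$. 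Its right-hand side consists of two pieces.

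\begin{enumerate}
\item[(a)] The gradient piece, $\lambda^{(2-p)p}\rho^{p}(\dashint(|\nabla u|^{p-1}+|F|^{p-1}))^{p}$. By H\"older and \eqref{intrinsic 2} this is at most $C\lambda^{(2-p)p}\rho^p\lambda^{p(p-1)}=C\rho^p\lambda^p$. Multiplying by $\rho^{\theta-p}$ gives $C\rho^\theta\lambda^p$.
\item[(b)] The nonlocal piece, which after multiplying by $\rho^{\theta-p}$ becomes $C\rho^\theta\lambda^{(2-p)p}\mathbf T^{p-1}$, where $\mathbf T:=\rho^\theta\dashint_{Q_\rho^\lambda}\cE[u]\d{x}\d{t}$.
\end{enumerate}

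Term (b) is the main obstacle, since it must be split into $\lambda^p$ and $\mathbf T$ with the correct structure. For $p<2$, Young's inequality with exponents $(\tfrac1{2-p},\tfrac1{p-1})$ gives $\lambda^{(2-p)p}\mathbf T^{p-1}\le\lambda^p+\mathbf T$. For $p\ge2$, we write $\lambda^{(2-p)p}\mathbf T^{p-1}=\mathbf T\cdot(\mathbf T/\lambda^p)^{p-2}=\bm{\mathfrak S}_{p,\lambda}(\rho)\mathbf T$, which is exactly the extra factor in the statement. Collecting the pieces and using $\rho^\theta\le1$ gives a bound of the form $C\rho^\theta\lambda^p+C(1+\bm{\mathfrak S})\mathbf T$.

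The stated $\rho^{\theta p}\lambda^p$ seems to require sharper bookkeeping. If the powers do not match, I would track the factor $\rho^{\theta}$ more carefully through the gradient piece, where H\"older only costs $\lambda^p$, and I would expect the author's exponent to follow from the same scheme with a more careful handling of the powers of $\rho$. The key points to verify are that the factor $\rho^{-p}$ is absorbed by the gluing factor $(\lambda^{2-p}\rho)^p$, and that the intrinsic geometry handles the $\lambda$-powers in both ranges of $p$.
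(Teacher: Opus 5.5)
There is a genuine gap. It comes from your reduction step, where you weaken the weight $\rho^{\theta-sp}$ to $\rho^{\theta-p}$ by throwing away $\rho^{(1-s)p}\le 1$. By \eqref{formula}, the second and third terms carry exactly $\rho^{\theta-sp}$, and the loss cannot be recovered later.

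First, it breaks your fractional Poincar\'e step. There you get $\rho^{\theta-p}\cdot C\rho^{sp}\dashint_{Q_\rho^\lambda}\cE[u]\d{x}\d{t}=C\rho^{-(1-s)p}\,\mathbf T$. The inequality $C\rho^{\theta-(1-s)p}\dashint\cE\le C\rho^{\theta}\dashint\cE$ that you assert goes the wrong way: $\rho\le1$ gives $\rho^{\theta-(1-s)p}\ge\rho^{\theta}$, so the constant degenerates as $\rho\to0$. Second, it is why the gradient piece of the gluing estimate only produces $C\rho^{\theta}\lambda^p$. The stated $C\rho^{\theta p}\lambda^p$ is strictly stronger for $\rho<1$. (The weaker form would still suffice in Section~\ref{Sect.5} after adjusting the choice of $R_0$, but it is not the lemma as stated.)

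The ``sharper bookkeeping'' you are looking for is simply not to discard that factor. Keep $\rho^{\theta-sp}$ throughout.
\begin{itemize}
\item The fractional Poincar\'e inequality then gives exactly $\rho^{\theta-sp}\cdot C\rho^{sp}\dashint\cE=C\mathbf T$.
\item The gluing factor $(\lambda^{2-p}\rho)^p$ leaves $\rho^{\theta-sp+p}=\rho^{\theta+(1-s)p}=\rho^{\theta p}$, by the identity $\theta+(1-s)p=\theta p$.
\item Hence the gradient piece is $C\rho^{\theta p}\lambda^p$, and the nonlocal piece is $C\rho^{\theta p}\lambda^{(2-p)p}\mathbf T^{p-1}$.
\item You handle the nonlocal piece exactly as you propose: Young for $p<2$, and $\lambda^{(2-p)p}\mathbf T^{p-1}=\bm{\mathfrak S}_{p,\lambda}(\rho)\mathbf T$ for $p\ge2$. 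Then discard $\rho^{\theta p}\le1$ in front of $\mathbf T$.
\end{itemize}
Also note that Lemma~\ref{t.gluing} controls differences of $(u(t))_{B_{\bar\rho}}$. You must therefore pass from $B_\rho$-averages to $B_{\bar\rho}$-averages first. This costs one more spatial oscillation, treated the same way.

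With this correction your argument is valid. It differs from the paper's only in how the spatial oscillations are treated. The paper splits the exterior into the annulus $B_\rho\setminus B_{\rho/2}$ and $\R^d\setminus B_\rho$, and compares with $(u(t))_{B_{\bar\rho}}$. It bounds every spatial oscillation by the gradient Poincar\'e inequality plus \eqref{intrinsic 1}, so these land in the $\rho^{\theta p}\lambda^p$ term. Your averaging over $B_{\rho/4}$ and the fractional Poincar\'e inequality put them into the $\mathbf T$ term instead. That avoids the annulus split and uses the intrinsic geometry only for the gradient piece of the gluing estimate.
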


\begin{proof}
By translation, we may let $z_0=0$ without loss of generality. For short, we set
\[
\mathbf{T}^p_\rho:=\rho^{\theta}\dashint_{I_{\rho}^\lambda}\int_{\R^d \setminus B_{\rho/2}}\frac{\big|u(y,t)-(u)_{Q_{\rho}^\lambda}\big|^{p}}{|y|^{d+sp}}\d{y}\d{t}\,.
\]
A straightforward computation yields that
\[
\mathbf{T}_\rho^p \leq C\left(\mathbf{S}_1+\mathbf{S}_2+\mathbf{S}_3\right)\,,
\]
where
\[
\left\{
\begin{aligned}
\mathbf{S}_1&:=\rho^{\theta}\dashint_{I_{\rho}^\lambda}\int_{B_{\rho} \setminus B_{\rho/2}}\frac{\big|u(y,t)-(u(t))_{B_{\bar{\rho}}}\big|^{p}}{|y|^{d+sp}}\d{y}\d{t}\,, \\
\mathbf{S}_2&:=\rho^{\theta}\dashint_{I_{\rho}^\lambda}\int_{\R^d \setminus B_{\rho}}\frac{\big|u(y,t)-(u(t))_{B_{\bar{\rho}}}\big|^{p}}{|y|^{d+sp}}\d{y}\d{t}\,, \quad \mbox{and} \quad\\
\mathbf{S}_3&:=\rho^{\theta}\dashint_{I_{\rho}^\lambda}\big|(u(t))_{B_{\bar{\rho}}}-(u)_{Q^\lambda_\rho}\big|^p\d{t} \int_{\R^d \setminus B_{\rho/2}} \frac{\d{y}}{|y|^{d+sp}}\,.
\end{aligned}
\right.
\]
Here, the intermediate radius $\bar{\rho} \in [\nicefrac{\rho}{2},\nicefrac{3\rho}{4}]$ is as in Lemma~\ref{t.gluing}. For $\mathbf{S}_1$, we have, using Lemma~\ref{t.useful lemma}, the Poincar\'{e} inequality and~\eqref{intrinsic 1}, 
\begin{align*}
\mathbf{S}_1 &\leq C\rho^{\theta-sp}\dashint_{I_\rho^\lambda}\dashint_{B_{\rho}}\big|u(y,t)-(u(t))_{B_{\bar{\rho}}}\big|^{p}\d{y}\d{t}\\
&\leq C\rho^{\theta-sp}\dashint_{I_\rho^\lambda}\dashint_{B_{\rho}}\big|u(y,t)-(u(t))_{B_{\rho}}\big|^{p}\d{y}\d{t}\\
&\leq C\rho^{\theta+(1-s)p}\dashint_{Q_{\rho}^\lambda}|\nabla u|^p\d{y}\d{t}\stackrel{\eqref{intrinsic 1}}{\leq}C\rho^{\theta p}\lambda^p
\end{align*}
for a constant $C(\data, K)<\infty$, where to obtain the last line we used $\theta+(1-s)p=\theta p$.

We turn to estimate $\mathbf{S}_2$. Appealing to the H\"{o}lder inequality yields that
\[
\big|u(y,t)-(u(t))_{B_{\bar{\rho}}}\big|^{p} \leq \dashint_{B_{\bar{\rho}}}\big|u(y,t)-u(x,t)\big|^p\d{x}
\]
and for every $y \in \R^d \setminus B_{\rho}$ and $x \in B_{\bar{\rho}}$, we have
$|y-x| <2|y|$, and combining these estimations with Fubini's theorem gives that
\begin{align*}
\mathbf{S}_2 &\leq \rho^\theta\dashint_{I_{\rho}^\lambda}\int_{\R^d \setminus B_{\rho}} \dashint_{B_{\bar{\rho}}}\frac{\big|u(y,t)-u(x,t)\big|^{p}}{|y|^{d+sp}} \d{x}\d{y}\d{t} \\
&\leq 2^{d+sp}\rho^\theta\dashint_{I_{\rho}^\lambda} \dashint_{B_{\bar{\rho}}}\left(\int_{\R^d \setminus B_{\rho}} \frac{\big|u(y,t)-u(x,t)\big|^{p}}{|y-x|^{d+sp}}\d{y}\right) \d{x}\d{t}\\
&\leq C\rho^\theta \dashint_{Q_\rho^\lambda}\cE[u](x,t)\d{x}\d{t},
\end{align*}
where $\displaystyle \cE[u](x,t):=\int_{\R^d}\frac{\big|u(y,t)-u(x,t)\big|^{p}}{|y-x|^{d+sp}}\d{y}$. Using~\eqref{formula}, we have
\begin{align*}
\mathbf{S}_3 &\leq C\rho^{\theta-sp}\dashint_{I_\rho^\lambda}\big|(u(t))_{B_{\bar{\rho}}}-(u)_{Q_\rho^\lambda}\big|^p\d{t}\\
&\leq C\rho^{\theta-sp} \left(\dashint_{I_\rho^\lambda} \left|(u(t))_{B_{\bar{\rho}}}-\dashint_{I_\rho^\lambda}(u(\tau))_{B_{\bar{\rho}}}\d{\tau}\right|^p\d{t}+ \left|\dashint_{I_\rho^\lambda}(u(\tau))_{B_{\bar{\rho}}}\d{\tau}-(u)_{Q_\rho^\lambda}\right|^p\right).
\end{align*}
In view of Lemma~\ref{t.gluing}, H\"{o}lder's inequality and~\eqref{intrinsic 2}, we have, for a $C(\data,K)<\infty$, 
\begin{align*}
\dashint_{I_\rho^\lambda} &\left|(u(t))_{B_{\bar{\rho}}}-\dashint_{I_\rho^\lambda}(u(\tau))_{B_{\bar{\rho}}}\d{\tau}\right|^p\d{t} \\
&\leq \dashint_{I_\rho^\lambda}  \dashint_{I_\rho^\lambda} \left|(u(t))_{B_{\bar{\rho}}}-(u(\tau))_{B_{\bar{\rho}}}\right|^p \d{\tau}\d{t}\\
&\leq \dashint_{I_\rho^\lambda}  \dashint_{I_\rho^\lambda} \left[C\lambda^{2-p} \rho \dashint_{Q_\rho^\lambda} \big(|\nabla u|^{p-1}+|F|^{p-1}\big)\d{x}\d{\tilde{t}}\right]^p\d{\tau}\d{t} \\
&\quad \quad +\dashint_{I_\rho^\lambda}  \dashint_{I_\rho^\lambda} \left[C\lambda^{2-p} \rho \left(\rho^\theta \dashint_{Q_\rho^\lambda}\cE[u]\d{x}\d{\tilde{t}}\right)^{\nicefrac{(p-1)}{p}}\right]^{p}\d{\tau}\d{t} \\
&\leq C\lambda^{(2-p)p}\rho^p \left(\dashint_{Q_\rho^\lambda} \big(|\nabla u|^{p}+|F|^{p}\big)\d{x}\d{\tilde{t}}\right)^{p-1}\\
&\quad \quad +C\lambda^{(2-p)p}\rho^p \left(\rho^\theta \dashint_{Q_\rho^\lambda}\cE[u]\d{x}\d{\tilde{t}}\right)^{p-1}\\
&\!\!\!\stackrel{\eqref{intrinsic 2}}{\leq} C\lambda^p\rho^p+C\lambda^{(2-p)p}\rho^p \left(\rho^\theta \dashint_{Q_\rho^\lambda}\cE[u]\d{x}\d{\tilde{t}}\right)^{p-1}\,.
\end{align*}
In the case $1<p<2$, the H\"{o}lder inequality with $\left(\frac{1}{2-p},\,\frac{1}{p-1}\right)$ implies that
\[
C\lambda^{(2-p)p}\rho^p \left(\rho^\theta \dashint_{Q_\rho^\lambda}\cE[u]\d{x}\d{\tilde{t}}\right)^{p-1} \leq C \lambda^p \rho^{p}+C\rho^{p} \left(\rho^\theta \dashint_{Q_\rho^\lambda}\cE[u]\d{x}\d{\tilde{t}}\right)\,.
\]
Thus, defining
\[
\bm{\mathfrak{S}}_{p, \lambda}(\rho):=
\left\{
\begin{aligned}
&\frac{1}{\lambda^{p(p-2)}}\left(\rho^\theta \dashint_{Q_\rho^\lambda}\cE[u]\d{x}\d{t}\right)^{p-2}&\mbox{if} \quad & p \geq 2\,, \\
&1  &\mbox{if} \quad & \tfrac{2d}{d+2}<p<2\,,
\end{aligned}
\right.
\]
we can obtain unifiedly that
\[
\dashint_{I_\rho^\lambda} \left|(u(t))_{B_{\bar{\rho}}}-\dashint_{I_\rho^\lambda}(u(\tau))_{B_{\bar{\rho}}}\d{\tau}\right|^p\d{t} \leq C\lambda^p \rho^p+\bm{\mathfrak{S}}_{p, \lambda}(\rho) \rho^p\left(\rho^\theta \dashint_{Q_\rho^\lambda}\cE[u]\d{x}\d{t}\right)\,.
\]
Next, observe that
\[
\dashint_{I_\rho^\lambda}(u(\tau))_{B_{\bar{\rho}}}\d{\tau}-(u)_{Q_\rho^\lambda} = \dashint_{I_\rho^\lambda} \dashint_{B_{\bar{\rho}}} \big(u(y,\tau)-(u(\tau))_{B_\rho}\big)\d{y}\d{\tau},
\]
and thus Lemma~\ref{t.useful lemma},~\eqref{intrinsic 2} and the H\"{o}lder and Poincar\'{e} inequalities imply that
\begin{align*}
\left|\dashint_{I_\rho^\lambda}(u(\tau))_{B_{\bar{\rho}}}\d{\tau}-(u)_{Q_\rho^\lambda}\right|^p &\leq \dashint_{I_\rho^\lambda} \dashint_{B_{\bar{\rho}}}\big|u(y,\tau)-(u(\tau))_{B_\rho}\big|^p\d{y}\d{\tau} \\
&\leq C(d) \dashint_{I_\rho^\lambda}\dashint_{B_{\rho}}\rho^p|\nabla u|^p\d{y}\d{\tau}\\
&\!\!\!\stackrel{\eqref{intrinsic 2}}{\leq} C\rho^p\lambda^p
\end{align*}
for a constant $C(d,K)<\infty$. Hence, combining these yields that
\[
\mathbf{S}_3 \leq C\rho^{\theta-sp} \left[\rho^p\lambda^p+\bm{\mathfrak{S}}_{p, \lambda}(\rho) \rho^p\left(\rho^\theta \dashint_{Q_\rho^\lambda}\cE[u]\d{x}\d{t}\right)\right]\,. 
\]
Combining the previous estimates of $\mathbf{S}_1$-$\mathbf{S}_3$ yields~\eqref{e.p-tail-controlled inequality}.
\end{proof}

\section{Proof of Theorem~\ref{t.gradhigher}}\label{Sect.5}
In this final section, we provide the proof of Theorem~\ref{t.gradhigher}. Since the proof is technically involved, we split it into four subsections below, with the main arguments are in~Sections~\ref{Sect.5.2} and~\ref{Sect.5.3}. The proof begins with a ``stopping time argument'', as in, for instance~\cite{KiLe00, AM07}. Unlike in the purely local setting, however, the nonlocal quantity that naturally arises in the previous estimates cannot be controlled by the stopping time argument alone. To overcome this difficulty, we employ a modification of the intrinsic Calder\'{o}n-Zygmund-type decomposition in~\cite[Section 9]{Min07} and also~\cite{CP98, AD16}.

\subsection{Setup and preliminary observations}\label{Sect.5.1}
Before going on, let us make a few preliminary observations.

Denote $\theta:=\frac{(1-s)p}{p-1}$ for short. Fix $\lambda_0 \geq 1$ to be specified later. Let $R \in (0,1]$ satisfy $R \leq R_0$ where $R_0$ is to be small and specified later again. For $\lambda \geq \lambda_0$ and $R/2<r_1\leq r_2 \leq R$, we define
\[
R_\lambda:=\frac{r_2-r_1}{10^{10d}}(1 \wedge \lambda^{\frac{p-2}{2}})\,.
\]
We then have the following simple geometry:

\begin{lemma}\label{t.inclusion cylinder}
For every $\widetilde{z}\in Q_{r_1} \equiv B_{r_1}(x_0) \times (t_0-r_1^2, t_0+r_1^2)$ and $ \rho \in (0,50R_\lambda]$ with $\lambda \geq \lambda_0$, we have
\begin{equation}\label{e.inclusion cylinder}
Q_\rho^\lambda(\widetilde{z}) \subset Q_{r_2}\,.
\end{equation}
\end{lemma}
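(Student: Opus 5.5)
The inclusion \eqref{e.inclusion cylinder} is a direct check of the spatial and the temporal components separately, using only the definition of $R_\lambda$, the bound $\rho\le 50R_\lambda$, and the standing assumptions $\lambda\ge\lambda_0\ge1$ and $R/2<r_1\le r_2\le R\le 1$. Write $\widetilde z=(\widetilde x,\widetilde t)$ with $|\widetilde x-x_0|<r_1$ and $|\widetilde t-t_0|<r_1^2$. Since $1\wedge\lambda^{\frac{p-2}{2}}\le 1$, we have
\[
\rho\le 50R_\lambda\le \frac{50}{10^{10d}}(r_2-r_1).
\]

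\emph{Spatial inclusion.} For $x\in B_\rho(\widetilde x)$ the triangle inequality gives
\[
|x-x_0|<r_1+\rho\le r_1+\tfrac{50}{10^{10d}}(r_2-r_1)<r_2.
\]
Hence $B_\rho(\widetilde x)\subset B_{r_2}(x_0)$.

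\emph{Temporal inclusion.} We need $|\widetilde t-t_0|+\lambda^{2-p}\rho^2<r_2^2$. Since $|\widetilde t-t_0|<r_1^2$, it suffices to show $\lambda^{2-p}\rho^2\le r_2^2-r_1^2$. Note that $r_2^2-r_1^2=(r_2-r_1)(r_2+r_1)\ge (r_2-r_1)^2$. The factor $(1\wedge\lambda^{\frac{p-2}{2}})$ in $R_\lambda$ is designed to absorb $\lambda^{2-p}$, and we split into two cases.
\begin{itemize}
\item If $p\ge 2$, then $\lambda^{2-p}\le 1$ because $\lambda\ge1$, so $\lambda^{2-p}\rho^2\le\rho^2\le 50^2\,10^{-20d}(r_2-r_1)^2$.
\item If $p<2$, then $1\wedge\lambda^{\frac{p-2}{2}}=\lambda^{\frac{p-2}{2}}$, so $\rho^2\le 50^2\,10^{-20d}\lambda^{p-2}(r_2-r_1)^2$ and therefore $\lambda^{2-p}\rho^2\le 50^2\,10^{-20d}(r_2-r_1)^2$.
\end{itemize}
In both cases $\lambda^{2-p}\rho^2<(r_2-r_1)^2\le r_2^2-r_1^2$. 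This gives $(\widetilde t-\lambda^{2-p}\rho^2,\widetilde t+\lambda^{2-p}\rho^2)\subset(t_0-r_2^2,t_0+r_2^2)$, and combined with the spatial inclusion, \eqref{e.inclusion cylinder} follows.

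There is no real obstacle here; the only point requiring care is the case split on $p$ in the time direction. This is the reason the factor $1\wedge\lambda^{\frac{p-2}{2}}$ appears in the definition of $R_\lambda$.
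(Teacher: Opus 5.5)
Your proof is correct and follows the paper's argument. The paper also uses the triangle inequality in space and in time, with the factor $1\wedge\lambda^{\frac{p-2}{2}}$ in $R_\lambda$ absorbing $\lambda^{2-p}$; the only cosmetic difference is that it avoids your case split on $p$ via the single identity $\lambda^{2-p}(1\wedge\lambda^{p-2})=1\wedge\lambda^{2-p}\le 1$, and it compares against $r_2^2-r_1^2=(r_2-r_1)(r_2+r_1)$ instead of $(r_2-r_1)^2$.
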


\begin{proof}[Proof of Lemma~\ref{t.inclusion cylinder}]
Take $z=(x,t) \in Q_\rho^\lambda(\widetilde{z}) $ arbitrarily. For the spatial direction, it is straightforward to check that
\[
|x-x_0| \leq |x-\tilde{x}|+|\tilde{x}-x_0| \leq r_1+\rho \leq r_1+50R_\lambda <r_2\,.
\]
On the other hand,
\[
|t-t_0| \leq |t-\tilde{t}|+|\tilde{t}-t_0| \leq 2\lambda^{2-p}\rho^2+r_1^2<5000\lambda^{2-p}R_\lambda^2+r_1^2\,.
\]
It suffices to check this in the case that
\[
5000\lambda^{2-p}R_\lambda^2+r_1^2<r_2^2\,.
\]
Indeed, since $\lambda^{2-p}(1\wedge \lambda^{p-2}) \leq 1$ for every $p >\frac{2d}{d+2}$ and $\lambda \geq \lambda_0$, we have
\begin{align*}
r_2^2-\left(5000\lambda^{2-p}R_\lambda^2+r_1^2\right)&=r_2^2-r_1^2-\frac{5000(r_2-r_1)^2}{10^{20d}}\lambda^{2-p}(1\wedge \lambda^{p-2})\\
&\geq (r_2-r_1)\left(r_2+r_1-\frac{5000}{10^{20d}}(r_2-r_1)\right)>0\,.
\end{align*}
Combining these observations,~\eqref{e.inclusion cylinder} immediately follows.
\end{proof}

Next, define
\[
\displaystyle \mathbf{E}_r:=\int_{Q_r}\big(|\nabla u|^p+|F|^p\big)\d{x}\d{t}\,. 
\]
In view of Lemma~\ref{t.inclusion cylinder}, it is straightforward to check, for every $\rho \in [R_\lambda/50,\,R_\lambda]$, that
\begin{equation*}
\dashint_{Q_\rho^\lambda(\tilde{z})} \bigl(|\nabla u|^p+|F|^p\bigr)\d{x}\d{t} \leq \frac{\lambda^p}{\omega_d\left(\frac{r_2-r_1}{50\cdot 10^{10d}}\right)^{d+2}\lambda^{\gamma_p}} \mathbf{E}_{r_2}\,.
\end{equation*}
Denote $H(d):=2(50\cdot 10^{10d})^{d+2} / \omega_d$ for short. Take $\eta \in (0,1)$ to satisfy $\eta \leq \eta_0$, where $\eta_0 \in (0,1)$ to be specified later in~\eqref{e.pfgradhigher13}. We now select $\lambda_0$ to satisfy
\begin{align}\label{e.pfgradhigher1}
\lambda_0 =\eta^{-\nicefrac{1}{p}}&+\left[\frac{H(d)}{(r_2-r_1)^{d+2}}\mathbf{E}_{r_2}\right]^{\nicefrac{1}{\gamma_p}} \notag\\
&+\indc_{\{\theta \geq d+2\}}\left[\frac{(r_2-r_1)^{\theta-d-2}}{\eta}\int_{Q_{r_2}}\cE[u]\d{x}\d{t}\right]^{{\frac{2 \indc_{\{p<2\}}}{4+(2-p)_+(\theta-d-2)}}}\,,
\end{align}
obtaining
\[
\omega_d\left(\frac{r_2-r_1}{50\cdot 10^{10d}}\right)^{d+2}\lambda_0^{\gamma_p} \geq 2\mathbf{E}_{r_2}\,,
\]
where $(2-p)_+:=(2-p) \vee 0$. All in all, for every $\lambda \geq \lambda_0$, $\rho \in [R_\lambda/50,\,R_\lambda]$ and $\tilde{z} \in Q_{r_1}$, we deduce that
\begin{equation}\label{e.pfgradhigher2}
\dashint_{Q^\lambda_\rho(\tilde{z})}\bigl(|\nabla u|^p+|F|^p\bigr)\d{x}\d{t}  \leq \frac{1}{2}\lambda_0^p \leq \frac{1}{2}\lambda^p\,.
\end{equation}
\subsection{Stopping time argument}\label{Sect.5.2}
For $\lambda \geq \lambda_0$ and $r \in (0,R)$, let $\mathbf{\Xi}(r, \lambda)$ be the set of Lebesgue points and super level set of $|\nabla u|$, that is,
%
\[
\mathbf{\Xi}(r, \lambda):=\Biggl\{ \tilde{z} \in Q_r :  \lim_{\rho \searrow  0}\left\|\nabla u- \nabla u (\widetilde{z})\right\|_{\underline{L}^1(Q_\rho^\lambda(\widetilde{z}))}=0 \quad \mbox{and} \quad |\nabla u|(\tilde{z}) >\lambda \Biggr\}\,.
\]
We then find that $\bigl|Q_r \cap \{|\nabla u|>\lambda \} \setminus \mathbf{\Xi}(r, \lambda)\bigr|=0$. For each $\lambda>\lambda_0$ and $\tilde{z} \in \mathbf{\Xi}(r_1, \lambda)$, there exists $\rho_{\tilde{z}}^\ast  \in (0,\,R_\lambda/50]$ such that
\begin{equation}\label{e.pfgradhigher2'}
\left\{
\begin{aligned}
& 
\dashint_{Q^\lambda_{\rho_{\tilde{z}}^\ast}(\tilde{z})}\bigl(|\nabla u|^p+|F|^p\bigr)\d{x}\d{t}=\lambda^p\,,
\quad \mbox{while} 
\\ & 
\dashint_{Q^\lambda_{\rho}(\tilde{z})}\bigl(|\nabla u|^p+|F|^p\bigr)\d{x}\d{t} < \lambda^p\,,\quad \forall \rho \in (\rho_{\tilde{z}}^\ast,\,R_\lambda]\,.
\end{aligned}
\right.
\end{equation}
Indeed, the existence of such $\rho_{\tilde{z}}^\ast$ is clear since $\tilde{z} \in \mathbf{\Xi}(r_1, \lambda)$, which gives, by H\"{o}lder's inequality,
%
%
\begin{align*}
\liminf_{\rho \searrow 0}\dashint_{Q^\lambda_\rho(\tilde{z})}\bigl(|\nabla u|^p+|F|^p\bigr)\d{x}\d{t} &\geq  \liminf_{\rho \searrow 0}\left(\dashint_{Q^\lambda_\rho(\tilde{z})}|\nabla u|\d{x}\d{t} \right)^p\\
&=|\nabla u (\tilde{z})|^p >\lambda^p\,.
\end{align*}
The mapping $\rho \mapsto  \dashint_{Q^\lambda_\rho(\tilde{z})}\bigl(|\nabla u|^p+|F|^p\bigr)\d{x}\d{t}$ is continuous, and by~\eqref{e.pfgradhigher2}
%
\[
\dashint_{Q^\lambda_{R_\lambda/50}(\tilde{z})}\bigl(|\nabla u|^p+|F|^p\bigr)\d{x}\d{t} < \lambda^p\,,
\]
giving~\eqref{e.pfgradhigher2'}. Hence, by~\eqref{e.pfgradhigher2'}, we obtain in particular the ``stopping property''
\begin{equation}\label{e.pfgradhigher3}
\left\{
\begin{aligned}
& 
\dashint_{Q^\lambda_{\rho_{\tilde{z}}^\ast}(\tilde{z})}\bigl(|\nabla u|^p+|F|^p\bigr)\d{x}\d{t}=\lambda^p
\,, \quad \mbox{and} 
\\ & 
\dashint_{2aQ^\lambda_{\rho_{\tilde{z}}^\ast}(\tilde{z})}\bigl(|\nabla u|^p+|F|^p\bigr)\d{x}\d{t} < \lambda^p\,, \quad a \in \{1,\ldots, 5\}\,.
\end{aligned}
\right.
\end{equation}
For the radius $2a\rho_{\tilde{z}}^\ast \leq 10\rho_{\tilde{z}}^\ast <R_\lambda/2$ and, by Lemma~\ref{t.inclusion cylinder}, $10Q^\lambda_{\rho_{\tilde{z}}^\ast}(\tilde{z}) \subset Q_{r_2}$ follows. In particular, we have that $2aQ^\lambda_{\rho_{\tilde{z}}^\ast}(\tilde{z}) \subset Q_{r_2}$ for $a \in \{1,\ldots, 5\}$. The Vitali covering theorem~\cite[Theorem 1.5.1]{EG92} yields a countable collection of pairwise disjoint cylinders $\cF:=\left\{Q_i^\lambda:=Q^\lambda_{2\rho_{z_i}^\ast}(z_i)\right\}_{i \in \N}$  in $\left\{Q^\lambda_{2\rho_{z}^\ast}(\widetilde{z})\right\}_{\widetilde{z} \in \mathbf{\Xi}(r_1, \lambda)}$ such that
\[
\mathbf{\Xi}(r_1, \lambda) \subset \bigcup_{i \in \N} 5Q_i^\lambda\,.
\]
Hence, using the stopping property~\eqref{e.pfgradhigher3}, 
\[
\int_{5Q_i^\lambda}|\nabla u|^p\d{x}\d{t} \leq \lambda^p|5Q_i^\lambda|=5^{d+2}\lambda^p|Q_i^\lambda|\,,
\]
and so we obtain
\begin{equation}\label{e.pfgradhigher4}
\int_{\mathbf{\Xi}(r_1, \lambda)}|\nabla u|^p\d{x}\d{t} \leq 5^{d+2}\lambda^p\sum_{i=1}^\infty|Q_i^\lambda|\,.
\end{equation}
Moreover, by this stopping property, every selected cylinder $Q_i^\lambda=Q_{2\rho_{z_i}^\ast}^\lambda(z_i)$ satisfies the intrinsic geometry~\eqref{intrinsic 1} with $K=1$. Indeed, by~\eqref{e.pfgradhigher3} we find that
\[
\dashint_{Q_i^\lambda}\bigl(|\nabla u|^p+|F|^p \bigr)\d{x}\d{t} <\lambda^p=\dashint_{\frac{1}{2}Q_i^\lambda}\bigl(|\nabla u|^p+|F|^p \bigr)\d{x}\d{t} \,,
\]
which is exactly~\eqref{intrinsic 1} with $K=1$.

\subsection{Estimate on super-level sets}\label{Sect.5.3}
Denote, for short, 
\[
\mathbf{T}^p_\rho:=\rho^{\theta}\dashint_{I_{\rho}^\lambda}\int_{\R^d \setminus B_{\rho/2}}\frac{\big|u(x,t)-(u)_{Q_{\rho}^\lambda}\big|^{p}}{|y|^{d+sp}}\d{y}\d{t}\,.
\]
Define a ``bad Tail family''
\[
\cB_{\tail}:=\left\{Q_i^\lambda=Q^\lambda_{2\rho_{z_i}^\ast}(z_i) : (2\rho_{z_i}^\ast)^\theta \dashint_{Q_i} \cE[u](x,t)\d{x}\d{t} >\eta \lambda^p \right\}\,,
\]
and a ``bad inhomogeneity family''
\[
\cB_{\mathrm{inhom}}:=\left\{Q_i^\lambda=Q^\lambda_{2\rho_{z_i}^\ast}(z_i) :  \dashint_{Q_i} \bigl(|F|^p+1\bigr)\d{x}\d{t} >\eta \lambda^p \right\}\,.
\]
In the remaining set, we define a ``good family'' as
\[
\cG:=\cF \setminus \left( \cB_{\tail} \cup \cB_{\mathrm{inhom}} \right)\,.
\]
We also define indices in the following: for $j \in \N$, 
\[
\left\{
\begin{aligned}
& j \in \mathrm{Ind} (\cB_{\tail}) &\iff & \quad Q_j^\lambda  \in \cB_{\tail}\,, \\
& j \in \mathrm{Ind} (\cB_{\mathrm{inhom}}) &\iff & \quad Q_j^\lambda \in \cB_{\mathrm{inhom}}\,, \quad \mbox{and} 
\\ 
&  j \in \mathrm{Ind} (\cG) &\iff &\quad Q_j^\lambda \in \cG\,.
\end{aligned}
\right.
\]
\emph{Step 1: Super-level set estimate for $Q_i^\lambda \in \cG$}. By construction, for $Q_i^\lambda \in \cG$ we have
\[
\dashint_{\frac{1}{2}Q_i^\lambda}\bigl(|\nabla u|^p+|F|^p\bigr)\d{x}\d{t}=\lambda^p\,.
\]
On the other hand, 
\[
\dashint_{\frac{1}{2}Q_i^\lambda}|F|^p\d{x}\d{t} \leq 2^{d+2} \dashint_{Q_i^\lambda}|F|^p\d{x}\d{t} \leq 2^{d+2}\eta \lambda^p\,.
\]
We temporarily assume that $\eta_0 <\nicefrac{1}{2^{d+4}}$ to obtain that
\begin{equation}\label{e.pfgradhigher5}
\dashint_{\frac{1}{2}Q_i^\lambda}|\nabla u|^p\d{x}\d{t} = \lambda^p-\dashint_{\frac{1}{2}Q_i^\lambda}|F|^p\d{x}\d{t} >\lambda^p-\frac{1}{4}\lambda^p=\frac{3}{4}\lambda^p\,.
\end{equation}
Now, select $R_0$ such that $C_{\mathsf{Tail}} R_0^{\theta p} \leq \eta$. Appealing to Lemma~\ref{t.p-tail-controlled inequality} with $Q_i^\lambda \in \cG$, we get, for a constant $C_{\mathsf{Tail}}(\data)<\infty$,
\begin{equation*}
\mathbf{T}_{2\rho_{z_i}^\ast}^p \leq C_{\TAIL} (2\rho_{z_i}^\ast)^{\theta p}\lambda^p+C_{\TAIL}\left(1+\bm{\mathfrak{S}}_{p, \lambda}(2\rho_{z_i}^\ast)\right)\underbrace{\left((2\rho_{z_i}^\ast)^{\theta}\dashint_{Q_i^\lambda}\cE[u]\d{x}\d{t}\right)}_{\leq \eta \lambda^p}\,,
\end{equation*}
We then break into two cases $\frac{2d}{d+2}<p<2$ and $p \geq 2$. In the former case $\frac{2d}{d+2}<p<2$, by definition, $\bm{\mathfrak{S}}_{p, \lambda}(2\rho_{z_i}^\ast)=1$. In the latter case, since $Q_i^\lambda \in \cG$, we crudely estimate that
\begin{equation*}
\bm{\mathfrak{S}}_{p, \lambda}(2\rho_{z_i}^\ast) \leq \eta^{p-2} \leq 1\,.
\end{equation*}
Thus, using this and
\[
1 \leq \eta \lambda_0^p \leq  \eta \lambda^p \quad \mbox{and} \quad 
\rho_{z_i}^\ast \leq \frac{R_\lambda}{50} \leq \frac{R}{10^{10d}} <R_0\,,
\]
we get
\begin{equation}\label{e.pfgradhigher6}
\mathbf{T}_{2\rho_{z_i}^\ast}^p \leq \bigl(2^{\frac{p^2}{p-1}}+1\bigr)C_{\TAIL}\eta \lambda^p\,.
\end{equation}
Using the previous two displays~\eqref{e.pfgradhigher5} and~\eqref{e.pfgradhigher6}, the reverse H\"{o}lder inequality (Lemma~\ref{t.Reverse Holder}) implies that
\begin{align*}
\frac{3}{4}\lambda^p &\leq \dashint_{\frac{1}{2}Q_i^\lambda}|\nabla u|^p\d{x}\d{t} \\
&\leq C_{\RH}\left[\left(\dashint_{Q_i^\lambda}|\nabla u|^{\kappa p}\d{x}\d{t}\right)^{\nicefrac{1}{\kappa}}+\dashint_{Q_i^\lambda}\bigl(|F|^p+1\bigr)\d{x}\d{t} \right]+C_{\RH}\mathbf{T}_{2\rho_{z_i}^\ast}^p \\
&\quad \quad \quad +C_{\RH}\bm{\mathfrak{U}}_{p, \lambda}(2\rho_{z_i}^\ast) \underbrace{\left((2\rho_{z_i}^\ast)^\theta \dashint_{Q_i^\lambda}\cE[u]\d{x}\d{t}\right)}_{\leq \eta \lambda^p}\\
&\!\!\!\!\!\!\!\!\stackrel{\eqref{e.pfgradhigher5}, \eqref{e.pfgradhigher6}}{\leq} C_{\RH}\left(\dashint_{Q_i^\lambda}|\nabla u|^{\kappa p}\d{x}\d{t}\right)^{\nicefrac{1}{\kappa}} + C_{\RH}\bigl(1+4^{\frac{p^2}{p-1}}C_{\TAIL} \bigr)\eta \lambda^p + C_{\RH}\bm{\mathfrak{U}}_{p, \lambda}(2\rho_{z_i}^\ast) \eta \lambda^p\,.
\end{align*}
Again, we break into two cases $\frac{2d}{d+2}<p<2$ and $p \geq 2$. In the former case $\frac{2d}{d+2}<p<2$, by definition, $\bm{\mathfrak{U}}_{p, \lambda}(2\rho_{z_i}^\ast)=1$. In the latter case, since $Q_i^\lambda \in \cG$, we crudely estimate that
\begin{align*}
\bm{\mathfrak{U}}_{p, \lambda}(2\rho_{z_i}^\ast)&=\frac{1}{\lambda^{p \sigma_0}}\left((2\rho_{z_i}^\ast)^\theta \dashint_{Q_{2\rho_{z_i}^\ast}^\lambda}\cE[u]\d{x}\d{t}\right)^{\sigma_0}  + \frac{1}{\lambda^{p(p-2)}}\left((2\rho_{z_i}^\ast)^\theta \dashint_{Q_{2\rho_{z_i}^\ast}^\lambda}\cE[u]\d{x}\d{t}\right)^{p-2}\\
&\leq \eta^{\sigma_0}+\eta^{p-2} \leq 2\,.
\end{align*}
Thus, we have that
\[
\frac{3}{4}\lambda^p \leq C_{\RH}\left(\dashint_{Q_i^\lambda}|\nabla u|^{\kappa p}\d{x}\d{t}\right)^{\nicefrac{1}{\kappa}} + C_{\RH}\bigl(3+4^{\frac{p^2}{p-1}}C_{\TAIL} \bigr)\eta \lambda^p
\]
Assume that $\eta_0 \in (0,1)$ satisfies
\[
C_{\RH}\bigl(3+4^{\frac{p^2}{p-1}}C_{\TAIL} \bigr)\eta_0 \leq \frac{1}{4}\,,
\]
Therefore, rearranging gives that
\begin{equation}\label{e.pfgradhigher7}
\frac{1}{2}\lambda^p \leq C_{\mathsf{RH}}\left(\dashint_{Q_i^\lambda}|\nabla u|^{\kappa p}\d{x}\d{t}\right)^{\nicefrac{1}{\kappa}}\,.
\end{equation}
Thus, for sufficiently small $C_\star>0$ to be determined later we bound
\begin{align*}
\dashint_{Q_i^\lambda}|\nabla u|^{\kappa p}\d{x}\d{t}&=\frac{1}{|Q_i^\lambda|}\left(\int_{Q_i^\lambda \cap \{|\nabla u|<C_\star \lambda\}} |\nabla u|^{\kappa p}\d{x}\d{t}+\int_{Q_i^\lambda \cap \{|\nabla u| \geq C_\star \lambda\}} |\nabla u|^{\kappa p}\d{x}\d{t}\right)\\[2mm]
&<(C_\star \lambda)^{\kappa p}+\frac{1}{|Q_i^\lambda|}\int_{Q_i^\lambda \cap \{|\nabla u| \geq C_\star \lambda\}} |\nabla u|^{\kappa p}\d{x}\d{t} \\[2mm]
&\!\!\stackrel{\eqref{e.pfgradhigher7}}{\leq} (2C_{\mathsf{RH}})^\kappa \left(\dashint_{Q_i^\lambda}|\nabla u|^{\kappa p}\d{x}\d{t}\right)C_\star^{\kappa p}+\frac{1}{|Q_i^\lambda|}\int_{Q_i^\lambda \cap \{|\nabla u| \geq C_\star \lambda\}} |\nabla u|^{\kappa p}\d{x}\d{t}\,.
\end{align*}
Select $C_\star>0$ so that
\[
(2C_{\mathsf{RH}})^\kappa C_\star^{\kappa p}=\frac{1}{2} \quad \iff \quad C_\star=\left(\frac{1}{2^{\kappa+1}C_{\mathsf{RH}}^\kappa} \right)^{\nicefrac{1}{(\kappa p)}}\,.
\]
This leads to
\[
\dashint_{Q_i^\lambda}|\nabla u|^{\kappa p}\d{x}\d{t} \leq \frac{2}{|Q_i^\lambda|}\int_{Q_i^\lambda \cap \{|\nabla u| \geq C_\star \lambda\}} |\nabla u|^{\kappa p}\d{x}\d{t}\,,
\]
and thus combining this with~\eqref{e.pfgradhigher7} yields that, for $Q_i^\lambda \in \cG$, 
\begin{equation}\label{e.pfgradhigher8}
|Q_i^\lambda| \leq \frac{2^{\kappa +1}C_{\mathsf{RH}}^\kappa}{\lambda^{\kappa p} }\int_{Q_i^\lambda \cap \{|\nabla u| \geq C_\star \lambda\}} |\nabla u|^{\kappa p}\d{x}\d{t} \quad \mbox{with} \quad C_\star=\left(\frac{1}{2^{\kappa+1}C_{\mathsf{RH}}^\kappa} \right)^{\nicefrac{1}{(\kappa p)}}\,.
\end{equation}
Using this and the disjointness, we deduce that
\begin{align}\label{e.pfgradhigherA}
\lambda^p \sum_{i \in \mathrm{Ind}(\cG)} |Q_i^\lambda| &\leq 2^{\kappa +1}C_{\mathsf{RH}}^\kappa\lambda^{p-\kappa p} \sum_{i \in \mathrm{Ind}(\cG)} \int_{Q_i^\lambda \cap \{|\nabla u| \geq C_\star \lambda\}} |\nabla u|^{\kappa p}\d{x}\d{t} \notag\\
&\leq 2^{\kappa +1}C_{\mathsf{RH}}^\kappa\lambda^{p-\kappa p}\int_{Q_{r_2} \cap \{|\nabla u| \geq C_\star \lambda\}} |\nabla u|^{\kappa p}\d{x}\d{t}\,.
\end{align}
\smallskip

\emph{Step 2: Super-level set estimate for $Q_i^\lambda \in \cB_{\mathrm{inhom}}$}.  For this, we deploy another tool. For a fixed $\lambda$ above, let us introduce a \emph{maximal operator} $\cM_{R_\lambda}[\varphi](z)$ defined for each $\varphi \in L^1_{\mathrm{loc}}(\R^{d+1})$ by
\[
\cM_{R_\lambda}[\varphi](z):=\sup_{0<r\leq R_\lambda}\dashint_{Q_r^\lambda(z)}|\varphi(y,\tau)|\d{y}\d{\tau}\,.
\]
Furthermore, we define the fractional maximal operator $\cM_{\theta, R_\lambda}$ by
\[
\cM_{\theta, R_\lambda}[\varphi](z):=\sup_{0<r\leq R_\lambda}r^\theta \dashint_{Q_r^\lambda(z)} |\varphi(y,\tau)|\d{y}\d{\tau}\,.
\]
%
As explained in~\cite[Theorem 1.1]{Ste70} or~\cite[Theorem 1.15]{KLV}, for every $\varphi \in L^1(\R^{d+1})$, we have the \emph{weak-type $L^1$ estimate}
\begin{equation}\label{e.pfgradhigher9}
\left| \bigl\{\cM_{R_\lambda}[\varphi]>a \bigr\}\right| \leq \frac{C(d)}{a}\|\varphi\|_{L^1(\R^{d+1})}\quad \mbox{for}\,\,a>0\,.
\end{equation}
For $Q_i^\lambda \in \cB_{\mathrm{inhom}}$, appealing to H\"{o}lder's inequality yields
\begin{equation}\label{e.pfgradhigher10}
(\eta \lambda^p)^{1+\sigma}<\left(\dashint_{Q_i^\lambda} \bigl(|F|^p+1 \bigr)\d{x}\d{t} \right)^{1+\sigma}\leq \dashint_{Q_i^\lambda} \bigl(|F|^p+1 \bigr)^{1+\sigma}\d{x}\d{t}\,.
\end{equation}
Since $Q_i^\lambda \subset Q^\lambda_{4\rho_{z_i}^\ast}(z)$ for any $z \in Q_i^\lambda$, $4\rho_{z_i}^\ast <R_\lambda$ and also $Q_i^\lambda \subset Q_{r_2}$, in view of Lemma~\ref{t.inclusion cylinder}.  Using this and~\eqref{e.pfgradhigher10}, we bound, for every $i \in\mathrm{Ind}(\cB_{\mathrm{inhom}})$,
\begin{align*}
\cM_{R_\lambda}\left[\bigl(|F|^p+1\bigr)^{1+\sigma }\indc_{Q_{r_2}}\right](z) &\geq \dashint_{Q_{4\rho_{z_i}^\ast}^\lambda(z)} \bigl(|F|^p+1\bigr)^{1+\sigma}\indc_{Q_{r_2}}\d{y}\d{\tau} \\
&\geq 2^{-d-2}\dashint_{Q_i^\lambda}\bigl(|F|^p+1\bigr)^{1+\sigma}\d{y}\d{\tau} \\
&\!\!\!\stackrel{\eqref{e.pfgradhigher10}}{\geq} 2^{-d-2}(\eta \lambda^p)^{1+\sigma}\,,
\end{align*}
and hence
\[
\bigcup_{i \in\mathrm{Ind}(\cB_{\mathrm{inhom}})} Q_i^\lambda \subset \left\{ \cM_{R_\lambda}\left[\bigl(|F|^p+1\bigr)^{1+\sigma}\indc_{Q_{r_2}}\right] \geq 2^{-d-2}(\eta \lambda^p)^{1+\sigma}\right\}\,.
\]
Using the disjointness, we have
\begin{align*}
\sum_{i \in \mathrm{Ind}(\cB_{\mathrm{inhom}})} |Q_i^\lambda| &=\left|\bigcup_{i \in\mathrm{Ind}(\cB_{\mathrm{inhom}})} Q_i^\lambda \right| \\[2mm]
&\leq \Bigl|\left\{ \cM_{R_\lambda}\left[\bigl(|F|^p+1\bigr)^{1+\sigma}\indc_{Q_{r_2}}\right] \geq 2^{-d-2}(\eta \lambda^p)^{1+\sigma}\right\}\Bigr|\,.
\end{align*}
Thus, appealing to~\eqref{e.pfgradhigher9} with $\varphi=\bigl(|F|^p+1\bigr)^{1+\sigma} \indc_{Q_{r_2}}$ ensures that
\begin{equation}\label{e.pfgradhigherB}
\lambda^p \sum_{i \in \mathrm{Ind}(\cB_{\mathrm{inhom}})} |Q_i^\lambda| \leq C\eta^{-1-\sigma}\lambda^{-p\sigma}\int_{Q_{r_2}}\bigl(|F|^p+1\bigr)^{1+\sigma}\d{x}\d{t}\,.
\end{equation}
\smallskip

\emph{Step 3: Super-level set estimate for $Q_i^\lambda \in \cB_{\tail}$}. Let $Q_i^\lambda \in \cB_{\tail}$ and denote $\varphi:=\cE[u]\indc_{Q_{r_2}}$. Notice that $Q_i^\lambda  \subset Q^\lambda_{4\rho_{z_i}^\ast}(z)$ for any $z \in Q_i^\lambda$ and $4\rho_{z_i}^\ast <R_\lambda$. Also, since $Q_i^\lambda \subset Q_{r_2}$, $\varphi=\cE[u]$ on $Q_i^\lambda$. We then split the argument into two cases $\theta<d+2$ or $ \theta \geq d+2$. In the former case, using these observations, we have that, for every $i \in \mathrm{Ind}(\cB_{\mathrm{Tail}})$, 
\begin{align*}
\cM_{\theta, R_\lambda}\bigl[\varphi \bigr](z) &\geq (4\rho_{z_i}^\ast)^{\theta}\dashint_{Q^\lambda_{4\rho_{z_i}^\ast}(z)}\varphi \d{x}\d{t}\\
&\geq 2^{\theta-d-2}(2\rho_{z_i}^\ast)^{\theta}\dashint_{Q_i^\lambda}\cE[u]\d{x}\d{t} >2^{\theta-d-2} \eta \lambda^p\,,
\end{align*}
and therefore 
\[
\bigcup_{i \in \mathrm{Ind}(\cB_{\mathrm{Tail}})} Q_i^\lambda \subset \Bigl\{ \cM_{\theta, R_\lambda}\bigl[\cE[u]\indc_{Q_{r_2}}\bigr] > 2^{\theta-d-2} \eta \lambda^p\Bigr\}\,.
\]
Using the disjointness, we deduce that
\begin{equation}\label{e.pfgradhigherCpre}
\sum_{i \in \mathrm{Ind}(\cB_{\mathrm{Tail}})} |Q_i^\lambda|= \left|\bigcup_{i \in \mathrm{Ind}(\cB_{\mathrm{Tail}})} Q_i^\lambda\right| \leq \Bigl|\Bigl\{ \cM_{\theta, R_\lambda}\bigl[\cE[u]\indc_{Q_{r_2}}\bigr] > 2^{\theta-d-2} \eta \lambda^p\Bigr\}\Bigr|\,.
\end{equation}
We will deduce the following fractional weak type estimate.
\begin{lemma}\label{t.weak L1-estimate}
Let $\theta=\frac{(1-s)p}{p-1}$ satisfy $\theta <d+2$ and set $q:=\frac{d+2}{d+2-\theta}>1$. There exists a constant $C(d,\theta)<\infty$ such that for every $a>0$ and $\varphi \in L^1(\R^{d+1})$
\begin{equation*}
\Bigl| \bigl\{\cM_{\theta, R_\lambda}[\varphi]>a \bigr\}\Bigr| \leq C\frac{\lambda^{(p-2)(q-1)}}{a^q}\|\varphi\|_{L^1(\R^{d+1})}^q\,.
\end{equation*}
\end{lemma}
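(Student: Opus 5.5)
The plan is a Vitali covering combined with an interpolation of two elementary bounds on each cylinder. Fix $a>0$ and let $z\in\{\cM_{\theta,R_\lambda}[\varphi]>a\}$. By definition there is a radius $r_z\in(0,R_\lambda]$ such that
\[
r_z^\theta\dashint_{Q_{r_z}^\lambda(z)}|\varphi|\d{y}\d{\tau}>a,
\qquad\text{that is,}\qquad
|Q^\lambda_{r_z}(z)|<\frac{r_z^\theta}{a}\int_{Q_{r_z}^\lambda(z)}|\varphi|\d{y}\d{\tau}.
\]
Since $|Q^\lambda_r|=2\omega_d\lambda^{2-p}r^{d+2}$, the radius satisfies
\[
r_z^{d+2}=\frac{|Q^\lambda_{r_z}(z)|}{2\omega_d\lambda^{2-p}} .
\]
Substituting $r_z^\theta=(r_z^{d+2})^{\theta/(d+2)}$ into the previous inequality and rearranging (using $\theta<d+2$, so $1-\theta/(d+2)=1/q>0$) should give
\[
|Q^\lambda_{r_z}(z)|^{1/q}\le C(d,\theta)\,\lambda^{-(2-p)\theta/(d+2)}\,a^{-1}\int_{Q^\lambda_{r_z}(z)}|\varphi|,
\]
and hence
\[
|Q^\lambda_{r_z}(z)|\le C\,\lambda^{(p-2)\theta q/(d+2)}a^{-q}\Bigl(\int_{Q^\lambda_{r_z}(z)}|\varphi|\Bigr)^q .
\]
Next I check the exponent: $\theta q/(d+2)=\theta/(d+2-\theta)=q-1$, which matches the $\lambda^{(p-2)(q-1)}$ in the statement.

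For the covering step, the radii are uniformly bounded by $R_\lambda$, and all cylinders carry the same $\lambda$ and hence the same intrinsic scaling. The Vitali covering theorem (in the parabolic metric adapted to $\lambda$, as already used in Section~\ref{Sect.5.2}) therefore yields a disjoint subfamily $\{Q^\lambda_{r_j}(z_j)\}$ with
\[
\{\cM_{\theta,R_\lambda}[\varphi]>a\}\subset\bigcup_j 5Q^\lambda_{r_j}(z_j).
\]
If the superlevel set is not open or not bounded, I would work with a compact subset, or restrict to bounded pieces and pass to the limit by inner regularity; measurability follows from lower semicontinuity of the maximal function. Summing gives
\[
|\{\cM_{\theta,R_\lambda}[\varphi]>a\}|\le 5^{d+2}\sum_j|Q^\lambda_{r_j}(z_j)|\le C\lambda^{(p-2)(q-1)}a^{-q}\sum_j\Bigl(\int_{Q_j}|\varphi|\Bigr)^q .
\]
Since $q>1$ and the cylinders are disjoint, superadditivity of $t\mapsto t^q$ gives
\[
\sum_j x_j^q\le\Bigl(\sum_j x_j\Bigr)^q\le\|\varphi\|_{L^1(\R^{d+1})}^q,
\]
which concludes the estimate.

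The main point to handle carefully is the interpolation step: tracking that the $\lambda$-dependence enters only through the time length of the intrinsic cylinder, so that the constant depends only on $(d,\theta)$ and the power of $\lambda$ comes out exactly as $(p-2)(q-1)$. The covering is routine, since the sup over radii is truncated at $R_\lambda$ and all cylinders share the same $\lambda$.
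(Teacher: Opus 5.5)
Your proposal is correct and follows essentially the same route as the paper's proof. Both arguments select for each point of $\{\cM_{\theta,R_\lambda}[\varphi]>a\}$ a radius $r_z\le R_\lambda$ with $r_z^\theta\dashint_{Q^\lambda_{r_z}(z)}|\varphi|>a$, and both apply Vitali's covering theorem to these equally scaled cylinders. Both then convert each stopping inequality into $|Q_j^{\lambda,a}|\le C\lambda^{(p-2)(q-1)}a^{-q}\bigl(\int_{Q_j^{\lambda,a}}|\varphi|\bigr)^q$; the paper writes this via $r^{d+2}=(r^{d+2-\theta})^q$, which is your interpolation step. Both finish with the disjointness and the superadditivity $\sum_j x_j^q\le(\sum_j x_j)^q$.
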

\begin{proof}[Proof of Lemma~\ref{t.weak L1-estimate}]
Denote $E_a:=\bigl\{\cM_{\theta, R_\lambda}[\varphi]>a \bigr\}$ for short. For $z \in E_a$, we select $r_z \leq R_\lambda$ so that
\[
a<r_z^\theta \dashint_{Q_{r_z}^\lambda(z)}|\varphi|\d{x}\d{t}\,.
\]
Applying Vitali's covering theorem~\cite[Theorem 1.5.1]{EG92}, there is a countable, pairwise disjoint subfamily $\left\{Q_j^{\lambda, a}:=Q^\lambda_{r_{z_j}}(z_j)\right\}_{j \in \N} \subset \left\{Q^\lambda_{r_{z}}(z)\right\}_{z \in E_a}$ so that
\begin{equation}\label{e.pfgradhigher11}
E_a\subset \bigcup_{j \in \N} 5Q_j^{\lambda, a}\,.
\end{equation}
Since
\[
a<r_{z_j}^\theta \dashint_{Q_j^{\lambda, a}}|\varphi|\d{x}\d{t} \quad \iff \quad r_{z_j}^{d+2-\theta} \leq C(d)\frac{\lambda^{p-2}}{a}\int_{Q_j^{\lambda, a}}|\varphi|\d{x}\d{t}\,,
\]
we have
\begin{align*}
\bigl|Q_j^{\lambda, a}\bigr|=2\omega_d\lambda^{2-p}(r_{z_j}^{d+2-\theta})^q \leq C(d)\frac{\lambda^{(p-2)(q-1)}}{a^q}\left(\int_{Q_j^{\lambda, a}}|\varphi|\d{x}\d{t}\right)^q\,.
\end{align*}
Using~\eqref{e.pfgradhigher11} and the disjointness of $\{Q_j^{\lambda, a}\}_{j \in \N}$, we find that
\begin{align*}
|E_a| &\leq 5^{d+2}\sum_{j=1}^\infty \bigl|Q_j^{\lambda, a}\bigr| \\
&\!\!\!\!\stackrel{\eqref{e.pfgradhigher11}}{\leq}\frac{C}{a^q}\lambda^{(p-2)(q-1)}\sum_{j=1}^\infty \left(\int_{Q_j^{\lambda, a}}|\varphi|\d{x}\d{t}\right)^q\\
&\leq \frac{C}{a^q}\lambda^{(p-2)(q-1)}\left(\sum_{j=1}^\infty \int_{Q_j^{\lambda, a}}|\varphi|\d{x}\d{t}\right)^q \\
&\leq C\frac{\lambda^{(p-2)(q-1)}}{a^q}\|\varphi\|_{L^1(\R^{d+1})}^q\,,
\end{align*}
finishing the proof of Lemma~\ref{t.weak L1-estimate}.
\end{proof}

Thus, appealing to Lemma~\ref{t.weak L1-estimate} with $\varphi=\cE[u]\indc_{Q_{r_2}}$ and using~\eqref{e.pfgradhigherCpre}, we deduce that %
\begin{equation}\label{e.pfgradhigherC}
\lambda^p \sum_{i \in \mathrm{Ind} (\cB_{\tail})}|Q_i^\lambda| 
\leq C\eta^{-q} \lambda^{-\frac{2\theta}{d+2-\theta}} \left(\int_{Q_{r_2}} \cE[u]\d{x}\d{t}\right)^q
\end{equation}
for a constant $C(d,\theta)<\infty$, where we computed that $(p-2)(q-1)-pq=-p-\frac{2\theta}{d+2-\theta}$.

In the latter case $\theta \geq d+2$, we find the following lemma.
\begin{lemma}\label{t.empty}
Let $\theta \geq d+2$. For $\lambda \geq \lambda_0$ with $\lambda_0$ being as in~\eqref{e.pfgradhigher1}, we have $\cB_{\tail}=\varnothing$.
\end{lemma}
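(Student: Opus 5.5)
The plan is to show directly that every selected cylinder satisfies the reverse of the defining inequality of $\cB_{\tail}$. Since the exponent $\theta-d-2$ is nonnegative, the factor $r^{\theta-d-2}$ is bounded by the largest admissible radius, and the third summand in~\eqref{e.pfgradhigher1} is designed to absorb what remains.

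\emph{Step 1: reduction to $p<2$.} If $p\geq 2$, then $\theta=\frac{(1-s)p}{p-1}<\frac{p}{p-1}\leq 2<d+2$. Hence $\theta\geq d+2$ forces $\frac{2d}{d+2}<p<2$, in line with the Remark after Theorem~\ref{t.gradhigher}. So we may assume $p<2$, and then $1\wedge\lambda^{\frac{p-2}{2}}=\lambda^{\frac{p-2}{2}}$ because $\lambda\geq\lambda_0\geq 1$.

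\emph{Step 2: crude bound on a selected cylinder.} Fix $Q_i^\lambda=Q^\lambda_r(z_i)$ with $r=2\rho^\ast_{z_i}\leq \frac{2R_\lambda}{50}$. By Lemma~\ref{t.inclusion cylinder}, $Q_i^\lambda\subset Q_{r_2}$. Using $\cE[u]\geq 0$ and $|Q_i^\lambda|=2\omega_d\lambda^{2-p}r^{d+2}$, we get
\[
r^\theta\dashint_{Q_i^\lambda}\cE[u]\d{x}\d{t}\leq \frac{\lambda^{p-2}}{2\omega_d}\,r^{\theta-d-2}\int_{Q_{r_2}}\cE[u]\d{x}\d{t}.
\]
Since $\theta-d-2\geq 0$, we may replace $r$ by its maximal value. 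This gives
\[
r^{\theta-d-2}\leq R_\lambda^{\theta-d-2}\leq (r_2-r_1)^{\theta-d-2}\,\lambda^{\frac{(p-2)(\theta-d-2)}{2}},
\]
where the factors $10^{-10d(\theta-d-2)}\leq 1$ are discarded.

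\emph{Step 3: exponent bookkeeping.} Combining the two bounds, the left-hand side is at most
\[
\frac{1}{2\omega_d}\,\lambda^{-(2-p)\left(1+\frac{\theta-d-2}{2}\right)}(r_2-r_1)^{\theta-d-2}\int_{Q_{r_2}}\cE[u]\d{x}\d{t}.
\]
This quantity is $\leq\eta\lambda^p$ exactly when
\[
\lambda^{\frac{4+(2-p)(\theta-d-2)}{2}}\geq \frac{1}{2\omega_d}\,\frac{(r_2-r_1)^{\theta-d-2}}{\eta}\int_{Q_{r_2}}\cE[u]\d{x}\d{t}.
\]
The exponent $\frac{4+(2-p)(\theta-d-2)}{2}$ is positive. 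Moreover, $\lambda\geq\lambda_0$, and $\lambda_0$ dominates the third summand in~\eqref{e.pfgradhigher1}, whose power is precisely $\frac{2}{4+(2-p)(\theta-d-2)}$ when $p<2$. Therefore the inequality holds.

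The numerical constant $\frac{1}{2\omega_d}$ is $\leq 1$ for $d=2,3$, which are the only dimensions occurring here. Even if it were not, it could be absorbed by the unused factor $10^{-10d(\theta-d-2)}$, or by the factor $(2/50)^{\theta-d-2}$ coming from $r\leq 2R_\lambda/50$.

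\emph{Step 4: conclusion.} Hence $(2\rho^\ast_{z_i})^\theta\dashint_{Q_i^\lambda}\cE[u]\d{x}\d{t}\leq\eta\lambda^p$ for every $i$, so $\cB_{\tail}=\varnothing$.

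The only delicate point is the exponent arithmetic in Step 3, namely matching the power of $\lambda$ produced by the intrinsic scaling $R_\lambda\propto\lambda^{\frac{p-2}{2}}$ against the power in the definition of $\lambda_0$. The rest is elementary.
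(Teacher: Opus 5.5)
Your proposal is correct and follows the paper's proof essentially step for step. You rule out $p\geq 2$ via $\theta<2$, bound $r^\theta\dashint_{Q_i^\lambda}\cE[u]$ by $\lambda^{p-2}r^{\theta-d-2}\int_{Q_{r_2}}\cE[u]$, use $\theta-d-2\geq 0$ to replace $r$ by $R_\lambda\propto(r_2-r_1)\lambda^{\frac{p-2}{2}}$, and match the resulting power $\frac{4+(2-p)(\theta-d-2)}{2}$ of $\lambda$ against the third summand of $\lambda_0$; all of this is exactly what the paper does. One small caveat: your aside about absorbing $\frac{1}{2\omega_d}$ into $10^{-10d(\theta-d-2)}$ or $(2/50)^{\theta-d-2}$ fails in the borderline case $\theta=d+2$, where both factors equal $1$, but it is unnecessary because $\omega_d>1$ for $d=2,3$, as the paper also uses.
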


\begin{proof}[Proof of Lemma~\ref{t.empty}]
We start with the observation that
\[
\theta=\frac{(1-s)p}{p-1} \geq d+2 \quad \mbox{and} \quad p >\frac{2d}{d+2} \quad \Longrightarrow \quad  d=2,3\,,
\]
and therefore, we find that $\omega_2=\pi>1$ and $\omega_3=\nicefrac{4\pi}{3}>1$, obtaining
\[
\left(\omega_d (25\cdot 10^{10d})^{\theta-d-2}\right)^{-1} \leq 1\,.
\]
Observe further that, by $d \geq 2$, $\theta \geq d+2 \geq 4$ follows, whereas assuming $p \geq 2$, we find that
\[
\theta=\frac{(1-s)p}{p-1} \leq \frac{p}{p-1} \leq 2\,;
\]
a contradiction. Thus, it suffices to check the assertion for the case $\frac{2d}{d+2}<p<2$. A straightforward calculation implies that, in view of the definition of $R_\lambda$, 
\begin{align}\label{e.pfgradhigher12}
(2\rho_{z_i}^\ast)^{\theta} \dashint_{Q_i^\lambda}\cE[u]\d{x}\d{t}&=\frac{\lambda^{p-2}}{\omega_d}(2\rho_{z_i}^\ast)^{\theta-d-2}\int_{Q_i^\lambda}\cE[u]\d{x}\d{t} \notag\\
&\leq \frac{\lambda^{p-2}\bigl[(r_2-r_1)(1 \wedge \lambda^{\frac{p-2}{2}})\bigr]^{\theta-d-2}}{\omega_d (25\cdot 10^{10d})^{\theta-d-2}}\int_{Q_{r_2}}\cE[u]\d{x}\d{t}\notag\\
&\leq \lambda^p (r_2-r_1)^{\theta-d-2}\lambda^{-2+\frac{(p-2)(\theta-d-2)}{2}}\int_{Q_{r_2}}\cE[u]\d{x}\d{t} \notag\\
&\leq \lambda^p (r_2-r_1)^{\theta-d-2}\lambda_0^{-2+\frac{(p-2)(\theta-d-2)}{2}}\int_{Q_{r_2}}\cE[u]\d{x}\d{t}\,.
\end{align}
%
%
%
%
%
By~\eqref{e.pfgradhigher1} we find that
\begin{equation*}
\lambda_0^{-2+\frac{(p-2)(\theta-d-2)}{2}}(r_2-r_1)^{\theta-d-2}\int_{Q_{r_2}}\cE[u]\d{x}\d{t} \leq \eta\,,
\end{equation*}
and thus
\[
(2\rho_{z_i}^\ast)^{\theta} \dashint_{Q_i^\lambda}\cE[u]\d{x}\d{t} \leq \eta\lambda^p\,.
\]
Combining the above arguments implies the desired conclusion.
\end{proof}

\smallskip
\emph{Step 4: The conclusion.}  We present the final piece of the argument of the main result. When considering $\theta \geq d+2$, Lemma~\ref{t.empty} yields $\cB_{\tail}=\varnothing$. The argument with the $\cB_{\tail}$-term omitted and the contribution of $\lambda_0$ treated by Young's inequality with conjugate exponents $\left(1+\nicefrac{\eps p}{\gamma_p},\,\nicefrac{(\eps p+\gamma_p)}{\eps p}\right)$ immediately yields~\eqref{e.gradhigher2}. Thus, we present the details only for the case $\theta <d+2$. 

We henceforth take a parameter $\eta_0 \in (0,1)$, which must satisfy
\begin{equation}\label{e.pfgradhigher13}
\eta_0 =\min \left\{\frac{1}{2^{d+5}},\,\frac{1}{50C_{\RH}(3+4^{\frac{p^2}{p-1}}C_{\TAIL})} \right\}
\end{equation}
for our arguments to go through. Combining~\eqref{e.pfgradhigherA},~\eqref{e.pfgradhigherB} and~\eqref{e.pfgradhigherC} with~\eqref{e.pfgradhigher4}, we obtain that
\begin{align*}
\int_{\mathbf{\Xi}(r_1, \lambda)}&|\nabla u|^p\d{x}\d{t} \\
&\leq 5^{d+2} \left(\lambda^p \sum_{i \in \mathrm{Ind}(\cG)} |Q_i^\lambda| +\lambda^p \sum_{i \in \mathrm{Ind}(\cB_{\tail})} |Q_i^\lambda| +\lambda^p \sum_{i \in \mathrm{Ind}(\cB_{\mathrm{inhom}})} |Q_i^\lambda| \right) \\
&\leq C\lambda^{p-\kappa p}\int_{Q_{r_2} \cap \{|\nabla u| \geq C_\star \lambda\}} |\nabla u|^{\kappa p}\d{x}\d{t}+C\eta^{-1-\sigma}\lambda^{-p\sigma}\int_{Q_{r_2}}\bigl(|F|^p+1\bigr)^{1+\sigma}\d{x}\d{t}\\
&\quad \quad \quad+C\eta^{-q} \lambda^{-\frac{2\theta}{d+2-\theta}} \left(\int_{Q_{r_2}} \cE[u]\d{x}\d{t}\right)^q\,,
\end{align*}
where $C(\data)<\infty$ and $C_\star=\left(2^{\kappa+1}C_{\mathsf{RH}}^\kappa \right)^{\nicefrac{-1}{(\kappa p)}}$, since both $C_{\RH}$ and $\kappa$ depend on $\data$ as well. Thus, after taking $\eta=\eta_0$, rearranging this yields
\begin{align}\label{e.pfgradhigher14}
\int_{\mathbf{\Xi}(r_1, \lambda)}&|\nabla u|^p\d{x}\d{t} \notag \\
&\leq C\lambda^{p-\kappa p}\int_{Q_{r_2} \cap \{|\nabla u| \geq C_\star \lambda\}} |\nabla u|^{\kappa p}\d{x}\d{t}+C\lambda^{-p\sigma}\int_{Q_{r_2}}\bigl(|F|^p+1\bigr)^{1+\sigma}\d{x}\d{t} \notag\\
&\quad \quad \quad+C\lambda^{-\frac{2\theta}{d+2-\theta}} \left(\int_{Q_{r_2}} \cE[u]\d{x}\d{t}\right)^q\,.
\end{align}

\subsection{Proof of the gradient estimate}\label{Sect.5.4}

For $k>\lambda_0$ and $\eps>0$, we set
\[
|\nabla u|_k:=|\nabla u| \wedge k\,,
\]
and
\[
\bm{\Phi}_k(r):=\int_{Q_r} |\nabla u|^p |\nabla u|_k^{\eps p}\d{x}\d{t}\,.
\]
By $|\nabla u| \in L^p_{\mathrm{loc}}(\Omega_T)$, we have that $\bm{\Phi}_k(r)<\infty$. The Cavalieri principle says that
\begin{align*}
|\nabla u|_k^{\eps p}&=\eps p\int_0^{\lambda_0}\lambda^{\eps p-1} \indc_{\{|\nabla u| >\lambda\}}\d{\lambda}+\eps p\int_{\lambda_0}^k \lambda^{\eps p-1} \indc_{\{|\nabla u| >\lambda\}}\d{\lambda}\\
&\leq \lambda_0^{\eps p}+\eps p\int_{\lambda_0}^k \lambda^{\eps p-1} \indc_{\{|\nabla u| >\lambda\}}\d{\lambda}\,.
\end{align*}
Multiplying this last inequality by $|\nabla u|^p$ and integrating over $Q_{r_1}$, in view of Fubini's theorem,
\begin{align*}
\int_{Q_{r_1}}&|\nabla u|^p|\nabla u|_k^{\eps p}\d{x}\d{t} \\
&\leq \lambda_0^{\eps p}\int_{Q_{r_1}}|\nabla u|^p\d{x}\d{t}+\eps p\int_{Q_{r_1}} |\nabla u|^p\left(\int_{\lambda_0}^k \lambda^{\eps p-1} \indc_{\{|\nabla u| >\lambda\}}\d{\lambda}\right)\d{x}\d{t} \\
&=\lambda_0^{\eps p}\int_{Q_{r_1}}|\nabla u|^p\d{x}\d{t}+\eps p\int_{\lambda_0}^k \lambda^{\eps p-1}\left(\int_{\mathbf{\Xi}(r_1, \lambda)} |\nabla u|^p \d{x}\d{t}\right)\d{\lambda}\,.
\end{align*}
The combination of this and~\eqref{e.pfgradhigher14} yields
\begin{align}\label{e.pfgradhigher15}
\bm{\Phi}_k(r_1) &\leq \lambda_0^{\eps p}\int_{Q_{r_1}}|\nabla u|^p\d{x}\d{t}+\eps p\int_{\lambda_0}^k \lambda^{\eps p-1}\left(\int_{\mathbf{\Xi}(r_1, \lambda)} |\nabla u|^p \d{x}\d{t}\right)\d{\lambda} \notag\\
&\!\!\!\!\stackrel{\eqref{e.pfgradhigher14}}{\leq} \lambda_0^{\eps p}\int_{Q_{r_1}}|\nabla u|^p\d{x}\d{t}+\mathbf{I}+\mathbf{II}+\mathbf{III}\,,
\end{align}
where
\[
\left\{
\begin{aligned}
& \mathbf{I}:=C\eps p\int_{\lambda_0}^k \lambda^{\eps p-1+p-\kappa p} \left(\int_{\mathbf{\Xi}(r_2, C_\star \lambda)} |\nabla u|^{\kappa p} \d{x}\d{t}\right)\d{\lambda}\,, \\[2mm]
&\mathbf{II}:=C\eps p\int_{\lambda_0}^k \lambda^{\eps p-1-p\sigma} \left(\int_{Q_{r_2}}\bigl(|F|^p+1\bigr)^{1+\sigma}\d{x}\d{t}\right)\d{\lambda}\,,  \quad \mbox{and} 
\\[2mm]
&\mathbf{III}:=C\eps p\int_{\lambda_0}^k \lambda^{\eps p-1-\frac{2\theta}{d+2-\theta}} \left(\int_{Q_{r_2}}\cE[u]\d{x}\d{t}\right)^q\d{\lambda}\,.
\end{aligned}
\right.
\]
It is straightforward to check that, using Fubini's theorem,
\begin{align*}
\mathbf{I}&=C\eps \int_{\lambda_0}^k \int_{Q_{r_2}}\lambda^{\eps p-1+p-\kappa p} |\nabla u|^{\kappa p} \indc_{\{|\nabla u| >C_\star \lambda\}} \d{x}\d{t}\d{\lambda}\\
&=C\eps p\int_{Q_{r_2}} |\nabla u|^{\kappa p} \int_{\lambda_0}^k \lambda^{\eps p-1+p-\kappa p}  \indc_{\{|\nabla u| >C_\star \lambda\}} \d{\lambda}\d{x}\d{t} \\
&=C\eps p\int_{Q_{r_2}} |\nabla u|^{\kappa p} \left(\int_{\lambda_0}^{k \wedge |\nabla u|/C_\star} \lambda^{\eps p-1+p-\kappa p} \d{\lambda}\right)\d{x}\d{t} \\
&\leq \frac{C\eps p}{\eps p+p-\kappa p}\int_{Q_{r_2}}|\nabla u|^{\kappa p} \left(k\wedge \frac{|\nabla u|}{C_\star}\right)^{\eps p+p-\kappa p}\d{x}\d{t}\,.
\end{align*}
Since $C_\star \in (0,1)$, a simple analysis gives that
\[
|\nabla u|^{\kappa p} \left(k\wedge \frac{|\nabla u|}{C_\star}\right)^{\eps p+p-\kappa p} \leq \frac{1}{C_\star^{\eps p+p-\kappa p}}|\nabla u|^p|\nabla u|_k^{\eps p}\,,
\]
and thus, 
\[
\mathbf{I} \leq \underbrace{\frac{C\eps p}{(\eps p+p-\kappa p)C_\star^{\eps p+p-\kappa p}}}_{=:\Theta_{C_\star}}\underbrace{\int_{Q_{r_2}}|\nabla u|^p |\nabla u_k^{\eps p}\d{x}\d{t}}_{=\bm{\Phi}_k(r_2)}\,.
\]
$\Theta_{C_\star} \to 0$ as $\eps \searrow  0$; therefore, there exists $\eps_0(\data, C_\star)>0$ such that $\Theta_{C_\star} \leq 1/2$ for every $\eps \in (0,\eps_0)$. Hence
\[
\mathbf{I} \leq \frac{1}{2}\bm{\Phi}_k(r_2)\,.
\]
We now turn our attention to $\mathbf{II}$. We select $\eps>0$ to satisfy $\eps<\sigma$, thereby getting
\[
\mathbf{II}\leq \frac{C\eps p \lambda_0^{p(\eps -\sigma)}}{p(\sigma-\eps)}\int_{Q_{r_2}}\bigl(|F|^p+1\bigr)^{1+\sigma}\d{x}\d{t}\,.
\]
Similarly, if we enforce $\eps< \frac{2\theta}{p(d+2-\theta)}$ then
\[
\mathbf{III}\leq \frac{C\eps p \lambda_0^{\eps p-\frac{2\theta}{d+2-\theta}}}{\frac{2\theta}{d+2-\theta}-\eps p}\left(\int_{Q_{r_2}}\cE[u]\d{x}\d{t}\right)^q\,.
\]
Furthermore, we select $\eps <\min\left\{\sigma,\,\frac{2\theta}{p(d+2-\theta)}\right\}$ to satisfy
\[
\frac{\eps}{\sigma-\eps} \leq 1 \quad \mbox{and} \quad \frac{\eps p}{\frac{2\theta}{d+2-\theta}-\eps p} \leq 1\,,
\]
and hence 
\[
\mathbf{II}+\mathbf{III} \leq C\left(\int_{Q_{r_2}}\cE[u]\d{x}\d{t}\right)^q+C\int_{Q_{r_2}}\bigl(|F|^p+1\bigr)^{1+\sigma}\d{x}\d{t}
\]
because $\lambda_0>1$. Combining the previous estimates with~\eqref{e.pfgradhigher15} yields, upon choosing
$\eps \leq \min \bigl\{\eps_0, \nicefrac{\sigma}{2},\,\nicefrac{\theta}{p(d+2-\theta)}\bigr\}$, that
\begin{align}\label{e.pfgradhigher16}
\bm{\Phi}_k(r_1) \leq \frac{1}{2}\bm{\Phi}_k(r_2) &+ \lambda_0^{\eps p} \int_{Q_{r_1}}|\nabla u|^p\d{x}\d{t} \notag\\
&+C\left(\int_{Q_{r_2}}\cE[u]\d{x}\d{t}\right)^q+C\int_{Q_{r_2}}\bigl(|F|^p+1\bigr)^{1+\sigma}\d{x}\d{t}\,.
\end{align}
Recalling the choice of $\lambda_0$ in~\eqref{e.pfgradhigher1} with $\eta=\eta_0$ being as in~\eqref{e.pfgradhigher13} and the monotonicity of $r \mapsto \mathbf{E}_r$, we find that
\begin{equation*}
\lambda_0^{\eps p} \leq c\left[1+\frac{H(d)^{\nicefrac{\eps p}{\gamma_p}}}{(r_2-r_1)^{\nicefrac{(d+2)\eps p}{\gamma_p}}}\mathbf{E}_{R}^{\nicefrac{\eps p}{\gamma_p}}\right] \\
\end{equation*}
 where $c(\data)<\infty$, since three quantities $C_{\RH}$, $C_{\mathsf{Tail}}$ and $\kappa$ depend on $\data$ as well. Substituting this into~\eqref{e.pfgradhigher16} with rearrangements yields, 
 \begin{align*}
\bm{\Phi}_k(r_1) \leq \frac{1}{2}\bm{\Phi}_k(r_2) &+  \frac{C}{(r_2-r_1)^{\nicefrac{(d+2)\eps p}{\gamma_p}}}\mathbf{E}_{R}^{1+\nicefrac{\eps p}{\gamma_p}} \notag\\[2mm]
&+C\mathbf{E}_R+C\left(\int_{Q_{R}}\cE[u]\d{x}\d{t}\right)^q+C\int_{Q_{R}}\bigl(|F|^p+1\bigr)^{1+\sigma}\d{x}\d{t}\,.
\end{align*}
Appealing to Lemma~\ref{t.iteration}, we find that there exists $C_\ast(\data, \eps)<\infty$ such that
\begin{align*}
\bm{\Phi}_k(R/2) &\leq \frac{C_\ast}{R^{\nicefrac{(d+2)\eps p}{\gamma_p}}} \mathbf{E}_{R}^{1+\nicefrac{\eps p}{\gamma_p}} +C_\ast\mathbf{E}_R\\
& \quad \quad \quad +C_\ast\left(\int_{Q_{R}}\cE[u]\d{x}\d{t}\right)^q+C_\ast\int_{Q_{R}}\bigl(|F|^p+1\bigr)^{1+\sigma}\d{x}\d{t}\,.
\end{align*}
Observe that, by Fatou's lemma, 
\begin{align*}
\liminf_{k \to \infty} \bm{\Phi}_k(R/2) \geq \int_{Q_{R/2}}|\nabla u|^{p(1+\eps)}\d{x}\d{t}\,.
\end{align*}
Finally, this and rearranging imply that
\begin{align*}
\dashint_{Q_{R/2}}&|\nabla u|^{p(1+\eps)}\d{x}\d{t} \\
&\leq C_\ast\left(\dashint_{Q_R}\bigl(|\nabla u|^p+|F|^p\bigr)\d{x}\d{t} \right)^{1+\nicefrac{\eps p}{\gamma_p}} +C_\ast\dashint_{Q_R}\bigl(|\nabla u|^p+|F|^p\bigr)\d{x}\d{t}\\[2mm]
& \quad \quad +C_\ast\left(\dashint_{Q_{R}}\cE[u]\d{x}\d{t}\right)^q \underbrace{R^{(d+2)(q-1)}}_{\leq 1}\,+\,C_\ast\dashint_{Q_{R}}\bigl(|F|^p+1\bigr)^{1+\sigma}\d{x}\d{t}\\
&\leq C_\ast \left(\dashint_{Q_R}\bigl[|\nabla u|^p+(|F|^p+1)^{1+\sigma}\bigr]\d{x}\d{t}\right)^{1+\nicefrac{\eps p}{\gamma_p}}+C_\ast\left(\dashint_{Q_{R}}\cE[u]\d{x}\d{t}\right)^q\,.
\end{align*}
This completes the proof of Theorem~\ref{t.gradhigher}.

\makeatletter
\renewcommand{\thesection}{\Alph{section}.\arabic{subsection}}
\makeatother

\appendix

\makeatletter
\renewcommand{\thefigure}{\Alph{section}.\arabic{figure}}
\@addtoreset{figure}{section}
\makeatother

\subsubsection*{\bf Acknowledgments}
The author wishes to thank Wontae Kim for a useful discussion.


\end{document}